\definecolor{ForestGreen}{rgb}{0.1,0.6,0.05}
\definecolor{EgyptBlue}{rgb}{0.063,0.1,0.6}
\newtheorem{theorem}{Theorem}
\newtheorem{proposition}[theorem]{Proposition}
\newtheorem{lemma}[theorem]{Lemma}
\newtheorem{corollary}[theorem]{Corollary}
\theoremstyle{definition}
\newtheorem{remark}[theorem]{Remark}
\let\OLDthebibliography\thebibliography
\renewcommand\thebibliography[1]{
	\OLDthebibliography{#1}
	\setlength{\parskip}{1pt}
	\setlength{\itemsep}{1pt plus 0.3ex}
}
\numberwithin{equation}{section}
\numberwithin{theorem}{section}
\numberwithin{equation}{section}
\numberwithin{theorem}{section}
\newcommand{\intO}{\int_\Omega}
\newcommand{\al} {\alpha}
\newcommand{\be} {\beta}
\newcommand{\ga}{\gamma}
\newcommand{\de}{\delta}
\newcommand{\Om}{\Omega}
\title[Reverse Faber-Krahn and Szeg\H{o}-Weinberger type inequalities]{Reverse Faber-Krahn and Szeg\H{o}-Weinberger type inequalities for annular domains under Robin-Neumann boundary conditions}
\author[T.V.~Anoop]{T.V.~Anoop}
\author[V.~Bobkov]{Vladimir Bobkov}
\author[P.~Dr\'abek]{Pavel Dr\'abek}
\address[T.V.~Anoop]{\newline\indent
	Department of Mathematics,
	Indian Institute of Technology Madras, 
 \newline\indent
 Chennai 36, India
}
\email{anoop@iitm.ac.in}
\address[V.~Bobkov]{\newline\indent
	Institute of Mathematics, Ufa Federal Research Centre, RAS,
	\newline\indent 
	Chernyshevsky str. 112, 450008 Ufa, Russia
	\newline\indent 
	Ufa University of Science and Technology,
 \newline\indent
 Zaki Validi str. 32, 450076 Ufa, Russia
}
\email{bobkov@matem.anrb.ru}
\address[P.~Dr\'abek]{\newline\indent
	Department of Mathematics and NTIS, Faculty of Applied Sciences,
	\newline\indent 
	University of West Bohemia, 
 \newline\indent
 Univerzitn\'i 8, 301 00 Plze\v{n}, Czech Republic
}
\email{pdrabek@kma.zcu.cz}
\date{}
\subjclass[2010]{
	35P15,	%Estimates of eigenvalues in context of PDEs
	34L15.	%Eigenvalues, estimation of eigenvalues, upper and lower bounds of ordinary differential operators
}
\keywords{Szego-Weinberger inequality, mixed boundary conditions, Robin-Neumann boundary conditions, Zaremba eigenvalues, higher eigenvalues, symmetries, nonradiality, Payne conjecture}
\begin{document}
	\begin{abstract}
		Let $\tau_k(\Omega)$ be the $k$-th eigenvalue of the Laplace operator in a bounded domain $\Omega$ of the form $\Omega_{\text{out}} \setminus \overline{B_{\alpha}}$ under the Neumann boundary condition on $\partial \Omega_{\text{out}}$ and the Robin boundary condition with parameter $h \in (-\infty,+\infty]$ on the sphere $\partial B_\alpha$ of radius $\alpha>0$ centered at the origin, the limiting case $h=+\infty$ being understood as the Dirichlet boundary condition on $\partial B_\alpha$.
		In the case $h>0$, it is known that the first eigenvalue $\tau_1(\Omega)$ does not exceed $\tau_1(B_\beta \setminus \overline{B_\alpha})$, where $\beta>0$ is chosen such that $|\Omega| = |B_\beta \setminus \overline{B_\alpha}|$, which can be regarded as a reverse Faber-Krahn type inequality.
		We establish this result for any $h \in (-\infty,+\infty]$.
		Moreover, we provide related estimates for higher eigenvalues under additional geometric assumptions on $\Omega$, which can be seen as Szeg\H{o}-Weinberger type inequalities.
	A few counterexamples to the obtained inequalities for domains violating imposed geometric assumptions are given.
		As auxiliary information, we 
		investigate shapes of eigenfunctions associated with several eigenvalues $\tau_{i}(B_\beta \setminus \overline{B_\alpha})$
		and show that they are nonradial at least for all positive and all sufficiently negative $h$ when $i \in \{2,\ldots,N+2\}$. At the same time, we give numerical evidence that, in the planar case $N=2$, already \textit{second} eigenfunctions can be radial for some $h<0$. 
  The latter fact provides a simple counterexample to the Payne nodal line conjecture in the case of the mixed boundary conditions.
	\end{abstract} 
	\maketitle

	\begin{quote}	
	\setcounter{tocdepth}{1}
	\tableofcontents
	\addtocontents{toc}{\vspace*{0ex}}
	\end{quote}

	\section{Introduction}\label{sec:introduction}

Throughout this article, we denote by $B_r(x_0)$ an open ball of radius $r>0$ centered at $x_0 \in \mathbb{R}^N$. For brevity, we write $B_r$, 
when $x_0$ coincides with the origin of $\mathbb{R}^N$.

Let $\Omega \subset \mathbb{R}^N$, $N \geq 2$, be a set  characterized by the following assumption:
\begin{enumerate}[label={$\mathbf{(A)}$}]
	\item\label{assumption}
	$\Omega$ is a bounded Lipschitz domain of the form
	$\Omega = \Omega_{\text{out}} \setminus \overline{B_\alpha}$, where $\Omega_{\text{out}}$ is an open set, and 
	$B_\alpha$ with some $\alpha>0$ is compactly contained in  $\Omega_{\text{out}}$. 
\end{enumerate}
For the parameter $h \in \mathbb{R} \cup \{+\infty\}$, we consider the mixed Robin-Neumann eigenvalue problem
\begin{equation}\label{eq:D}
	\tag{$\mathcal{EP}$}
	\left\{
	\begin{aligned}
		-\Delta u &= \tau u &&\text{in}~\Omega,\\
		\frac{\partial u}{\partial \eta} &+ h u = 0 &&\text{on}~\partial B_\alpha,\\
		\frac{\partial u}{\partial \eta} &= 0 &&\text{on}~\partial \Omega_{\text{out}},
	\end{aligned}
	\right.
\end{equation}
where $\eta$ stands for the outward unit normal vector to $\partial \Omega$.
In the formal limiting case $h = +\infty$, we assume the zero Dirichlet boundary condition $u=0$ on $\partial B_\alpha$.

It follows from the general theory of compact self-adjoint operators (see, e.g., a condensed exposition in \cite[Section~4.2]{BFK} on a closely related problem) that the spectrum of  \eqref{eq:D} consists of a discrete sequence of eigenvalues $\{\tau_k(\Omega)\}$ accumulating at infinity:
$$
\tau_1(\Omega) < \tau_2(\Omega) \leq \tau_3(\Omega) \leq \ldots \leq \tau_k(\Omega) \leq \ldots, 
\quad 
\tau_k(\Omega) \to +\infty ~~\text{as}~~ k \to +\infty,
$$
each of which can be characterized via the classical minimax principles, e.g.,
\begin{equation}\label{eq:tauk}
	\tau_{k}(\Omega) 
	= 
	\min_{X\in\mathcal{X}_k} 
	\max_{u \in X\setminus \{0\}} \frac{\int_{\Omega} |\nabla u|^2 \, dx + h \int_{\partial B_\alpha} u^2 \,dS}{\int_{\Omega} u^2 \, dx},
	\quad
	k \in \mathbb{N},
\end{equation} 
where $\mathcal{X}_k$ is the collection of all $k$-dimensional subspaces of the Sobolev space
\begin{equation}\label{eq:sobolev}
\tilde{H}^1(\Omega)
=
\left\{
\begin{array}{ll}
	H^1(\Omega) &\text{ if } h<+\infty, \\  
	\{v\in H^1(\Omega):~ v=0 ~\text{on}~ \partial B_\alpha \} & \text{ if } h=+\infty,
\end{array}
\right.
\end{equation}
the equality $v=0$ being classically understood in the sense of traces, and we set 
$h \int_{\partial B_\alpha} u^2 \,dS = 0$ in the Dirichlet case $h=+\infty$.
In particular, the first eigenvalue is simple and can be defined as
\begin{equation}\label{eq:tau1}
	\tau_1(\Omega)
	=
	\min\left\{
	\frac{\int_{\Omega} |\nabla u|^2 \,dx + h \int_{\partial B_\alpha} u^2 \,dS}{\int_{\Omega} u^2 \,dx}:~
	u \in \tilde{H}^1(\Omega) \setminus\{0\}
	\right\}.
\end{equation}
By taking $u=\text{const}$ as a trial function in \eqref{eq:tau1} when $h<+\infty$, we see that $\tau_1(\Omega)<0$ for $h<0$, and $\tau_1(\Omega)=0$ for $h=0$ (i.e., in the case of the purely Neumann boundary conditions). Moreover, it is not hard to deduce from \eqref{eq:tau1} that $\tau_1(\Omega)>0$ for $h \in (0,+\infty]$.

Assume, for a moment, that the Robin boundary condition $\frac{\partial u}{\partial \eta} + h u = 0$ is imposed on the \textit{whole} boundary of a general, bounded, sufficiently smooth domain $\Omega$. 
The investigation of properties of the corresponding eigenvalues $\{\tau_k(\Omega)\}$ has a rich history with various fruitful results, many of which were obtained only recently.
Let us briefly comment on several results according to the particular choice of the boundary conditions, and we refer the reader to the comprehensive overviews in \cite{BFK,FL1,henrot} for further details of both mathematical and historical natures.
\begin{enumerate}[wide=12pt,itemsep=5pt]
	\item In the Dirichlet case $h=+\infty$, the fundamental estimate for $\tau_1(\Omega)$ is given by the Faber-Krahn inequality $\tau_1(\Omega) \geq \tau_1(B)$, where $B$ is an open ball of the same measure as $\Omega$.
	Its extension to $\tau_2(\Omega)$, which is sometimes referred to as the Hong-Krahn-Szeg\H{o} inequality (see, e.g., \cite{BF}), reads as $\tau_2(\Omega) \geq \tau_2(\tilde{B})$, where $\tilde{B}$ is a disjoint union of two open balls, each of radius $|\Omega|/2$.
	Notice that $\tau_2(\tilde{B}) = 2^{2/N}\tau_1(B)$.
	\item In the Neumann case $h=0$, the first eigenvalue $\tau_1(\Omega)$ is zero. Still, a counterpart of the Faber-Krahn inequality, known as the Szeg\H{o}-Weinberger inequality, holds for the first \textit{nonzero} eigenvalue and has the form $\tau_2(\Omega) \leq \tau_2(B)$.
	Moreover, \textsc{Girouard,  Nadirashvili, \&  Polterovich} \cite{GNP} (the case $N=2$) and \textsc{Bucur \& Henrot} \cite{BH} (the case $N\geq 2$) generalized this results to the second nonzero eigenvalue by proving that $\tau_3(\Omega) \leq \tau_3(\tilde{B})$.
	Under additional symmetry assumptions on $\Omega$, such as the symmetry of order $q$, inequalities of similar type were obtained for higher eigenvalues by \textsc{Hersch} \cite{hersh2} and \textsc{Ashbaugh \& Benguria} \cite{AB}, and we also refer to \textsc{Enache \& Philippin} \cite{EF1, EF2} for related results.
	\item In the Robin case $h \in (0,+\infty)$, 
	\textsc{Bossel} \cite{bossel} (the case $N=2$) and \textsc{Daners} \cite{daners} (the case $N\geq 2$) proved the validity of the original Faber-Krahn inequality $\tau_1(\Omega) \geq \tau_1(B)$. 
	As for the second eigenvalue, \textsc{Kennedy} shown in \cite{kennedy1} that the Hong-Krahn-Szeg\H{o} inequality $\tau_2(\Omega) \geq \tau_2(\tilde{B})$ is also satisfied.
	\item In the Robin case $h \in (-\infty,0)$, the situation is less clear.
	The existence of $h^*=h^*(|\Omega|) < 0$ such that $\tau_1(\Omega) \leq \tau_1(B)$ for any $h \in [h^*,0)$ was proved by \textsc{Freitas \& Krej\v{c}i\v{r}\'ik} \cite{FK} in the case $N=2$. 
 Moreover, it was shown that this inequality is not generally true for any $N \geq 2$, since it is reversed when $\Omega$ is a spherical shell and $|h|$ is sufficiently large.
	More recently, it was proved by \textsc{Freitas \& Laugesen} \cite{FL1} that $\tau_2(\Omega) \leq \tau_2(B)$ for any $h \in [-(N+1)/(RN), 0)$, where $R$ is the radius of the ball $B$.
The maximization of the third eigenvalue $\tau_3(\Omega)$ under an additional normalization assumption on $h$ was considered by \textsc{Girouard \& Laugesen} \cite{GL} in the case $N=2$.
\end{enumerate}

We also refer to \cite{AFKen,AFK,laug1} for some numeric and analytic results for higher eigenvalues of the Robin problem, which lead to several important conjectures.

The fundamental inequalities mentioned above are universal with respect to the domain and they are
not sensitive to a possible presence of ``holes'' in $\Omega$ as in the assumption \ref{assumption}.
However, for certain classes of domains, refined estimates for lower frequencies $\tau_k(\Omega)$ are known if the topology of $\Omega$ is taken into account. 
In this respect, we refer to \textsc{Hersch} \cite[Section~3]{hersh} and references therein for results in the purely Dirichlet case and to \textsc{Exner \& Lotoreichik} \cite{EL} 
for the purely Robin case.
In the purely Neumann case, it was proved by the present authors in \cite{ABD1} that, under additional symmetry assumptions on $\Omega$, which will be discussed below (similar to that used in \cite{AB,hersh2}), the  Szeg\H{o}-Weinberger type inequality
\begin{equation}\label{SW1high}
	\tau_i(\Omega_{\text{out}} \setminus \overline{B_\alpha}) \leq \tau_i(B_\beta \setminus \overline{B_\alpha})
\end{equation}
is valid for some higher indices $i \geq 2$, 
where the ball $B_\beta$ is centered at the origin and chosen in such a way that $|\Omega_{\text{out}} \setminus \overline{B_\alpha}| = |B_\beta \setminus \overline{B_\alpha}|$.
The inequality \eqref{SW1high} gives a better bound
than the classical Szeg\H{o}-Weinberger inequality $\tau_2(\Omega) \leq \tau_2(B)$, as it follows from \cite[Corollary~1.7]{ABD1}.

Returning back to the original problem \eqref{eq:D} with the mixed boundary conditions, several results in this direction are also known.
\textsc{Della Pietra \& Piscitelli} proved in \cite{DPP}, as a particular case of a more general result for the $p$-Laplacian and a convex inner ``hole'', that the reverse Faber-Krahn (or, equivalently, the reverse Bossel-Daners) type inequality
\begin{equation}\label{SW1}
	\tau_1(\Omega_{\text{out}} \setminus \overline{B_\alpha}) \leq \tau_1(B_\beta \setminus \overline{B_\alpha})
\end{equation}
is satisfied provided $h \in (0,+\infty)$.
In the Dirichlet-Neumann case $h = +\infty$, the same inequality \eqref{SW1} follows from more general results established by \textsc{Hersch} \cite{hersh0} (see also \cite{hersh}) for $N=2$ and by \textsc{Anoop \& Kumar} \cite{AK} for $N \geq 2$.

In our first result, we show that the reverse Faber-Krahn (Bossel-Daners) inequality \eqref{SW1} is satisfied also for negative values of the Robin parameter $h$, and we are able to characterize the equality case.
We notice that, unlike \cite{AK,DPP,hersh0,hersh}, we deal only with ``holes'' of spherical shape, and, unlike \cite{AK,DPP}, only with the Laplace operator. These restrictions are due to our proofs based on the approach of \textsc{Weinberger} \cite{weinberger}. 
We discuss it at the end of this section, and here we  mention that this method differs from those used in \cite{AK,DPP,hersh0}.
\begin{theorem}[Reverse Faber-Krahn (Bossel-Daners) inequality]\label{thm0}
	Let $h \in \mathbb{R} \cup \{+\infty\}$ and $h \neq 0$.
	Let $\Omega = \Omega_{\textnormal{out}} \setminus \overline{B_\alpha}$ satisfy the assumption \ref{assumption}, and let $\beta>\alpha$ be such that $|B_\beta \setminus \overline{B_\alpha}|=|\Omega_{\textnormal{out}} \setminus \overline{B_\alpha}|$.
	Then
	\begin{equation}\label{eq:RFK}
		\tau_{1}(\Omega_{\textnormal{out}} \setminus \overline{B_\alpha}) \leq \tau_1(B_\beta \setminus \overline{B_\alpha}).
	\end{equation}
	Moreover, equality holds in \eqref{eq:RFK} if and only if  $\Omega = B_\beta \setminus \overline{B_\alpha}$.
\end{theorem}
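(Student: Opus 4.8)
The plan is to bound $\tau_1(\Omega)$ from above by transplanting the radial ground state of the model annulus, in the spirit of Weinberger. Write $A := B_\beta \setminus \overline{B_\alpha}$ and let $\phi$ be a first eigenfunction of \eqref{eq:D} on $A$. Since $\tau_1(A)$ is simple and the problem on $A$ is invariant under the action of $O(N)$, the eigenfunction $\phi$ is radial; we take $\phi>0$ and write $\phi=\phi(r)$, $r=|x|$. In the radial variable it solves $-(r^{N-1}\phi')'=\tau_1(A)\,r^{N-1}\phi$ on $(\alpha,\beta)$ with $\phi'(\alpha)=h\phi(\alpha)$ (the outer normal to $\Omega$ on $\partial B_\alpha$ points towards the origin) and $\phi'(\beta)=0$; in the Dirichlet case $h=+\infty$ the first condition is replaced by $\phi(\alpha)=0$. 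The geometric engine of the proof is the monotonicity of $\phi$. Setting $w=r^{N-1}\phi'$, we have $w'=-\tau_1(A)\,r^{N-1}\phi$ and $w(\beta)=0$, so $w$ is monotone and its sign on $(\alpha,\beta)$ coincides with that of $\tau_1(A)$; equivalently, $\phi'$ has the sign of $\tau_1(A)$ there. As $\tau_1(A)>0$ for $h\in(0,+\infty]$ and $\tau_1(A)<0$ for $h<0$, the eigenfunction $\phi$ is strictly increasing when $h>0$ (or $h=+\infty$) and strictly decreasing when $h<0$.

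Next I would transplant $\phi$ onto $\Omega$. Set $\Phi(r)=\phi(r)$ for $\alpha\le r\le\beta$ and $\Phi(r)=\phi(\beta)$ for $r>\beta$; because $\phi'(\beta)=0$, the profile $\Phi$ is $C^1$, so $u(x):=\Phi(|x|)$ belongs to $\tilde{H}^1(\Omega)$ and is admissible in \eqref{eq:tau1} (and $u=0$ on $\partial B_\alpha$ when $h=+\infty$, since then $\phi(\alpha)=0$). Let $R[u]$ be the corresponding Rayleigh quotient, so that $\tau_1(\Omega)\le R[u]$. Writing $P=(\Phi')^2$ and $Q=\Phi^2$ and observing that the boundary term $h\int_{\partial B_\alpha}u^2\,dS=h\phi(\alpha)^2|\partial B_\alpha|$ is the same for $u$ on $\Omega$ and for $\phi$ on $A$, one uses that $\phi$ realizes the quotient $\tau_1(A)$ to eliminate this common term; the target estimate $R[u]\le\tau_1(A)$ then becomes equivalent to
\[
\int_\Omega P\,dx-\int_A P\,dx\;\le\;\tau_1(A)\left(\int_\Omega Q\,dx-\int_A Q\,dx\right).
\]

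To verify this I would invoke the equal-volume hypothesis $|\Omega|=|A|$. As both sets share the hole $B_\alpha$, one has $\Omega\setminus A\subset\{r\ge\beta\}$, $A\setminus\Omega\subset\{\alpha<r<\beta\}$, and $|\Omega\setminus A|=|A\setminus\Omega|=:V$. On $\Omega\setminus A$ the transplant is flat, so $P=0$ and $Q=\phi(\beta)^2$, while on $A\setminus\Omega$ we have $P=(\phi')^2$ and $Q=\phi^2$. Consequently the left-hand side above equals $-\int_{A\setminus\Omega}(\phi')^2\,dx\le 0$, whereas $\int_\Omega Q-\int_A Q=\phi(\beta)^2 V-\int_{A\setminus\Omega}\phi^2\,dx$. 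By the monotonicity from the first step, $\phi\le\phi(\beta)$ on $(\alpha,\beta)$ when $h>0$, making this quantity nonnegative, and $\phi\ge\phi(\beta)$ when $h<0$, making it nonpositive. Since $\sgn\tau_1(A)=\sgn h$, in both regimes the right-hand side $\tau_1(A)\left(\int_\Omega Q-\int_A Q\right)$ is nonnegative and hence dominates the nonpositive left-hand side; this proves \eqref{eq:RFK}. For equality, the same chain forces $\int_{A\setminus\Omega}(\phi')^2\,dx=0$, and since $\phi'\ne0$ throughout $(\alpha,\beta)$ this yields $V=0$, i.e.\ $\Omega=A$.

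I expect the main obstacle to be precisely the sign bookkeeping for $h<0$: here the ground state is \emph{decreasing} and $\tau_1(A)$ is negative, so the naive monotonicity intuition fails, and the argument succeeds only because these two sign reversals cancel. The secondary technical points are the radiality and strict monotonicity of $\phi$ (needed both for the main estimate and for the equality case) and the admissibility of the transplanted function uniformly across the three regimes $h>0$, $h=+\infty$, and $h<0$.
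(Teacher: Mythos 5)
Your proposal is correct and follows essentially the same route as the paper: it is the Weinberger-style argument of Proposition~\ref{prop:bound} specialized to $l=0$ — transplant the radial ground state by constant extension past $\beta$, use the equal-volume decomposition $|\Omega\setminus A|=|A\setminus\Omega|$, and close the estimate with the monotonicity of $\phi$ combined with the sign of $\tau_1(A)$, exactly as in the paper's Lemma~\ref{lem:v}\ref{eq:long:1} and the proof of Theorem~\ref{thm0}. The only elided point is the final step ``$V=0$, i.e.\ $\Omega=A$'': passing from a measure-zero symmetric difference to set equality uses that $\Omega$ is Lipschitz, as the paper notes at the end of the proof of Proposition~\ref{prop:bound}.
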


We have the following simple corollary of Theorem~\ref{thm0} saying that concentric spherical shells have the largest first eigenvalue among generally eccentric spherical shells of the same measure.
As in the case of Theorem~\ref{thm0}, this result is known for $h=+\infty$, see \cite[Theorem~1.2]{AK}, and we refer to \cite{AK} for a historical overview of related results in this direction.
\begin{corollary}
	Let $h \in \mathbb{R} \cup \{+\infty\}$, $h \neq 0$, and $0<\alpha<\beta$.
	Let $x_0 \in \mathbb{R}^N$ be such that $B_\al(x_0)$ is compactly contained in $B_\beta$.
	Then
	\begin{equation}\label{eq:RFKrad}
		\tau_{1}(B_\beta \setminus \overline{B_\alpha(x_0)}) \leq \tau_1(B_\beta \setminus \overline{B_\alpha}).
	\end{equation}
	Moreover, equality holds in \eqref{eq:RFKrad} if and only if $x_0$ is the origin of $\mathbb{R}^N$.
\end{corollary}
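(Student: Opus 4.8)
The plan is to reduce the statement to Theorem~\ref{thm0} by a translation that moves the inner ball to the origin. First I would set $y = x - x_0$ and consider the translated domain $\tilde\Omega := (B_\beta \setminus \overline{B_\alpha(x_0)}) - x_0 = B_\beta(-x_0) \setminus \overline{B_\alpha}$. Since the eigenvalue problem \eqref{eq:D} is invariant under rigid motions (the Laplacian, the normal derivative, and the inner sphere $\partial B_\alpha$ all transform covariantly under a translation), the spectra of the two domains coincide and in particular $\tau_1(B_\beta \setminus \overline{B_\alpha(x_0)}) = \tau_1(\tilde\Omega)$.

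Next I would check that $\tilde\Omega$ fits the framework of Theorem~\ref{thm0}. Writing $\Omega_{\text{out}} := B_\beta(-x_0)$, the set $\tilde\Omega = \Omega_{\text{out}} \setminus \overline{B_\alpha}$ is a bounded (in fact smooth, hence Lipschitz) domain, and because $B_\alpha(x_0)$ is compactly contained in $B_\beta$ by hypothesis, its translate $B_\alpha$ is compactly contained in $\Omega_{\text{out}}$; thus assumption \ref{assumption} holds. Crucially, Lebesgue measure is preserved by translation, so $|\tilde\Omega| = |B_\beta| - |B_\alpha| = |B_\beta \setminus \overline{B_\alpha}|$, which means that the comparison radius produced by Theorem~\ref{thm0} for $\tilde\Omega$ is exactly $\beta$, and the comparison shell on the right-hand side is the concentric $B_\beta \setminus \overline{B_\alpha}$.

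Applying Theorem~\ref{thm0} to $\tilde\Omega$ then yields $\tau_1(\tilde\Omega) \le \tau_1(B_\beta \setminus \overline{B_\alpha})$, and combining this with the translation identity gives the desired inequality \eqref{eq:RFKrad}. For the equality statement I would invoke the rigidity part of Theorem~\ref{thm0}: equality forces $\tilde\Omega = B_\beta \setminus \overline{B_\alpha}$, i.e. $B_\beta(-x_0) = B_\beta$ as sets, which happens precisely when $-x_0$ is the origin, that is, $x_0 = 0$.

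I do not anticipate a genuine obstacle here, as the corollary is essentially Theorem~\ref{thm0} read through the lens of translation invariance. The only points requiring care are the verification that assumption \ref{assumption} survives the translation (connectedness of $\tilde\Omega$ and the compact containment of $B_\alpha$ in $\Omega_{\text{out}}$) and the elementary bookkeeping that the volume constraint is unchanged, so that the comparison domain remains $B_\beta \setminus \overline{B_\alpha}$ rather than a shell of a different outer radius.
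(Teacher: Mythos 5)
Your proof is correct and is exactly the argument the paper intends: the corollary is stated as a ``simple corollary'' of Theorem~\ref{thm0}, obtained by translating the eccentric shell so the inner ball is centered at the origin (using the translation invariance of the spectrum, which the paper itself notes), verifying assumption \ref{assumption} and the volume constraint, and then invoking Theorem~\ref{thm0} together with its rigidity statement for the equality case.
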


Our second aim is to develop the results obtained in \cite{ABD1} on the Szeg\H{o}-Weinberger type inequalities \eqref{SW1high} by covering the case of nonzero Robin parameter $h$ and also dealing with eigenvalues of even higher indices than in \cite{ABD1}
assuming either sufficient negativity of $h$ or sufficient ``thinness'' of $\Omega$.
First, let us define two simple symmetry classes of domains used in \cite{ABD1,AB,hersh2}.
\begin{enumerate}[wide=12pt,itemsep=5pt]
	\item A domain $\Omega$ is called \textit{symmetric of order $q \in \mathbb{N}$}
	if $R^{2\pi/q}_{i,j} \Omega = \Omega$ for any $1 \leq i < j \leq N$, where $R^{2\pi/q}_{i,j}$ denotes the rotation (in the anticlockwise direction with respect to the origin) by angle $2\pi/q$ in the coordinate plane $(x_i,x_j)$. 
	\item A domain $\Omega$ is called \textit{centrally symmetric} whenever $x \in \Omega$ if and only if $-x \in \Omega$.
\end{enumerate}

Since the spectrum of \eqref{eq:D} does not depend on isometries and translations of $\Omega$, these symmetry classes are defined up to corresponding transformations of $\Omega$. 
The inclusions between these classes and some of their properties are discussed in detail in \cite[Section~5]{ABD1}, see also \cite[Remark~1.4]{ABD1}.
In particular, in the planar case $N=2$, the symmetry of order $2$ is equivalent to the central symmetry; 
in the case of even dimensions $N = 4, 6, \ldots$, the symmetry of order
$2$ always implies the central symmetry, but not vice versa; in the case of odd dimensions $N=3,5,\ldots$, these two symmetry classes are independent.
Moreover, in the case $N=2$, nonradial domains with the symmetry of order $q$ exist for any $q \in \mathbb{N}$. In contrast, in the higher-dimensional case $N \geq 3$, such domains exist only for $q=1,2,4$.

Next, we formulate our main result in the planar case $N=2$, see Figure~\ref{fig:thm1} for a schematic graph of assumptions on $\alpha$ and $h$, where $\beta$ is assumed to be fixed.
\begin{theorem}\label{thm1-ext}
    Let $N=2$, $\beta>0$, and $\kappa \geq 1$ be an integer.
    Then for any $\alpha \in (0,\beta)$ there exists $\bar{h}_\kappa \in \mathbb{R} \cup \{+\infty\}$ such that for any $h \in (-\infty,\bar{h}_\kappa]$ and any $\Omega = \Omega_{\textnormal{out}} \setminus \overline{B_\alpha}$ satisfying the assumption \ref{assumption}, symmetric of order $2^{\kappa}$, and such that $|B_\beta \setminus \overline{B_\alpha}|=|\Omega_{\textnormal{out}} \setminus \overline{B_\alpha}|$,  
    we have
    \begin{equation}\label{eq:SWhigh}
		\tau_{i}(\Omega_{\textnormal{out}} \setminus \overline{B_\alpha}) \leq \tau_{i}(B_\beta \setminus \overline{B_\alpha})
		\quad \text{for} \quad
	i = 2,3,\dots,2^\kappa.
  \end{equation}
 Moreover, there exists $\alpha_\kappa \in (0,\beta)$, which depends only on $\beta$ and $\kappa$, such that $\bar{h}_\kappa = +\infty$ for any $\alpha \in [\alpha_\kappa,\beta)$.
 In addition, 
 for any $\alpha \in (0,\alpha_2)$ there exists $h_2 < 0$ such that the following assertions hold:
 \begin{enumerate}[label={\rm(\roman*)}]
     \item If $\kappa=1,2$, then \eqref{eq:SWhigh} holds for any $h \in [h_2,+\infty]$.
     \item If $\kappa=3$, then \eqref{eq:SWhigh} with $i \leq 5$ holds for any $h \in [h_2,+\infty]$.
 \end{enumerate}

\noindent    Furthermore, equality holds in \eqref{eq:SWhigh} if and only if $\Omega = B_\beta \setminus \overline{B_\alpha}$.
\end{theorem}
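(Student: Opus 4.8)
The plan is to prove \eqref{eq:SWhigh} by Weinberger's method \cite{weinberger}, in the annular form developed in \cite{ABD1} and combined with the rotational-symmetry device of \cite{hersh2,AB}. Write $A := B_\beta \setminus \overline{B_\alpha}$ for the model shell. Separating variables, the eigenfunctions of $A$ are $g_{m,k}(r)\,\Theta_m(\theta)$, where $\Theta_m \in \{\cos(m\theta),\sin(m\theta)\}$ is an angular mode of order $m \ge 0$ and $g_{m,k}$ solves the radial ODE with the Neumann condition at $r=\beta$ and the Robin condition (parameter $h$) at $r=\alpha$; let $\lambda_m := \mu_{m,1}$ be the bottom radial eigenvalue in the sector $m$. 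The first task is to show that, in the admissible range of $(\alpha,h)$, the low spectrum of $A$ consists exactly of the fundamental sectors, ordered $\lambda_0 \le \lambda_1 \le \cdots$, so that the order-$m$ pair occupies the indices $2m,2m+1$ and, in particular, $\tau_{2^\kappa}(A)=\lambda_{2^{\kappa-1}}$. The upper index $2^\kappa$ is then dictated by symmetry: an order-$2^\kappa$ rotation acts on the sectors $m=1,\dots,2^{\kappa-1}-1$ by pairwise distinct characters and collapses the top sector $m=2^{\kappa-1}$ to the single sign character, which is exactly why only one of the two modes of that sector is controllable and the indices reach $2^\kappa$ but not $2^\kappa+1$.

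The hard part, and the source of the thresholds $\bar h_\kappa$ and $\alpha_\kappa$, is precisely this ordering claim. I would study the radial ODE and track how $\lambda_m$ and the non-fundamental eigenvalues $\mu_{m,k}$ ($k\ge 2$) depend on $h$ and on the thickness $\beta-\alpha$. As $h\to-\infty$ or as $\alpha\uparrow\beta$, I expect to prove that $\lambda_0\le\lambda_1\le\cdots\le\lambda_{2^{\kappa-1}}$ and that every non-fundamental mode $\mu_{m,k}$, $k\ge 2$, stays strictly above $\lambda_{2^{\kappa-1}}$; the fact that the good ordering $\lambda_1<\mu_{0,2}$ can fail for some $h<0$ (the radial second eigenfunction announced in the abstract) is what forbids an unrestricted range and forces a threshold. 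In parallel I must secure the two analytic ingredients of Weinberger's rearrangement on $[\alpha,\beta]$: monotonicity (and a definite sign) of the fundamental radial factors $g_{m,1}$, and the opposite monotonicity of the associated angular-energy density, so that extending $g_{m,1}$ by its value at $r=\beta$ for $r>\beta$ does not raise the Rayleigh quotient.

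Granting these, the comparison itself is clean. For each $m=1,\dots,2^{\kappa-1}$ I take trial functions $\bar g_{m,1}(|x|)\,\Theta_m(\theta)$ on $\Omega$, where $\bar g_{m,1}$ is the extended radial factor, together with a radial filler $\bar g_{0,1}(|x|)$ to occupy the lowest slot; their span has dimension $2^\kappa$. Because $B_\alpha$ is the common hole of $\Omega$ and $A$, the Robin integral $h\int_{\partial B_\alpha}(\cdot)^2\,dS$ is \emph{identical} for each trial function and for the corresponding eigenfunction of $A$, so only the region outside $B_\alpha$ contributes any discrepancy, and there the extension plus the monotonicity above, combined with the equimeasurability $|\Omega|=|A|$, bounds each individual Rayleigh quotient by the matching $\lambda_m$. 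The order-$2^\kappa$ symmetry of $\Omega$ makes both the Dirichlet energy and the $L^2$ inner product invariant under the $\mathbb{Z}_{2^\kappa}$-action, hence block-diagonal across the isotypic decomposition of $\tilde H^1(\Omega)$; trial functions from distinct sectors therefore decouple, the Rayleigh quotient on any sub-span is the maximum of the individual ones, and --- since the origin is pinned at the centre of $B_\alpha$ --- no center-of-mass (Brouwer-degree) step is needed. Feeding the $i$-dimensional span with the $i$ smallest $\lambda$-values into the minimax \eqref{eq:tauk} yields $\tau_i(\Omega)\le\tau_i(A)$ for $i=2,\dots,2^\kappa$.

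It remains to address equality and the refinements. Equality in \eqref{eq:SWhigh} propagates back to the single rearrangement inequality, forcing the trial profile to be non-constant exactly on $\alpha<|x|<\beta$ and the part of $\Omega$ outside $B_\beta$ to have zero measure; with assumption \ref{assumption} this gives $\Omega=A$, exactly as in Theorem~\ref{thm0}. The statement that $\bar h_\kappa=+\infty$ once $\alpha\ge\alpha_\kappa$, and the explicit low-index assertions (i)--(ii) valid on the whole range $[h_2,+\infty]$, follow by making the ordering analysis quantitative: for thin enough shells the fundamental sectors separate from the non-fundamental ones uniformly in $h$ (so every $h$ is admissible), whereas for thicker shells this separation survives only below a finite $h$, or only for the first few indices, which is what distinguishes the cases $\kappa=1,2$ from $\kappa=3$. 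I expect the eigenvalue-tracking of the second paragraph to be the genuine crux; the trial-function comparison and the equality discussion are, by contrast, faithful adaptations of \cite{ABD1,weinberger}.
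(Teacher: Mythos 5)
Your overall architecture coincides with the paper's: Weinberger-type trial functions built from the shell's first radial eigenfunctions extended by constants, the order-$2^\kappa$ symmetry used to decouple angular sectors, the observation that the top sector $m=2^{\kappa-1}$ contributes only one usable mode (so the indices stop at $2^\kappa$), and the reduction of the thresholds $\bar{h}_\kappa$, $\alpha_\kappa$, $h_2$ to an ordering statement for the shell eigenvalues. Your variations are legitimate and arguably cleaner: using a matching radial profile $\bar g_{m,1}$ in each sector plus a radial ``filler'' with the original minimax \eqref{eq:tauk}, instead of the paper's single profile $G$ (attached to the top sector) together with orthogonality to $\phi_1$ and the modified characterization \eqref{eq:taukdef}; and replacing the paper's explicit trigonometric computations (Steps 2, 4, 5 of its proof) by invariance of the two quadratic forms under the $\mathbb{Z}_{2^\kappa}$-action. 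Indeed, for $1\le m\le 2^{\kappa-1}-1$ the rotation acts on the two-dimensional span of $\bar g_{m,1}\cos(m\theta),\ \bar g_{m,1}\sin(m\theta)$ by an angle $\neq 0,\pi$, so any invariant symmetric form on it is a multiple of the identity; this yields not only the cross-orthogonality but also the equal-mass identities $\int_\Omega \bar g_{m,1}^2\cos^2(m\theta)\,dx=\int_\Omega \bar g_{m,1}^2\sin^2(m\theta)\,dx$, which you tacitly need in order to turn each mode's Rayleigh quotient into the radialized quotient that Proposition~\ref{prop:bound} controls.

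There are, however, two genuine gaps. First and most serious: everything that produces the restrictions on $(h,\alpha)$ --- the chain $\tau_{0,1}<\tau_{1,1}<\cdots<\tau_{2^{\kappa-1},1}<\tau_{0,2}$, the existence of $\bar{h}_\kappa$ and $\alpha_\kappa$, the value $h_2$, and the distinction between $\kappa=1,2$ and $\kappa=3$ in assertions (i)--(ii) --- is only announced in your second paragraph (``I would study\dots'', ``I expect to prove\dots''), not proved. In the paper this is Lemma~\ref{lem:n+2}, whose proof is nontrivial: it needs the derivative formula $d\tau_{l,j}/dh=\alpha^{N-1}v^2(\alpha)$ (Lemma~\ref{lem:tau-derivative}), the pointwise comparison $|u(\alpha)|<|v(\alpha)|$ between the competing eigenfunctions (Lemma~\ref{lem:compareuv}, via a Sonin--P\'olya argument), the resulting one-way monotonicity in $h$ (Lemma~\ref{lem:tau}), and thin-shell asymptotics as $\alpha\to\beta$. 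The same applies to the monotonicity and sign analysis of the radial factors for $h<0$ (Lemma~\ref{lem:v}, with its three regimes), on which the rearrangement bound of Proposition~\ref{prop:bound} rests; you list these as ``ingredients to secure'' but do not establish them, and for negative $h$ they are exactly where the difficulty (the possible radial second eigenfunction) lives. Second, smaller but a missing idea rather than a missing computation: you assert that ``only one of the two modes of the top sector is controllable'' without any argument that \emph{at least one} of $G\cos(2^{\kappa-1}\theta)$, $G\sin(2^{\kappa-1}\theta)$ has Rayleigh quotient bounded by the radialized quotient --- for $m=2^{\kappa-1}$ the rotation acts by $-\mathrm{Id}$, so invariance gives you nothing, and the individual quotients need not be comparable to $\tau_{2^{\kappa-1},1}$. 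The paper's Step~6 supplies the selection by an averaging (mediant) contradiction: the sum of the two numerators divided by the sum of the two denominators equals the radialized quotient, hence the smaller of the two quotients does not exceed it. Without this step your trial space cannot be completed to the required dimension; Remark~\ref{rem:thm} points out that precisely this subtlety was handled imprecisely in earlier work, so it cannot be waved through.
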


Now we provide a result for higher dimensions, i.e., $N \geq 3$.

\begin{theorem}\label{thm1}
    Let $N \geq 3$ and $\beta>0$. 
    Then for any $\alpha \in (0,\beta)$ 
    there exist 
    $h_2 < 0$ and $\bar{h}_2 \in \mathbb{R} \cup \{+\infty\}$ such that  
    for any $h \in (-\infty,\bar{h}_2] \cup [h_2,+\infty]$
    and
    any $\Omega = \Omega_{\textnormal{out}} \setminus \overline{B_\alpha}$ satisfying the assumption \ref{assumption} and such that $|B_\beta \setminus \overline{B_\alpha}|=|\Omega_{\textnormal{out}} \setminus \overline{B_\alpha}|$, 
    the following assertions hold:
	\begin{enumerate}[label={\rm(\arabic*)}]
		\item\label{thm1:1} 
		If $\Omega$ is symmetric of order $2$ or centrally symmetric, then 
		\begin{equation}\label{eq:SWND}
			\tau_{2}(\Omega_{\textnormal{out}} \setminus \overline{B_\alpha}) \leq \tau_2(B_\beta \setminus \overline{B_\alpha}).
		\end{equation}
		\item\label{thm1:2} 
		If $\Omega$ is symmetric of order $4$, then
		\begin{equation}\label{eq:SWNDx}
			\tau_{i}(\Omega_{\textnormal{out}} \setminus \overline{B_\al}) \leq \tau_i(B_\beta \setminus \overline{B_\alpha})=\tau_2(B_\beta \setminus \overline{B_\alpha}) \quad \text{for} \quad  i=2,3,\ldots, N+1,
		\end{equation}
and	\begin{equation}\label{eq:SWNDy}
			\tau_{N+2}(\Omega_{\textnormal{out}} \setminus \overline{B_\al}) \leq \tau_{N+2}(B_\beta \setminus \overline{B_\alpha}).
		\end{equation}
	\end{enumerate}
Moreover, there exists $\alpha_2 \in (0,\beta)$, which depends only on $N$ and $\beta$, such that  $\bar{h}_2 = +\infty$ for any $\alpha \in [\alpha_2,\beta)$. 

Furthermore, equality holds in \eqref{eq:SWND}, \eqref{eq:SWNDx}, \eqref{eq:SWNDy} 
 %\eqref{eq:SWNDz} 
	if and only if $\Omega = B_\beta \setminus \overline{B_\alpha}$.
\end{theorem}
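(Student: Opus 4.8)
The plan is to adapt Weinberger's method to the annular geometry, building trial functions from the explicit separated eigenfunctions of the model shell $B_\beta\setminus\overline{B_\alpha}$. By separation of variables every eigenfunction of the shell is of the form $G(r)Y(\theta)$ with $Y$ a spherical harmonic; the degree-one harmonics $x_i/r$ ($i=1,\dots,N$) share one radial profile $g$ and produce the block $\tau_2=\cdots=\tau_{N+1}$, while the degree-two harmonics share a profile $H$ and produce (in the relevant parameter range) the eigenvalue $\tau_{N+2}$. First I would record the radial equations and the boundary conditions $g'(\alpha)=hg(\alpha)$, $g'(\beta)=0$ (and $g(\alpha)=H(\alpha)=0$ in the Dirichlet case $h=+\infty$), and extend each profile to $\Omega$ by its value at $r=\beta$; the Neumann condition at $\beta$ makes the extension $C^1$, so that $\phi_i:=g(|x|)\,x_i/|x|$ and $\psi:=H(|x|)\,x_1x_2/|x|^2$ lie in $\tilde H^1(\Omega)$. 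Throughout, $R(\cdot)$ denotes the Rayleigh quotient in \eqref{eq:tau1} and $a(\cdot,\cdot)$ its symmetric bilinear form.

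Next I would extract the orthogonalities required to feed these functions into the min-max \eqref{eq:tauk}. Since the first eigenfunction $u_1$ is simple and positive, invariance of $\Omega$ under the rotation $R^\pi_{i,j}$ (symmetry of order $2$, implied both by central symmetry and by order $4$) forces $u_1\circ R^\pi_{i,j}=u_1$; as $\phi_i$ is odd under $R^\pi_{i,j}$, this gives $\int_\Omega \phi_i u_1\,dx=0$. The analytic core is the Weinberger comparison: summing the energies over $i$ and using $\sum_i(x_i/r)^2=1$ and $\sum_i|\nabla_\theta(x_i/r)|^2=N-1$ reduces the claim that $\sum_i\bigl[a(\phi_i,\phi_i)-\tau_2\,\|\phi_i\|_{L^2(\Omega)}^2\bigr]\le0$ (with $\tau_2$ abbreviating $\tau_2(B_\beta\setminus\overline{B_\alpha})$) to the one-dimensional inequality $\int_\Omega\mathcal F(|x|)\,dx\le\int_{B_\beta\setminus\overline{B_\alpha}}\mathcal F(|x|)\,dx$, where $\mathcal F(r)=g'(r)^2+\tfrac{N-1}{r^2}g(r)^2-\tau_2 g(r)^2$; the boundary terms on $\partial B_\alpha$ coincide for $\Omega$ and the shell, and the right-hand side vanishes because $g$ is the exact profile. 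Since $|\Omega|=|B_\beta\setminus\overline{B_\alpha}|$, the difference equals $\int_{\Omega\setminus B_\beta}\mathcal F-\int_{B_\beta\setminus\Omega}\mathcal F$ over two equimeasurable sets lying respectively outside and inside $B_\beta$, so monotonicity of $\mathcal F$ (decreasing in $r$) makes it nonpositive. For part \ref{thm1:1} this already yields one index $i$ with $R(\phi_i)\le\tau_2$, and the orthogonal pair $\{u_1,\phi_i\}$ certifies \eqref{eq:SWND}. For \eqref{eq:SWNDx} I would upgrade to order-$4$ symmetry: then $\int_\Omega\phi_i\phi_j=a(\phi_i,\phi_j)=0$ for $i\ne j$ (as $x_ix_j$ is odd under $R^{\pi/2}_{i,j}$) and all $\phi_i$ are carried onto one another by the symmetry group, so each one individually satisfies $R(\phi_i)\le\tau_2$; the orthogonal space $\mathrm{span}\{u_1,\phi_1,\dots,\phi_N\}$ then gives $\tau_i(\Omega)\le\tau_2$ for $i=2,\dots,N+1$.

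For the remaining bound \eqref{eq:SWNDy} I would adjoin the degree-two function $\psi$. The decisive point is that, under order-$4$ symmetry, $\psi=H(|x|)x_1x_2/|x|^2$ is odd under $R^{\pi/2}_{1,2}$ and hence $\int_\Omega\psi u_1=0$, while a short parity bookkeeping (using $R^\pi_{1,2}$ for $i\in\{1,2\}$ and $R^{\pi/2}_{1,i}$ for $i\ge3$) gives $\int_\Omega\psi\phi_i=a(\psi,\phi_i)=0$ for all $i$; thus no projection against the unknown $u_1$ is needed. Consequently $\mathrm{span}\{u_1,\phi_1,\dots,\phi_N,\psi\}$ is $(N+2)$-dimensional and fully orthogonal in both $L^2(\Omega)$ and the energy form, so its maximal Rayleigh quotient equals $\max\{\tau_1(\Omega),\tau_2,R(\psi)\}$. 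A Weinberger comparison for the degree-two profile, summed over the full degree-two block so as to radialize the integrand (the relevant function being $\mathcal F_2(r)=H'(r)^2+\tfrac{2N}{r^2}H(r)^2-\tau_{N+2}H(r)^2$ with $\tau_{N+2}=\tau_{N+2}(B_\beta\setminus\overline{B_\alpha})$), yields $R(\psi)\le\tau_{N+2}$; combined with Theorem~\ref{thm0} and the ordering $\tau_1(\Omega)\le\tau_2\le\tau_{N+2}$ this proves \eqref{eq:SWNDy}.

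Finally, the restrictions on $h$ and $\alpha$ enter through two ODE-level facts about the Bessel-type profiles that I expect to be the main obstacle. The first is the eigenvalue ordering on $B_\beta\setminus\overline{B_\alpha}$: that the degree-one block occupies exactly the indices $2,\dots,N+1$ and that the bottom of the degree-two block equals $\tau_{N+2}$, lying below the second radial eigenvalue. The second is the monotonicity of $\mathcal F$ and $\mathcal F_2$ required in the equimeasurable step. Both hold for $h$ bounded below (the range $[h_2,+\infty]$, including the Dirichlet endpoint) and, for thin shells $\alpha\ge\alpha_2$, for every $h$ (whence $\bar h_2=+\infty$); strongly negative $h$ remains admissible up to the threshold $\bar h_2$, while the intermediate band is precisely where a negative Robin slope can push the profile the ``wrong'' way and destroy either the ordering or the monotonicity. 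Determining $\bar h_2$, $h_2$, $\alpha_2$ and verifying these properties across the regimes is the delicate part. The equality statement then follows by tracing the comparison: $\mathcal F$ (resp.\ $\mathcal F_2$) is strictly decreasing, so equality in $\int_\Omega\mathcal F\le\int_{B_\beta\setminus\overline{B_\alpha}}\mathcal F$ forces the outer and inner equimeasurable sets to be null, i.e.\ $\Omega=B_\beta\setminus\overline{B_\alpha}$ up to a null set, so equality in \eqref{eq:SWND}, \eqref{eq:SWNDx}, or \eqref{eq:SWNDy} holds only for the concentric shell.
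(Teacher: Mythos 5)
Your treatment of part \ref{thm1:1} and of \eqref{eq:SWNDx} is essentially the paper's own proof: the same trial functions $g(|x|)x_i/|x|$ built from the degree-one profile, orthogonality to the first eigenfunction via order-$2$/central symmetry, the summed Weinberger comparison, and, under order-$4$ symmetry, mutual orthogonality together with group-equivalence of the $\phi_i$. Your two deferred ``ODE-level facts'' are precisely the paper's Lemma~\ref{lem:n+2}/Corollary~\ref{cor:multiplicity} (the spectral ordering that produces the thresholds $h_2$, $\bar h_2$, $\alpha_2$) and Lemma~\ref{lem:v} feeding into Proposition~\ref{prop:bound}. One caveat there: what is actually proved and needed is not monotonicity of $\mathcal F$, which may fail inside the shell because of the $(g')^2$ term, but the pointwise comparison with the value at $r=\beta$, namely inequality \eqref{eq:long} after discarding $(g')^2\ge 0$ on $B_\beta\setminus\Omega$, combined with the constancy of the extension on $\Omega\setminus B_\beta$.

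The genuine gap is in your proof of \eqref{eq:SWNDy}. You fix the single degree-two trial function $\psi=H(|x|)x_1x_2/|x|^2$ and assert that the Weinberger comparison ``summed over the full degree-two block'' yields $R(\psi)\le\tau_{N+2}$. Summation over an orthonormal basis $\{Y_k\}$ of degree-two spherical harmonics controls only the \emph{sum} of the deficits $a(HY_k,HY_k)-\tau_{2,1}\|HY_k\|_{L^2(\Omega)}^2$ (this is where the identity $\sum_k Y_k^2\equiv\mathrm{const}$ enters, with $\tau_{2,1}=\tau_{N+2}(B_\beta\setminus\overline{B_\alpha})$ in the admissible range of $h$). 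To pass from the sum to the single term for $Y=x_1x_2/|x|^2$ you would need all $Y_k$ to have equal deficits over $\Omega$, i.e.\ to be carried onto one another by the symmetry group of $\Omega$. Under order-$4$ symmetry this is false: the group maps the product harmonics $x_ix_j/|x|^2$ among themselves, but it does not relate them to the ``diagonal'' harmonics such as $(x_1^2-x_2^2)/|x|^2$, and neither subfamily radializes on its own, since $2\sum_{i<j}\theta_i^2\theta_j^2=1-\sum_i\theta_i^4$ is not constant on the sphere. For a cube-like $\Omega_{\textnormal{out}}$, mass concentrates along diagonal directions where $(x_1x_2)^2/|x|^4$ is largest, so nothing prevents the deficit of your $\psi$ from exceeding the (nonpositive) block average. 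This is exactly the pitfall the paper avoids: its trial function is the single fixed combination $w$ of \eqref{eq:ww}, involving \emph{all} degree-two harmonics with the $l=2$ profile; under order-$4$ symmetry every cross term in $w^2$, in $|\nabla w|^2$, and on $\partial B_\alpha$ integrates to zero, the surviving sum of squares is constant on spheres, and then Proposition~\ref{prop:bound} with $l=2$ applies. (See Remark~\ref{rem:thm}: an imprecision of this very kind occurred in the earlier Neumann-case argument.) A nonconstructive repair in the spirit of the paper's planar proof of Theorem~\ref{thm1-ext} is also possible --- at least one basis harmonic $Y_{k_0}$ has nonpositive deficit, and each basis harmonic is admissible (products by oddness, diagonals because order-$4$ symmetry equalizes the moments $\int_\Omega H(|x|)|x|^{-2}x_j^2\phi_1\,dx$ over $j$) --- but then the degree-two trial function cannot be prescribed in advance, as you did.
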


\begin{figure}[ht!]
	\centering
	\includegraphics[width=0.6\linewidth]{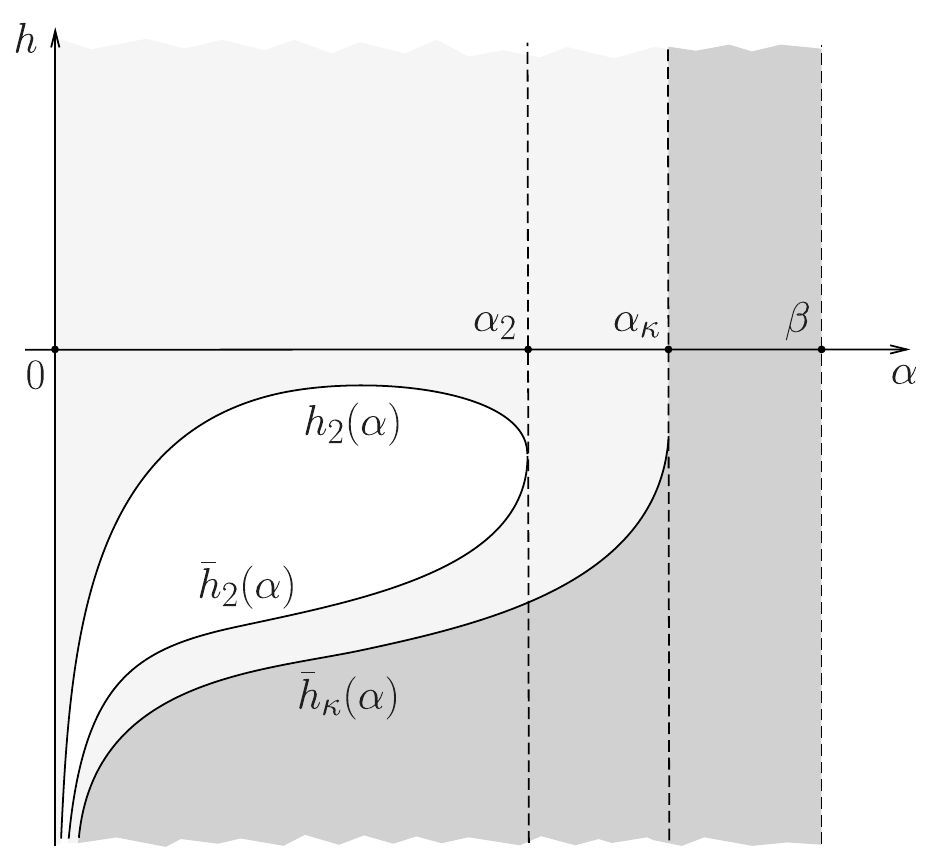}\\
	\caption{A schematic graph of the assumptions of Theorem~\ref{thm1-ext} (dark gray, $\kappa \geq 3$) and Theorems~\ref{thm1-ext},~\ref{thm1} (the union of light and dark grays, $\kappa=2$)  on the $(\alpha,h)$-plane.}
	\label{fig:thm1}
\end{figure}

We do not know whether the inequalities 
\eqref{eq:SWhigh}, \eqref{eq:SWND}, \eqref{eq:SWNDx}, \eqref{eq:SWNDy} remain valid for all $h \in \mathbb{R} \cup \{+\infty\}$ and $\alpha \in (0,\beta)$. 
The assumptions on $h$ and $\alpha$ in Theorems~\ref{thm1-ext},~\ref{thm1} (see Figure~\ref{fig:thm1}) occur from the method of proof requiring eigenfunctions associated with 
$\tau_i(B_\beta \setminus \overline{B_\alpha})$ to be nonradial.
In Remark~\ref{rem:radial} below, we discuss that, in general, already \textit{second} eigenfunctions in the planar ring
$B_\beta \setminus \overline{B_\alpha}$ can be radial for certain $h<0$ and $0<\alpha<\beta$ (which results in the restrictions on $h$ in Theorems~\ref{thm1-ext},~\ref{thm1}).
This curious fact indicates a geometrically simple counterexample to the well-known Payne conjecture asserting that nodal lines of second eigenfunctions have to intersect the boundary of $\Omega$. 
We refer to \cite{ken1} for a related discussion.

Let us note that several works mentioned above (see \cite{AK,FK,hersh0,hersh} and also \cite{PPT,PW}) further provide estimates for $\tau_1(\Omega)$ in the case of reversed mixed boundary conditions, such as the Neumann conditions on the inner boundary and the Robin or Dirichlet conditions on the outer boundary, and under more general assumptions on the geometry of the domain.
Our choice of the boundary conditions in \eqref{eq:D} and the spherical shape of the ``hole'' 
is dictated by the method of the proof, which is common for all our results -- Theorems~\ref{thm0}, \ref{thm1-ext}, and \ref{thm1}. 
The method
is built upon and develops the original arguments of \textsc{Weinberger}  \cite{weinberger}.
More precisely, we use an orthogonal basis of eigenfunctions of \eqref{eq:D} in the spherical shell $B_\be\setminus \overline{B_\al}$ to construct appropriate finite-dimensional subspaces of $H^1(\Omega)$ for a minimax characterization of $\tau_i(\Om)$. 
This is done by extending eigenfunctions from $B_\be\setminus \overline{B_\al}$ to $\Omega = \Omega_{\text{out}} \setminus \overline{B_\al}$. 
In the case of higher eigenvalues, symmetry assumptions on $\Omega$ help to guarantee certain orthogonality of basis elements of these trial subspaces. 
In \cite{weinberger} and \cite{BH}, dealing with the Neumann eigenvalues $\tau_2(\Omega)$ and $\tau_3(\Omega)$, respectively, this  orthogonality is justified for any domain regardless of its symmetry, but the obtained upper bounds are not sensitive to the inclusion of ``holes''.
When dealing with ``holes'', the required orthogonality is not generally true for arbitrary domains even in the case of $\tau_2(\Omega)$, as can be seen from counterexamples in \cite[Section~4]{ABD1} or Section~\ref{sec:counterexample} below.
(Note that the orthogonality can be \textit{assumed}, 
which leads to more general but less constructive assumptions on $\Omega$, see a discussion in Remark~\ref{rem:symmetry} below.)
A key inequality for the proofs of Theorems~\ref{thm1-ext} and~\ref{thm1} is given by Proposition~\ref{prop:bound} and it is used to show that the constructed trial subspaces deliver required upper bounds on $\tau_i(\Omega)$. 
We were not able to extend this inequality for ``holes'' of a more general shape, but the possibility of such  extension does not seem hopeless to us.
Finally, let us mention that, to the best of our knowledge, applications of \textsc{Weinberger}'s approach to eigenvalue problems with non-Neumann boundary conditions are rare in the literature, and we can only refer to \cite{FL1} for an estimate of $\tau_2(\Omega)$ with negative $h$.

%\smallskip
The remaining part of this work has the following structure.
Section~\ref{sec:spectrum} contains some auxiliary results on the structure and properties of the spectrum  $\{\tau_k(B_\beta \setminus \overline{B_\alpha})\}$, most of the proofs being placed in Appendix~\ref{sec:appendix}.
Section~\ref{sec:proof2} is devoted to the proofs of our main results -- Theorems~\ref{thm0}, \ref{thm1-ext},  \ref{thm1}.
In Section~\ref{sec:counterexample}, we discuss the violation of the obtained inequalities for domains that do not satisfy the required symmetry assumptions.
Section~\ref{sec:final} contains some concluding remarks.
Finally, in Appendix~\ref{sec:appendix2}, we provide a characterization of $\tau_{k}(\Omega)$ which is slightly more convenient for the proofs of the main results than \eqref{eq:tauk}.

\section{Spectrum of spherical shells}\label{sec:spectrum}

In this section, we collect several facts on the structure of eigenvalues and eigenfunctions of the problem \eqref{eq:D} in the spherical shell $B_\beta \setminus \overline{B_\alpha} \subset \mathbb{R}^N$ (with $0<\alpha<\beta$ and $N \geq 2$) needed for our purposes. 
Thanks to the possibility of separation of variables, preliminary results of this section are classical, and we include them for the consistency of exposition.
Some related considerations can be found in, e.g.,
\cite{ABD1,FK,PPT}.
%\cite[Section~2]{ABD1} in the Dirichlet-Neumann case ($h=+\infty)$, in \cite[Sections~3, 4]{FK}, \cite{PPT} in the Robin-Robin and Neumann-Robin cases.

Hereinafter, we will use the notation $\mathbb{N}_0 = \{0,1,2,\dots\}$, whereas $\mathbb{N} = \{1,2,\dots\}$.
Separating the variables, 
one can search for the basis of eigenfunctions of \eqref{eq:D} in $B_\beta \setminus \overline{B_\alpha}$ in the form
\begin{equation}\label{eq:sepvar}
\phi(x) = v(|x|) \, \xi \left(\frac{x}{|x|}\right), ~x\ne 0.
\end{equation}
Here, $\xi$ is an eigenfunction of the Laplace--Beltrami operator $\Delta_{S^{N-1}}$ on the unit sphere $S^{N-1}$ corresponding to the eigenvalue $-l(l+N-2)$. It is well-known that the multiplicity of this eigenvalue is 
\begin{equation}\label{eq:Lambdal}
\Lambda_l
=
\binom{l+N-1}{N-1} - \binom{l+N-3}{N-1},
\end{equation}
see, e.g., \cite[Sections~22.3, 22.4]{shubin}.
In particular, 
if $l=0$, then $\Lambda_0=1$, $\xi$ is a nonzero constant function, and hence the eigenfunction $\phi$ is radial, see \eqref{eq:sepvar}.
Moreover, if $N =2$, then $\Lambda_l=2$ for any $l \in \mathbb{N}$, and if $N \geq 3$, then $\Lambda_1=N$.

The function $v$ in \eqref{eq:sepvar} is an eigenfunction of the Sturm-Liouville eigenvalue problem (\textit{SL problem}, for short) for the equation
\begin{equation}\label{eq:ode}
-v'' - \frac{N-1}{r} v' + \frac{l(l+N-2)}{r^2} v = \tau v, 
\quad r \in (\alpha,\beta),
\end{equation}
with the mixed Robin-Neumann boundary conditions
\begin{equation}\label{eq:bcM}
-v'(\alpha) + hv(\alpha)=0 \quad \text{and} \quad v'(\beta)=0.
\end{equation}
In the formal limiting case $h=+\infty$, we assume that the Dirichlet boundary condition $v(\alpha)=0$ is imposed.

By the standard Sturm--Liouville theory (see, e.g., \cite[Section~13]{weidmann}), 
for any fixed $l \in \mathbb{N}_0$ the spectrum of the problem \eqref{eq:ode}-\eqref{eq:bcM} consists of a sequence of eigenvalues $\{\tau_{l,j}\}_{j \in \mathbb{N}}$ with the properties
\begin{equation}\label{eq:row}
\tau_{l,1} 
< 
\tau_{l,2} 
< 
\ldots < \tau_{l,j} < \ldots, 
\quad 
\tau_{l,j} \to +\infty ~\text{as}~ j \to +\infty,
\end{equation}
any $\tau_{l,j}$ is simple, and the associated eigenfunction vanishes exactly $j-1$ times in $(\al,\be)$. Let $H^1((\al,\be);r^{N-1})$ be the weighted Sobolev space with the weight $r^{N-1}.$ 
We define, for brevity, the space
$$
\tilde{H}^1((\al,\be);r^{N-1})=\left\{\begin{array}{ll}
   H^1((\al,\be);r^{N-1}) &\text{ if } h<+\infty, \\  
   \{v\in H^1((\al,\be);r^{N-1}):~  v(\al)=0\} & \text{ if } h=+\infty. 
\end{array}\right.
$$
By rewriting the equation \eqref{eq:ode} as
\begin{equation}\label{eq:ode3}
-(r^{N-1}v')' + l(l+N-2) r^{N-3} v = \tau r^{N-1} v, 
\quad r \in (\alpha,\beta),
\end{equation}
we see that every eigenvalue of the SL problem \eqref{eq:ode}-\eqref{eq:bcM} is a critical value of the Rayleigh quotient
\begin{equation}\label{eq:RQ}
R_l(v)=\frac{\int_\al^\be \left[(v'(r))^2+\frac{l(l+N-2)}{r^2}v^2(r)\right]r^{N-1}\,dr + h \alpha^{N-1} v^2(\alpha)}{\int_\al^\be v^2(r)r^{N-1}\,dr}
\end{equation}
over $\tilde{H}^1((\al,\be);r^{N-1}) \setminus\{0\}$, where we set $h \alpha^{N-1} v^2(\alpha) = 0$ 
in the case $h=+\infty$. 
Consequently, each eigenvalue $\tau_{l,j}$ can be characterized by, e.g., the Courant--Fischer minimax formula as
\begin{equation}\label{eq:taudef}
\tau_{l,j}
=
\min_{Z\in \mathcal{Z}_j}\max_{u\in Z\setminus \{0\}}R_l(u),
\end{equation}
where $\mathcal{Z}_j$ is the collection of all $j$-dimensional subspaces of $\tilde{H}^1((\al,\be);r^{N-1}).$ 
In particular, 
\begin{equation}\label{eq:taudef-first}
	\tau_{l,1}
	=
	\min_{u\in \tilde{H}^1((\al,\be);r^{N-1})\setminus \{0\}}R_l(u).
\end{equation}
Noting that $R_l(v)$ is (strictly) increasing with respect to $l$ for any nonzero $v$, it can be proved that for each fixed $j\in \mathbb{N}$,
\begin{equation}\label{eqn:col}
\tau_{0,j}
< 
\tau_{1,j}
< 
\ldots < \tau_{l,j} < \ldots, 
\quad 
\tau_{l,j} \to +\infty ~~\text{as}~~ l \to +\infty.
\end{equation}
Let us also mention that any Robin-Neumann eigenvalue $\tau_{l,j}$ converges to the Dirichlet-Neumann  eigenvalue with the same indices as $h \to +\infty$, see \cite[Theorem~(a):(i)]{EMZ} and Lemma~\ref{lem:tau-derivative} below.

The following lemma, which we prove in Appendix~\ref{sec:appendix}, describes the behavior of the first eigenvalues and eigenfunctions of the SL problem \eqref{eq:ode}-\eqref{eq:bcM}, and it
is needed for the proof of Proposition  \ref{prop:bound} below.
In \cite[Lemma~2.7]{ABD1}, a related result for the Neumann case $h=0$ can be found.
%make sure that l=0 is ok!!!!!
\begin{lemma}\label{lem:v}
	Let $h \in \mathbb{R} \cup \{+\infty\}$, $0<\alpha<\beta$, 
	and $l \in \mathbb{N}_0$.
	Let $v$ be a positive  eigenfunction corresponding to the eigenvalue $\tau_{l,1}$ of the SL problem \eqref{eq:ode}-\eqref{eq:bcM}.
	Then
	\begin{equation}\label{eq:long}
		\left(\frac{l(l+N-2)}{r^2}
		-
		\tau_{l,1}\right)v^2(r)
		\geq
		\left(\frac{l(l+N-2)}{\beta^2}
		-
		\tau_{l,1}\right)v^2(\beta),
		\quad r \in (\alpha,\beta),
	\end{equation}
	and
	the following assertions are satisfied:
	\begin{enumerate}[label={\rm(\arabic*)}]
		\item\label{eq:long:1} Let $l=0$.
		\begin{enumerate}[label={\rm(\roman*)}]
			\item\label{eq:long:1:i} If $h > 0$, then $v'>0$ in $(\alpha,\beta)$ and $\tau_{0,1}>0$.
			\item\label{eq:long:1:ii} If $h = 0$, then $v'=0$ in $(\alpha,\beta)$ and $\tau_{0,1}=0$.
			\item\label{eq:long:1:iii} If $h < 0$, then $v'<0$ in $(\alpha,\beta)$ and $\tau_{0,1}<0$.
		\end{enumerate}
		\item\label{eq:long:2} Let $l \geq 1$. 
		\begin{enumerate}[label={\rm(\roman*)}]
  \item\label{eq:long:2:iii}
			There exists $h_1<0$ such that $\tau_{l,1}>0$ if $h > h_1$, $\tau_{l,1}=0$ if $h=h_1$, and $\tau_{l,1}<0$ if $h < h_1$.
		Moreover, $\tau_{l,1}$ continuously increases with respect to $h$.
			\item\label{eq:long:2:i} If $h \geq 0$, then $v'> 0$ in $ (\alpha,\beta)$.
			\item\label{eq:long:2:ii} 
			There exists $h_0<0$ with the following properties: 
			\begin{enumerate}[label={\rm(\alph*)}]
			\item\label{eq:long:2:ii:a} If $h \in (h_0,0)$, then there exists $\gamma \in (\alpha,\beta)$ such that
			$v' < 0$ in $(\alpha,\gamma)$, $v'(\gamma)=0$, and $v' > 0$ in $(\gamma,\beta)$.
			\item\label{eq:long:2:ii:b} If $h \leq h_0$, then $v'<0$ in $(\alpha,\beta)$.
			\end{enumerate}
		\end{enumerate}
	\end{enumerate}
\end{lemma}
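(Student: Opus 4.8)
The plan is to rewrite \eqref{eq:ode} in the divergence form \eqref{eq:ode3} and to study everything through the single auxiliary function $u(r):=r^{N-1}v'(r)$. From \eqref{eq:ode3} it satisfies $u'(r)=r^{N-1}q(r)v(r)$, where $q(r):=\frac{l(l+N-2)}{r^2}-\tau_{l,1}$, while the boundary conditions \eqref{eq:bcM} give $u(\beta)=0$ and $u(\alpha)=\alpha^{N-1}hv(\alpha)$ (and $u(\alpha)=\alpha^{N-1}v'(\alpha)>0$ in the Dirichlet case $h=+\infty$). Since $r^{N-1}v(r)>0$ on $(\alpha,\beta)$, we have $\sgn u'(r)=\sgn q(r)$, and $q$ is non-increasing (strictly decreasing if $l\ge 1$, constant if $l=0$). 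Hence $q$ changes sign at most once, from $+$ to $-$, so $u$ is either monotone or first increasing and then decreasing; combined with $u(\beta)=0$ and the sign of $u(\alpha)$, this one observation drives every assertion.

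For the sign of $\tau_{l,1}$ and its dependence on $h$ I would use the variational characterization \eqref{eq:taudef-first}: for a fixed admissible $v$ the quotient $R_l(v)$ in \eqref{eq:RQ} is an increasing affine function of $h$ with slope $\alpha^{N-1}v^2(\alpha)/\int_\alpha^\beta v^2 r^{N-1}\,dr\ge 0$, so $\tau_{l,1}(h)=\min_v R_l(v)$ is continuous, non-decreasing and concave on $\mathbb{R}$; strict monotonicity follows from the Hadamard-type formula $\frac{d}{dh}\tau_{l,1}=\alpha^{N-1}v^2(\alpha)/\int_\alpha^\beta v^2 r^{N-1}\,dr>0$ (using $v(\alpha)>0$ when $h<+\infty$), and continuity up to $h=+\infty$ with the Dirichlet value as limit is supplied by \cite{EMZ} and Lemma~\ref{lem:tau-derivative}. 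Testing with $v\equiv 1$ gives $\tau_{0,1}<0,\,=0,\,>0$ for $h<0,\,=0,\,>0$ respectively (case~\ref{eq:long:1}); for $l\ge 1$ the term $l(l+N-2)\int_\alpha^\beta r^{-2}v^2 r^{N-1}\,dr$ forces $\tau_{l,1}(0)>l(l+N-2)/\beta^2>0$, and since $\tau_{l,1}(h)\to-\infty$ as $h\to-\infty$ (test with any fixed $v$ having $v(\alpha)\ne 0$), I obtain the thresholds $h_1<0$ with $\tau_{l,1}(h_1)=0$ (assertion~\ref{eq:long:2:iii}) and $h_0\in(h_1,0)$ with $q(\beta)=0$, i.e. $\tau_{l,1}(h_0)=l(l+N-2)/\beta^2$.

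The sign patterns of $v'$ then follow by reading $\sgn v'=\sgn u$. If $h\ge 0$ (including $h=+\infty$), then $u(\alpha)\ge 0$; the case $q\ge 0$ throughout is impossible here, so $q(\beta)<0$, $u$ is genuinely unimodal, and its minimum over $[\alpha,\beta]$ is attained at an endpoint and equals $0$, giving $u>0$ and hence $v'>0$ on $(\alpha,\beta)$ (assertion~\ref{eq:long:2:i}). If $h<0$, then $u(\alpha)<0$, which rules out $q\le 0$ on all of $(\alpha,\beta)$ and therefore yields $q(\alpha)>0$; the sign of $q(\beta)$ now decides the two sub-cases. For $h\le h_0$ one has $q\ge 0$ throughout, so $u$ is non-decreasing with $u(\beta)=0$, whence $u<0$ and $v'<0$ on $(\alpha,\beta)$ (case~\ref{eq:long:2:ii:b}); for $h\in(h_0,0)$ the function $q$ changes sign once at an interior $r_*$, and $u$ rises from $u(\alpha)<0$ through a single zero $\gamma\in(\alpha,r_*)$ to a positive maximum and back to $u(\beta)=0$, giving $v'<0$ on $(\alpha,\gamma)$, $v'(\gamma)=0$, $v'>0$ on $(\gamma,\beta)$ (case~\ref{eq:long:2:ii:a}). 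The case $l=0$ is immediate, since then $q\equiv-\tau_{0,1}$ is constant and $\sgn v'=\sgn\tau_{0,1}$.

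Finally, I would deduce \eqref{eq:long}, written as $w(r):=q(r)v^2(r)\ge q(\beta)v^2(\beta)=:w(\beta)$, from the same dichotomy. If $q(\beta)\ge 0$, then $q(r)\ge q(\beta)\ge 0$ and, as above, $v$ is non-increasing, so $v^2(r)\ge v^2(\beta)$ and the product inequality is clear. If $q(\beta)<0$, then $w(\beta)<0$; for $r$ below the sign-change point $r_*$ one has $q(r)\ge 0$, hence $w(r)\ge 0>w(\beta)$, while for $r\in(r_*,\beta)$ the $u$-argument gives $v'>0$, so $v^2(r)\le v^2(\beta)$, and combining $q(r)\ge q(\beta)$ with $q(\beta)<0$ yields $w(r)\ge q(\beta)v^2(r)\ge q(\beta)v^2(\beta)$. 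I expect the \emph{main obstacle} to be the threshold bookkeeping in the regime $h<0$ — namely establishing $q(\alpha)>0$ for all $h<0$, locating $h_0$ and $h_1$ through the monotone dependence of $\tau_{l,1}(h)$, and verifying the ordering $h_1<h_0<0$ — rather than \eqref{eq:long} itself, which reduces to elementary sign-chasing once the auxiliary function $u$ is in hand.
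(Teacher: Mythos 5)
Your proof is correct, and while it uses the paper's basic toolkit --- the divergence form \eqref{eq:ode3}, the quantity $u(r)=r^{N-1}v'(r)$ pinned at $u(\beta)=0$, the monotone coefficient $q(r)=\frac{l(l+N-2)}{r^2}-\tau_{l,1}$, and the Hadamard-type strict monotonicity of $h\mapsto\tau_{l,1}(h)$ (the paper's Lemma~\ref{lem:tau-derivative}) --- it organizes the hardest part in a genuinely different and arguably cleaner way. The paper defines $h_0$ nonconstructively as $\inf\{h<0:\ v_h' \text{ changes sign in } (\alpha,\beta)\}$ and then needs three ``Observations,'' including a perturbation-in-$h$ continuity argument, to show that the set of such $h$ is exactly $(h_0,0)$ and to obtain the dichotomy \ref{eq:long:2:ii:a}/\ref{eq:long:2:ii:b}. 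You instead characterize the threshold explicitly: $h_0$ is the unique solution of $\tau_{l,1}(h_0)=l(l+N-2)/\beta^2$, i.e.\ the value at which $q(\beta)=0$, which exists, is unique, and is negative because $\tau_{l,1}(\cdot)$ is continuous, strictly increasing, tends to $-\infty$ as $h\to-\infty$, and satisfies $\tau_{l,1}(0)>l(l+N-2)/\beta^2$. With this, for each \emph{fixed} $h$ the full sign pattern of $v'$ is read off statically from the two signs $\sgn u(\alpha)=\sgn h$ and $\sgn q(\beta)$ (monotone-or-unimodal $u$ with $u(\beta)=0$); no continuity-in-$h$ of the qualitative behavior of eigenfunctions is needed, and the ordering $h_1<h_0<0$ comes for free from monotonicity of $\tau_{l,1}$. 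Your proof of \eqref{eq:long} is then the same two-step product inequality as the paper's, merely indexed by the sign of $q(\beta)$ rather than by ranges of $h$. Two small points should be made explicit, which the paper isolates in Lemma~\ref{lem:vneq0}: for $h<+\infty$ you need $v(\alpha)>0$ (if $v(\alpha)=0$, the Robin condition gives $v'(\alpha)=0$ and ODE uniqueness forces $v\equiv 0$) in order to conclude $\sgn u(\alpha)=\sgn h$; and in the Dirichlet case your argument only needs $v'(\alpha)\ge 0$, which is free from $v>0$ and $v(\alpha)=0$, so the strict inequality $v'(\alpha)>0$ you assert (which requires the boundary point lemma) is not actually needed.
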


Let us now provide a result that plays a key role in the proof of Theorems~\ref{thm1-ext} and~\ref{thm1}.
This result is a suitable (but not direct) generalization of \textsc{Weinberger}'s arguments \cite[Eqs.~(2.11)-(2.17)]{weinberger}, since they are not generally applicable to non-Neumann problems and domains with ``holes'' because of the behavior of the corresponding eigenfunctions.
Its proof is reminiscent of that of \cite[Proposition~2.8]{ABD1} which covers only the Neumann case $h=0$ but deals with ``holes'' of a more general shape.

%make sure that l=0 is ok!!!
\begin{proposition}\label{prop:bound}
		Let $\Omega$ satisfy the assumption \ref{assumption} and let $\beta>\alpha$ be such that
		$|\Omega| = |B_\beta \setminus B_\alpha|$.
		Let $h \in \mathbb{R} \cup \{+\infty\}$, $l \in \mathbb{N}_0$, and let $v$ be a positive  eigenfunction corresponding to the eigenvalue $\tau_{l,1}$ of the SL problem \eqref{eq:ode}-\eqref{eq:bcM}. 
		Define
		\begin{equation}\label{eq:G}
        G(r)
        =
        \left\{
        \begin{aligned}
      &v(r) 	&&\text{for}~ r \in [\alpha,\beta), \\
      &v(\beta) &&\text{for}~ r \geq \beta.
        \end{aligned}
        \right.
        \end{equation}
		Then
	\begin{equation}\label{eq:bound1}
	 \frac{\int_{\Omega}
	\left(
	(G'(|x|))^2 + \frac{l(l+N-2)}{|x|^2}G^2(|x|)
	\right)
	dx + h \int_{\partial B_\al}  G^2(|x|) \, dS}
{\int_{\Omega} G^2(|x|) \, dx}
\leq \tau_{l,1}.
	\end{equation} 
Moreover, equality holds in \eqref{eq:bound1} if and only if either $\Omega = B_\beta \setminus \overline{B_\alpha}$ or $h,l=0$.
\end{proposition}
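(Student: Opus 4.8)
The plan is to pass to polar coordinates and encode the geometry of $\Omega$ through its spherical section measure. Writing $s(r):=\mathcal{H}^{N-1}(\partial B_r)=\omega_{N-1}r^{N-1}$ for the full sphere and $\sigma(r):=\mathcal{H}^{N-1}(\Omega\cap\partial B_r)$ for the portion inside $\Omega$, every radial integral in \eqref{eq:bound1} becomes a one-dimensional integral against $\sigma$; in particular $\int_\Omega G^2\,dx=\int_\alpha^\infty G^2(r)\sigma(r)\,dr$, $\int_\Omega (G'(|x|))^2\,dx=\int_\alpha^\beta (v')^2\sigma\,dr$ (since $G$ is constant beyond $\beta$), and $\int_{\partial B_\alpha}G^2\,dS=s(\alpha)v^2(\alpha)$. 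Two structural facts will be used throughout: $0\le\sigma(r)\le s(r)$, with $\sigma\equiv s$ on a right neighbourhood of $\alpha$ because $B_\alpha$ is compactly contained in $\Omega_{\text{out}}$; and the normalization $|\Omega|=|B_\beta\setminus B_\alpha|$, which reads $\int_\alpha^\infty\sigma\,dr=\int_\alpha^\beta s\,dr$, equivalently $\int_\beta^\infty\sigma\,dr=\int_\alpha^\beta(s-\sigma)\,dr$.

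First I would record the eigenvalue identity for $v$ on the shell. Since $v$ realizes the minimum \eqref{eq:taudef-first} of $R_l$, multiplying \eqref{eq:ode3} by $v$ and integrating (or reading off \eqref{eq:RQ} and clearing the common factor $\omega_{N-1}$) gives, with $w(r):=\tau_{l,1}-\tfrac{l(l+N-2)}{r^2}$,
$$\int_\alpha^\beta (v')^2 s\,dr + h\,s(\alpha)\,v^2(\alpha)=\int_\alpha^\beta w\,v^2\,s\,dr.$$
Subtracting the numerator of \eqref{eq:bound1} from $\tau_{l,1}$ times its denominator, the claim is equivalent to $D\ge 0$, where—after eliminating the boundary term through the identity above and splitting $\int_\alpha^\infty=\int_\alpha^\beta+\int_\beta^\infty$ using $G\equiv v(\beta)$ on $[\beta,\infty)$—one finds
$$D=\int_\alpha^\beta\big[w v^2-(v')^2\big](\sigma-s)\,dr + v^2(\beta)\int_\beta^\infty w\,\sigma\,dr.$$

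The decisive step is to regroup $D$ into manifestly nonnegative pieces using the constant reference value $w(\beta)v^2(\beta)$. Writing $wv^2(\sigma-s)=[wv^2-w(\beta)v^2(\beta)](\sigma-s)+w(\beta)v^2(\beta)(\sigma-s)$ and absorbing the last, constant term into the tail through the volume identity $\int_\alpha^\beta(\sigma-s)\,dr=-\int_\beta^\infty\sigma\,dr$, I obtain
$$D=\int_\alpha^\beta\Big(\big[w(\beta)v^2(\beta)-w(r)v^2(r)\big]+(v'(r))^2\Big)(s-\sigma)\,dr + v^2(\beta)\int_\beta^\infty\big[w(r)-w(\beta)\big]\sigma\,dr.$$
Now all factors are nonnegative: $s-\sigma\ge 0$ and $\sigma\ge 0$ by construction; $w(\beta)v^2(\beta)-w(r)v^2(r)\ge 0$ is exactly the pointwise estimate \eqref{eq:long} of Lemma~\ref{lem:v} (rewritten via $wv^2=-(\tfrac{l(l+N-2)}{r^2}-\tau_{l,1})v^2$); and $w(r)-w(\beta)\ge 0$ for $r\ge\beta$ because $w$ is nondecreasing in $r$. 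Hence $D\ge 0$, proving \eqref{eq:bound1}.

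For the equality discussion I would trace the two integrals. If $h=l=0$, then by Lemma~\ref{lem:v} (case $l=0$, $h=0$) the function $v$ is constant and $\tau_{0,1}=0$, so $w\equiv 0$ and both integrands vanish identically, giving equality for \emph{every} admissible $\Omega$—this is the degenerate alternative in the statement. In all other cases Lemma~\ref{lem:v} guarantees $v'\ne 0$ almost everywhere on $(\alpha,\beta)$ (it has at most one zero), so the first integrand is strictly positive a.e.; its vanishing forces $\sigma=s$ a.e. on $(\alpha,\beta)$. When $l\ge 1$ the factor $w(r)-w(\beta)$ is strictly positive for $r>\beta$, so the tail integral forces $\sigma=0$ a.e. on $(\beta,\infty)$; when $l=0$, $h\ne 0$ the same conclusion follows instead from the volume identity, since $\int_\beta^\infty\sigma\,dr=\int_\alpha^\beta(s-\sigma)\,dr=0$. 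Either way $\Omega$ coincides with $B_\beta\setminus\overline{B_\alpha}$ up to a null set, and since both are open bounded Lipschitz domains this yields $\Omega=B_\beta\setminus\overline{B_\alpha}$. The main obstacle I anticipate is the sign bookkeeping in the regrouping of $D$—choosing $w(\beta)v^2(\beta)$ as the reference value and exploiting the volume identity so that every surviving term is nonnegative—together with cleanly isolating the degenerate $h=l=0$ case in the equality analysis.
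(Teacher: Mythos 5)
Your proof is correct and is essentially the paper's own argument in different notation: your radial section function $\sigma(r)$ together with the identity $\int_\beta^\infty\sigma\,dr=\int_\alpha^\beta(s-\sigma)\,dr$ encodes exactly the paper's set decomposition of $\Omega$ versus the shell and its measure identity \eqref{eq:measures}, and both proofs then rest on the same three ingredients -- the Rayleigh identity for $v$ on $(\alpha,\beta)$, the pointwise inequality \eqref{eq:long} of Lemma~\ref{lem:v} (your bracket $w(\beta)v^2(\beta)-w(r)v^2(r)+(v'(r))^2$ is literally the negative of the paper's integrand \eqref{eq:weinbfin}), and the monotonicity of $l(l+N-2)/r^2$ beyond $\beta$. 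The equality analysis (strict positivity of the bracket a.e.\ via $v'\neq 0$ a.e.\ when $(l,h)\neq(0,0)$, then openness/Lipschitz regularity to upgrade $|\Omega\,\triangle\,(B_\beta\setminus\overline{B_\alpha})|=0$ to equality of sets) likewise matches the paper's.
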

\begin{proof}
For brevity, we introduce the following notation: 
\begin{align*}
\label{eq:notationH}
H(r)
&=
(G'(r))^2+\frac{l(l+N-2)}{r^2}G^2(r),
\quad r>0,\\
\widetilde{G} 
&= 
h \int_{\partial B_\al}  G^2(|x|) \, dS
\equiv
h \alpha^{N-1} |\partial B_1| \, G^2(\alpha).
\end{align*}
With this notation, the desired inequality \eqref{eq:bound1} is equivalent to
\begin{equation}\label{eq:mainineq1}
\int_{\Omega}H(|x|) \,dx+ \widetilde{G}
\leq \tau_{l,1} \int_{\Omega} G^2(|x|) \, dx.
\end{equation}

Let us represent $\Omega$ as a disjoint union of its intersections with $B_\beta$ and the complement $B_\beta^c$:
\begin{equation}\label{eq:Omdec}
\Om
= 
[\Om \cap B_\beta] \cup [\Om \cap B_\beta^c].
\end{equation}
In the same way, recalling that $B_\al$ is the ''hole'' in $\Omega$, we get
\begin{equation}\label{eq:Shelldec}
 B_\beta \setminus \overline{B_\alpha} = 
 [\Om \cap (B_\beta \setminus \overline{B_\alpha})]
 \cup 
 [\Om^c \cap (B_\beta \setminus \overline{B_\alpha})]
 =
 [\Om \cap B_\beta]
 \cup 
 [\Om^c \cap (B_\beta \setminus \overline{B_\alpha})].
\end{equation}
Since $|\Om|=| B_\beta \setminus \overline{B_\alpha}|$ by the assumption, we deduce from \eqref{eq:Omdec} and \eqref{eq:Shelldec} that 
\begin{equation}\label{eq:measures}
 |\Om \cap B_\beta^c|
 =
 |\Om^c \cap (B_\beta \setminus \overline{B_\alpha})|.
\end{equation} 

Using \eqref{eq:Omdec} and \eqref{eq:Shelldec}, we write
\begin{align}
\notag
\int_{\Omega} H(|x|) \, dx 
&= \int_{\Omega\cap B_\beta}  H(|x|)\, dx 
+
\int_{\Omega \cap B_\beta^c} H(|x|) \, dx\\
\label{eq:H1}
&= 
\int_{B_\beta \setminus \overline{B_\alpha}}  H(|x|)\, dx 
- 
\int_{\Om^c \cap (B_\beta \setminus \overline{B_\alpha})}  H(|x|)\, dx
+
\int_{\Omega \cap B_\beta^c} H(|x|) \, dx
\end{align}
and, in the same manner, 
\begin{equation}\label{eq:G1}
\int_{\Omega} G^2(|x|) \, dx
=  
\int_{B_\beta \setminus \overline{B_\alpha}}  G^2(|x|)\, dx 
- 
\int_{\Om^c \cap (B_\beta \setminus \overline{B_\alpha})}  G^2(|x|)\, dx
+
\int_{\Omega \cap B_\beta^c} G^2(|x|) \, dx.
\end{equation}
Substituting \eqref{eq:H1} and \eqref{eq:G1} into \eqref{eq:mainineq1}, we get
\begin{align}
\notag
&\int_{B_\beta \setminus \overline{B_\alpha}}  H(|x|)\, dx -\int_{\Om^c \cap (B_\beta \setminus \overline{B_\alpha})} H(|x|)\, dx 
+
\int_{\Omega \cap B_\beta^c} H(|x|) \, dx
+
\widetilde{G}
\\
\label{eq:weinbx1}
&\leq
\tau_{l,1} \int_{B_\beta \setminus \overline{B_\alpha}}  G^2(|x|)\, dx 
-
\tau_{l,1}\int_{\Om^c \cap (B_\beta \setminus \overline{B_\alpha})}  G^2(|x|)\, dx
+
\tau_{l,1} \int_{\Omega \cap B_\beta^c} G^2(|x|) \, dx.
\end{align}
By the choice of $v$ and the formula \eqref{eq:taudef-first}, we know that 
\begin{align*}
	\tau_{l,1}
	=
	\frac{\int_\al^\be H(r) r^{N-1} \,dr + h \alpha^{N-1} G^2(\alpha)}{\int_\al^\be G^2(r)r^{N-1} \, dr}
	=
	\frac{\int_{B_\beta \setminus B_\alpha} H(|x|) \,dx + \widetilde{G}}{\int_{B_\beta \setminus B_\alpha} G^2(|x|) \, dx},
\end{align*}
and hence \eqref{eq:weinbx1} simplifies to
\begin{align}
\notag
-\int_{\Om^c \cap (B_\beta \setminus \overline{B_\alpha})} H(|x|)\, dx 
&+
\int_{\Omega \cap B_\beta^c} H(|x|) \, dx
\\
\label{eq:weinbxeq1}
&\leq
-\tau_{l,1} 
\int_{\Om^c \cap (B_\beta \setminus \overline{B_\alpha})}  G^2(|x|)\, dx
+
\tau_{l,1} \int_{\Omega \cap B_\beta^c} G^2(|x|) \, dx.
\end{align}

Observe that for any $x \in \Omega \cap B_\beta^c$ we have $|x| \geq \beta$, and hence
\begin{equation*}\label{eq:eqGH}
G(|x|)=G(\beta)
\quad \text{and} \quad
H(|x|)
=
\frac{l(l+N-2)G^2(\beta)}{|x|^2}
\leq
\frac{l(l+N-2)G^2(\beta)}{\beta^2}=H(\beta),
\end{equation*}
where the inequality for $H$ is strict if and only if $|x| > \beta$ and $l \geq 1$.
Therefore, in view of \eqref{eq:measures}, we obtain
\begin{align}
\label{eq:Hstrict}
\int_{\Omega \cap B_\beta^c} H(|x|) \, dx
&\leq
\int_{\Omega \cap B_\beta^c} H(\beta) \, dx
=
\int_{\Om^c \cap (B_\beta \setminus \overline{B_\alpha})}
H(\beta) \,dx,\\
\label{eq:Gstrict}
\int_{\Omega \cap B_\beta^c} G^2(|x|) \, dx
&=
\int_{\Omega \cap B_\beta^c} G^2(\beta) \, dx
=
\int_{\Om^c \cap (B_\beta \setminus \overline{B_\alpha})} G^2(\beta) \, dx,
\end{align}
and the inequality \eqref{eq:Hstrict} is strict if and only if $|\Omega \cap B_\beta^c| > 0$ and $l \geq 1$.

Thanks to \eqref{eq:Hstrict} and \eqref{eq:Gstrict}, we see that  \eqref{eq:weinbxeq1} (and hence the desired inequality \eqref{eq:bound1}) is satisfied if
\begin{equation}\label{eq:mainineq3}
\int_{\Om^c \cap (B_\beta \setminus \overline{B_\alpha})}
\left[
(H(\beta) - H(|x|))
-
\tau_{l,1}
(G^2(\beta)-G^2(|x|))
\right] dx
\leq 
0.
\end{equation}
Noting that $G(|x|) = v(|x|)$ for any $x \in \Om^c \cap (B_\beta \setminus \overline{B_\alpha})$, we rewrite the integrand from \eqref{eq:mainineq3} as follows:
\begin{align}
\notag
&(H(\beta) - H(|x|))
-
\tau_{l,1}
(G^2(\beta)-G^2(|x|))
\\
\label{eq:weinbfin}
&=
\left(\frac{l(l+N-2)}{\beta^2}-\tau_{l,1}\right)v^2(\beta)
-
\left(\frac{l(l+N-2)}{r^2}-\tau_{l,1}\right)v^2(r)
-
((v'(r))^2.
\end{align}
Finally, Lemma~\ref{lem:v} gives the nonpositivity of \eqref{eq:weinbfin}, and hence \eqref{eq:bound1} is established.
Moreover, thanks to \eqref{eq:measures}, the inequality \eqref{eq:mainineq3} is strict if and only if the domain of integration has positive measure and either $l \geq 1$ or $h \neq 0$.
(Here we notice that if $|\Om \cap B_\beta^c|>0$, $l=0$, and $h \neq 0$, then the inequality \eqref{eq:mainineq3} is strict since $|v'|>0$ in $(\alpha,\beta)$, see Lemma~\ref{lem:v}~\ref{eq:long:1}.)
Recalling that $\Omega$ is Lipschitz, we conclude that $|\Om \cap B_\beta^c|=0$  if and only if $\Omega = B_\beta \setminus{B_\alpha}$, which completes the proof.
\end{proof}

Thanks to the basisness  of eigenfunctions of the form \eqref{eq:sepvar} in $L^2(B_\beta \setminus \overline{B_\alpha})$, we have 
\begin{equation}\label{eq:tau=tau}
\{\tau_k(B_\beta \setminus \overline{B_\alpha})\}_{k \in \mathbb{N}} = \{\tau_{l,j}\}_{l \in \mathbb{N}_0, j \in \mathbb{N}}.
\end{equation}
%Moreover, an eigenvalue $\tau_k(B_\beta \setminus \overline{B_\alpha})$ of \eqref{eq:D} which equals to an eigenvalue $\tau_{l,j}$ of \eqref{eq:ode}-\eqref{eq:bcM} has multiplicity at least $\Lambda_l$, and if $l=0$, then there is a radial eigenfunction associated with $\tau_k(B_\beta \setminus \overline{B_\alpha})$.
In order to apply Proposition~\ref{prop:bound} in the proofs of Theorems~\ref{thm1-ext} and~\ref{thm1}, we should determine the position of $\tau_{l,1}$ in the spectrum of \eqref{eq:D} in $B_\beta \setminus \overline{B_\alpha}$.
Since the entries of the infinite matrix $\{\tau_{l,j}\}$ are increasing along rows and columns by \eqref{eq:row} and \eqref{eqn:col}, respectively, we see from \eqref{eq:tau=tau} that
\begin{equation}\label{eq:tau1tau2}
\tau_1(B_\beta \setminus \overline{B_\alpha})=\tau_{0,1}
\quad \text{and} \quad
\tau_2(B_\beta \setminus \overline{B_\alpha})=
\min\{\tau_{1,1},\tau_{0,2}\}
\end{equation}
for any fixed $h \in \mathbb{R} \cup \{+\infty\}$. 
Let us observe that the sign of $\tau_{1,1}$ depends on $h$, while $\tau_{0,2} > 0$ for any $h \in \mathbb{R} \cup \{+\infty\}$, see Lemmas~\ref{lem:v}~\ref{eq:long:2}:\ref{eq:long:2:iii} and~\ref{lem:tau2>0}, respectively.

We start with describing the ordering of the first several eigenvalues of the SL problem \eqref{eq:ode}-\eqref{eq:bcM}.
%, which is important for the proof of Theorems~\ref{thm1-ext} and~\ref{thm1}.
\begin{lemma}\label{lem:n+2}
Let $\beta>0$ and $l \in \mathbb{N}$. 
Then 
for any $\alpha \in (0,
\beta)$ 
there exists $\bar{h}_l^* \in \mathbb{R} \cup \{+\infty\}$ such that for any $h \in (-\infty,\bar{h}_l^*]$ we have
\begin{equation}\label{eq:muj1<mu02}
\tau_{0,1}
<
\tau_{1,1}
<
\cdots
<
\tau_{l,1}
<
\tau_{0,2}.
\end{equation}
Moreover, there exists $\alpha_l^* \in (0,\beta)$
such that $\bar{h}_l^* = +\infty$ for any $\alpha \in [\alpha_l^*,\beta)$.

Furthermore, in the case $l=2$, for any $\alpha \in (0,\beta)$ 
there also exists $h_2^*<0$ such that the chain of inequalities \eqref{eq:muj1<mu02} holds for any $h \in (-\infty,\bar{h}_2^*]\cup[h_2^*,+\infty]$, i.e., \begin{equation}\label{eq:mu11<mu21<mu02}
 \tau_{0,1}<\tau_{1,1}<\tau_{2,1}<\tau_{0,2}.
 \end{equation}
\end{lemma}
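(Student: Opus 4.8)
The chain \eqref{eq:muj1<mu02} contains only one inequality that is not already free: by \eqref{eqn:col} (strict monotonicity of $R_l$ in $l$) the ordering $\tau_{0,1}<\tau_{1,1}<\cdots<\tau_{l,1}$ holds for \emph{every} $h$, so the whole proof reduces to controlling the single comparison $\tau_{l,1}<\tau_{0,2}$ as a function of $h$ (and, for the second assertion, of $\alpha$). The plan is to separate the two parameter regions by the sign of $\tau_{l,1}$ and by a shrinking-annulus rescaling.

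For the left half-line, the sign structure already suffices. By Lemma~\ref{lem:v}~\ref{eq:long:2}:\ref{eq:long:2:iii} there is $h_1<0$ with $\tau_{l,1}(h)\le 0$ for all $h\le h_1$, while $\tau_{0,2}(h)>0$ for every $h$ by Lemma~\ref{lem:tau2>0}. Hence $\tau_{l,1}(h)\le 0<\tau_{0,2}(h)$ on $(-\infty,h_1]$, the missing inequality (and therefore the chain \eqref{eq:muj1<mu02}) holds strictly there, and I may simply take $\bar h_l^{*}:=h_1$. This choice avoids any endpoint subtlety, since the inequality is strict at $h_1$ as well.

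To upgrade this to $\bar h_l^{*}=+\infty$ when $\alpha$ is close to $\beta$, I would rescale the radial variable by $r=\alpha+(\beta-\alpha)s$, $s\in(0,1)$, and set $\mu=\tau(\beta-\alpha)^2$, $\tilde h=(\beta-\alpha)h$. Then \eqref{eq:ode3}--\eqref{eq:bcM} becomes $-v_{ss}=\mu v$ on $(0,1)$ up to a perturbation of order $O(\beta-\alpha)$ from the weight $r^{N-1}$ and of order $O((\beta-\alpha)^2)$ from the centrifugal term $l(l+N-2)/r^2$, with the exact boundary conditions $v_s(1)=0$ and $v_s(0)=\tilde h\,v(0)$. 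The two lowest eigenvalues $\mu_1(\tilde h)<\mu_2(\tilde h)$ of this clean limiting problem are continuous and strictly ordered for every $\tilde h\in[-\infty,+\infty]$ (with $\mu_1\to-\infty$, $\mu_2\to(\pi/2)^2$ as $\tilde h\to-\infty$, and $\mu_1\to(\pi/2)^2$, $\mu_2\to(3\pi/2)^2$ as $\tilde h\to+\infty$), so by compactness their gap is bounded below by some $c_0>0$ uniformly in $\tilde h$. Since the two perturbations are uniformly small in $\tilde h$, one gets $\bigl(\tau_{0,2}(h)-\tau_{l,1}(h)\bigr)(\beta-\alpha)^2\ge c_0-O(\beta-\alpha)>0$ for all $h$ once $\beta-\alpha$ is small, which furnishes $\alpha_l^{*}$ and the claim $\bar h_l^{*}=+\infty$ for $\alpha\in[\alpha_l^{*},\beta)$.

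Finally, the case $l=2$ asks for a right half-line $[h_2^{*},+\infty]$ with $h_2^{*}<0$, valid for \emph{every} $\alpha\in(0,\beta)$, and this is where the real difficulty lies. Since both $\tau_{2,1}(h)$ and $\tau_{0,2}(h)$ are continuous and increasing in $h$, it suffices to prove the strict inequality $\tau_{2,1}(h)<\tau_{0,2}(h)$ throughout the closed range $h\in[0,+\infty]$, uniformly in $\alpha$: the bad set $\{h:\tau_{2,1}(h)\ge\tau_{0,2}(h)\}$ is then closed and contained in $(-\infty,0)$, so its supremum is negative and any $h_2^{*}$ strictly between that supremum and $0$ works. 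The endpoint $h=+\infty$ reduces to the Dirichlet--Neumann comparison $\tau_{2,1}^{D}<\tau_{0,2}^{D}$; for $\alpha$ near $\beta$ it follows from the rescaling above, but establishing it for all $\alpha\in(0,\beta)$ — and likewise controlling the intermediate range $h\in[0,+\infty]$ — is the main obstacle. I expect to handle it through the explicit radial solutions of \eqref{eq:ode} (Bessel functions matched to the inner Dirichlet and outer Neumann conditions) together with a monotonicity-in-$\alpha$ analysis of the resulting transcendental eigenvalue relation, checking the limiting regimes $\alpha\to 0$ and $\alpha\to\beta$. The restriction to $l=2$, rather than larger $l$, is precisely what keeps $\tau_{l,1}$ below $\tau_{0,2}$ across the full parameter range, in agreement with the $i\le 5$ limitation recorded for $\kappa=3$ in Theorem~\ref{thm1-ext}.
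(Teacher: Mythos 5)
Your reduction to the single comparison $\tau_{l,1}<\tau_{0,2}$ and your treatment of the left half-line are correct and coincide with the paper's opening step: for $h\le h_1$ one has $\tau_{l,1}(h)\le 0<\tau_{0,2}(h)$ by Lemma~\ref{lem:v}~\ref{eq:long:2}:\ref{eq:long:2:iii} and Lemma~\ref{lem:tau2>0}, so $\bar h_l^*=h_1$ serves. For the thin-annulus assertion your rescaling is a genuinely different route from the paper's, but it contains an unjustified step: ``the two perturbations are uniformly small in $\tilde h$'' is false at the level of eigenvalues. On the compactified line $\tilde h\in[-\infty,+\infty]$ the shift of $\mu_1$ caused by an $O(\beta-\alpha)$ perturbation of the coefficients is \emph{not} uniformly $O(\beta-\alpha)$: as $\tilde h\to-\infty$ one has $\mu_1(\tilde h)\sim-\tilde h^{2}$ and, by the derivative formula of Lemma~\ref{lem:tau-derivative}, $d\mu_1/d\tilde h\sim 2|\tilde h|$, so the $O(\beta-\alpha)$ change of the effective Robin parameter produced by the weight $r^{N-1}$ shifts $\mu_1$ by $O\bigl((\beta-\alpha)|\tilde h|\bigr)$, which is unbounded in $\tilde h$. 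The conclusion survives only because the gap $\mu_2-\mu_1$ itself grows like $\tilde h^{2}$ in that regime, and this dichotomy (bounded $|\tilde h|$ versus large $|\tilde h|$) must be argued; your inequality ``$\ge c_0-O(\beta-\alpha)$ for all $h$'' is asserted, not proved. The paper sidesteps all of this: by \cite{EMZ} and Lemma~\ref{lem:tau-derivative}, $\tau_{0,2}(h,\alpha)\searrow\tau_{0,1}(+\infty,\alpha)$ as $h\to-\infty$, so $\tau_{0,2}(h,\alpha)>\tau_{0,1}(+\infty,\alpha)$ for every $h$, and then one only needs the single inequality \eqref{eq:taul1<tau01}, i.e.\ $\tau_{l,1}(0,\alpha)<\tau_{0,1}(+\infty,\alpha)$, which holds for $\alpha$ near $\beta$ because $\tau_{l,1}(0,\alpha)$ stays bounded while $\tau_{0,1}(+\infty,\alpha)\to+\infty$ as $\alpha\to\beta^-$ (both cited from \cite{KWZ2}).

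The real gap is the case $l=2$. Your reduction (the bad set is closed and contained in $(-\infty,0)$, hence its supremum is negative) is fine, but the key inequality $\tau_{2,1}(h)<\tau_{0,2}(h)$ for all $h\in[0,+\infty]$ and all $\alpha\in(0,\beta)$ is precisely what you leave unproved: you name it ``the main obstacle'' and defer to an unexecuted Bessel-function and monotonicity-in-$\alpha$ analysis. Note that such an inequality is not a formality — by Remark~\ref{rem:radial} it genuinely \emph{fails} for some $h<0$ — so a mechanism specific to $h\ge 0$ is required. The paper supplies it with two ingredients you are missing. First, at $h=0$ the inequality $\tau_{2,1}<\tau_{0,2}$ is already known for every $\alpha$ from \cite[Lemma~2.3]{ABD1}. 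Second, and decisively, Lemma~\ref{lem:tau}: if $\tau_{l,1}(\hat h)\le\tau_{0,2}(\hat h)$ at some $\hat h\ge 0$, then $\tau_{l,1}(h)<\tau_{0,2}(h)$ for all $h\in(\hat h,+\infty]$. This is a crossing-prevention argument: by Lemma~\ref{lem:tau-derivative} the $h$-derivative of each eigenvalue equals $\alpha^{N-1}$ times the squared value at $r=\alpha$ of its normalized eigenfunction, and Lemma~\ref{lem:compareuv} (valid for $h\ge0$) shows that whenever $\tau_{l,1}\le\tau_{0,2}$ these boundary values satisfy $0<|u(\alpha)|<|v(\alpha)|$, so $\tau_{0,2}$ increases strictly faster and the curves can never touch to the right of $\hat h$. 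With this propagation lemma, validity at the single point $h=0$ yields validity on all of $[0,+\infty]$, continuity then produces $h_2^*<0$, and no uniform-in-$h$ or Bessel analysis is needed; the same lemma is also what handles the endpoint $h=+\infty$ in your first assertion when $h^*=+\infty$.
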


    We refer to Figure~\ref{fig:thm1}   for a schematic graph of points on the $(\alpha,h)$-plane satisfying the restrictions of Lemma~\ref{lem:n+2} (by taking 
    %$l=2^{\kappa-1}$
    %(i.e
    %$\kappa=1+\log_2l \in \mathbb{N}$
    %and 
    $\bar{h}_{\kappa} = \bar{h}_{l}^*$, ${\alpha}_{\kappa} = {\alpha}_{l}^*$, $h_2 = h_2^*$). 
A proof of Lemma~\ref{lem:n+2}  is placed in Appendix \ref{sec:appendix}.
The chain of inequalities \eqref{eq:mu11<mu21<mu02} is given by \cite[Lemma~2.3]{ABD1} in the Neumann case $h=0$, and the inequality $\tau_{1,1}<\tau_{0,2}$ in the Dirichlet-Neumann case $h=+\infty$ can be obtained from \cite[Theorems~1.5]{AK}. 

\begin{remark}\label{rem:radial}
In general, we do not claim that $\bar{h}_2^* < h_2^*$ for any $\alpha \in (0,\alpha_2^*)$.
But already the inequality $\tau_{1,1}<\tau_{0,2}$ in \eqref{eq:mu11<mu21<mu02} might not be true for arbitrary $h<0$ and $0<\alpha<\beta$, at least in the case $N=2$.
Indeed, let us observe that the general solution of the equation \eqref{eq:ode} in the case $N=2$ is given by
\begin{equation}\label{eq:generalsolution}
	v(r) 
	= 
	c_1 J_{l}\left(\sqrt{\tau} r\right) 
	+ c_2Y_{l}\left(\sqrt{\tau} r\right),
\end{equation}
where $J_l$ and $Y_l$ are the $l$-th order Bessel functions  of the first and second kind, respectively. 
The constants $c_1$ and $c_2$ are determined through the imposed boundary conditions \eqref{eq:bcM}:
$$
\left\{
\begin{aligned}
	c_1 
	\left(
	\sqrt{\tau} J_l'\left(\sqrt{\tau}\alpha\right) 
	-
	h 
	J_l\left(\sqrt{\tau} \alpha\right) 
	\right)
	+
	c_2
	\left(
	\sqrt{\tau} Y_l'\left(\sqrt{\tau}\alpha\right) 
	-
	h 
	Y_l\left(\sqrt{\tau} \alpha\right) 
	\right)
	=0,\\
	c_1 
	\sqrt{\tau} J'_l\left(\sqrt{\tau} \beta\right) 
	+
	c_2 
	\sqrt{\tau} Y'_l\left(\sqrt{\tau} \beta\right)
	=0.
\end{aligned}
\right.
$$
In this way, any eigenvalue $\tau_{l,k}$ can be characterized as the $k$-th zero of the following cross-product of Bessel functions:
\begin{align*}
	B_l(\tau) 
	= 
	&\sqrt{\tau} Y_l'\left(\sqrt{\tau} \beta\right)
	\left(
	\sqrt{\tau} J_l'\left(\sqrt{\tau}\alpha\right) 
	-
	h 
	J_l\left(\sqrt{\tau} \alpha\right) 
	\right)
	\\
	&-
	\sqrt{\tau} J_l'\left(\sqrt{\tau} \beta\right)
	\left(
	\sqrt{\tau} Y_l'\left(\sqrt{\tau}\alpha\right) 
	-
	h 
	Y_l\left(\sqrt{\tau} \alpha\right) 
	\right).
\end{align*}
Since the functions $J_l$, $J_l'$, $Y_l$, $Y_l'$ are non-oscillatory, we can find roots of $B_l$ with arbitrary precision using standard numerical methods. 
In particular, numerical investigation shows that if $N=2$, $\alpha=1$, $\beta=15$, and $h=-0.8$, then $\tau_{1,1} \approx 0.0126485$ and $\tau_{0,2} \approx 0.0100829$, that is, $\tau_{0,2} < \tau_{1,1}$, see Figure~\ref{fig:1}.
This interesting fact indicates that the second eigenfunction of the problem \eqref{eq:D} in $B_\beta \setminus \overline{B_\alpha}$ can be radial for certain values of $h<0$, and we believe that it deserves further elaboration. 

\begin{figure}[ht!]
	\centering
	\includegraphics[width=0.75\linewidth]{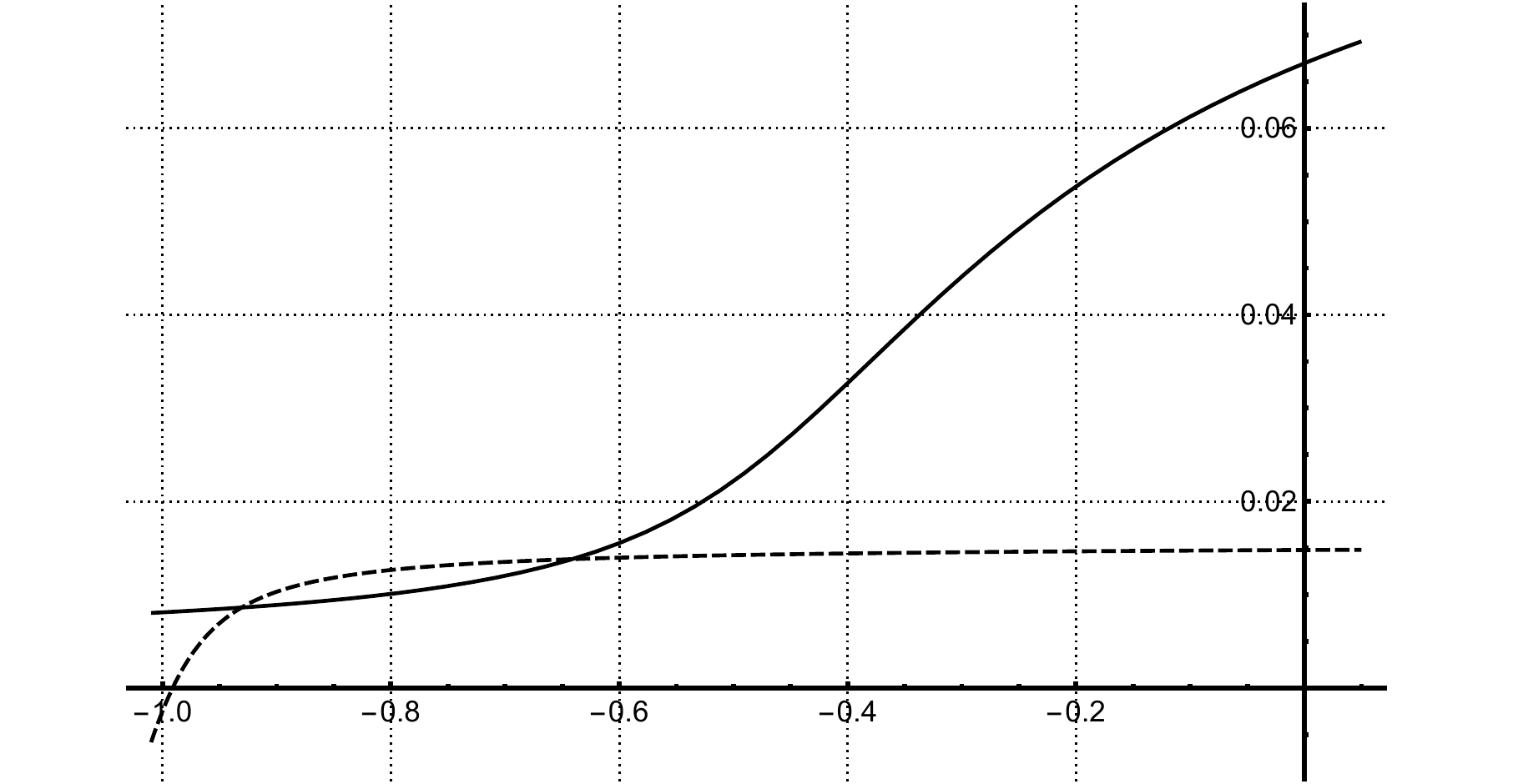}\\
	\caption{The dependence of $\tau_{0,2}$ (thick line) and $\tau_{1,1}$ (dashed line) on $h \in [-1.01,0.5]$, when $N=2$, $\alpha=1$, $\beta=15$.}
	\label{fig:1}
\end{figure}
\end{remark}

\begin{remark}
The inequality \eqref{eq:muj1<mu02}  implies that the index of an eigenvalue of \eqref{eq:D} in $B_\beta \setminus \overline{B_\alpha}$ corresponding to the second radial eigenfunction exceeds any predetermined value if either the spherical shell $B_\beta \setminus \overline{B_\alpha}$ is thin enough, or the Robin parameter $h$ is sufficiently negative.
Notice, however, that \eqref{eq:muj1<mu02} with $l \geq 3$ is not generally true  when $h \in [h_l^*,+\infty]$ with some $h_l^*<0$ since it fails for $h=0$ as $\alpha \to 0$, see the discussion after \cite[Lemma~2.3]{ABD1}.
\end{remark}

Let us now observe that an eigenvalue $\tau_k(B_\beta \setminus \overline{B_\alpha})$ of \eqref{eq:D} which equals to an eigenvalue $\tau_{l,j}$ of \eqref{eq:ode}-\eqref{eq:bcM} has the multiplicity at least $\Lambda_l$ (see~\eqref{eq:Lambdal}).
In view of this fact, the following result, which describes the position of $\tau_{l,1}$ in the spectrum of \eqref{eq:D} in $B_\beta \setminus \overline{B_\alpha}$ and refines~\eqref{eq:tau1tau2},
is a consequence of Lemma~\ref{lem:n+2}, cf.\ \cite[Corollary~2.4]{ABD1} in the Neumann case $h=0$.
\begin{corollary}\label{cor:multiplicity}
Let $0<\alpha<\beta$ and $l \in \mathbb{N}$. 
Let $\bar{h}_l^*$ and $h_2^*$ be as in Lemma~\ref{lem:n+2}.
Then the following assertions are satisfied:
\begin{enumerate}[label={\rm(\roman*)}]
\item\label{cor:multiplicity:1}
If $h \in (-\infty,\bar{h}_2^*]\cup[h_2^*,+\infty]$, then 
\begin{align}
\label{eq:multip3}
\tau_2(B_\beta \setminus \overline{B_\alpha})
&=
\dots
=
\tau_{N+1}(B_\beta \setminus \overline{B_\alpha})
\equiv
\tau_{1+\Lambda_1}(B_\beta \setminus \overline{B_\alpha})
= 
\tau_{1,1},\\
\label{eq:multip4}
\tau_{N+2}(B_\beta \setminus \overline{B_\alpha})
&=
\dots
=
\tau_{\frac{N(N+3)}{2}}(B_\beta \setminus \overline{B_\alpha}) 
\equiv
\tau_{1+\Lambda_1+\Lambda_2}(B_\beta \setminus \overline{B_\alpha}) 
=
\tau_{2,1}.
\end{align}
\item\label{cor:multiplicity:3}
If $h \in (-\infty,\bar{h}_l^*]$, then
\begin{equation}\label{eq:multip5}
\tau_{1+\Lambda_1+\dots+\Lambda_{l-1}+1}(B_\beta \setminus \overline{B_\alpha})
=
\dots
=
\tau_{1+\Lambda_1+\dots+\Lambda_{l-1}+\Lambda_{l}}(B_\beta \setminus \overline{B_\alpha})  =
\tau_{l,1},
\end{equation} 
where in the case $l=1$, \eqref{eq:multip5} is understood as \eqref{eq:multip3}.
In particular, if $N=2$, then  
$$
\tau_{2l}(B_\beta \setminus \overline{B_\alpha})
=
\tau_{2l+1}(B_\beta \setminus \overline{B_\alpha})
=
\tau_{l,1}.
$$
\end{enumerate}
\end{corollary}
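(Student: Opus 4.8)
The plan is to read the statement off directly from Lemma~\ref{lem:n+2}, combined with the two monotonicity relations \eqref{eq:row} and \eqref{eqn:col} and the multiplicity observation recorded just before the corollary, namely that a full eigenvalue $\tau_k(B_\beta \setminus \overline{B_\alpha})$ coinciding with an SL eigenvalue $\tau_{l,j}$ carries multiplicity at least $\Lambda_l$. The essential point I would establish is that, below the threshold value in question, the \emph{only} SL eigenvalues that occur are the first-column entries $\tau_{0,1}, \tau_{1,1}, \dots$, each strictly separated from the next, so that counting the indices reduces to summing the $\Lambda_m$.

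For assertion~\ref{cor:multiplicity:1} I would start from the chain $\tau_{0,1} < \tau_{1,1} < \tau_{2,1} < \tau_{0,2}$ supplied by Lemma~\ref{lem:n+2} under the hypothesis $h \in (-\infty,\bar{h}_2^*] \cup [h_2^*,+\infty]$. The key step is to verify that every remaining SL eigenvalue lies strictly above $\tau_{2,1}$: if $j \geq 2$, then \eqref{eqn:col} followed by \eqref{eq:row} gives $\tau_{l,j} \geq \tau_{0,j} \geq \tau_{0,2} > \tau_{2,1}$, while if $l \geq 3$ and $j=1$, the column monotonicity \eqref{eqn:col} yields $\tau_{l,1} > \tau_{2,1}$ at once. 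Hence the part of the spectrum of \eqref{eq:D} not exceeding $\tau_{2,1}$ consists of exactly the three distinct values $\tau_{0,1} < \tau_{1,1} < \tau_{2,1}$, with multiplicities $\Lambda_0 = 1$, $\Lambda_1 = N$, and $\Lambda_2 = \frac{N(N+1)}{2} - 1$. Recalling $\tau_1(B_\beta \setminus \overline{B_\alpha}) = \tau_{0,1}$ from \eqref{eq:tau1tau2} and listing eigenvalues in nondecreasing order places $\tau_{1,1}$ at the indices $2,\dots,1+\Lambda_1 = N+1$ and $\tau_{2,1}$ at the indices $N+2,\dots,1+\Lambda_1+\Lambda_2$; a short computation then confirms the combinatorial identity $1 + \Lambda_1 + \Lambda_2 = \frac{N(N+3)}{2}$, yielding \eqref{eq:multip3} and \eqref{eq:multip4}.

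Assertion~\ref{cor:multiplicity:3} follows by the identical scheme, now using the longer chain $\tau_{0,1} < \tau_{1,1} < \dots < \tau_{l,1} < \tau_{0,2}$ valid for $h \in (-\infty,\bar{h}_l^*]$. The same dichotomy ($j \geq 2$ handled by \eqref{eqn:col}--\eqref{eq:row} through $\tau_{0,2}$, and $m > l$ with $j=1$ handled by \eqref{eqn:col}) shows that the eigenvalues up to and including $\tau_{l,1}$ are precisely $\tau_{0,1},\tau_{1,1},\dots,\tau_{l,1}$ with exact multiplicities $\Lambda_0,\Lambda_1,\dots,\Lambda_l$. Since the number of indices strictly preceding the block of $\tau_{l,1}$ equals $\Lambda_0 + \Lambda_1 + \dots + \Lambda_{l-1} = 1 + \Lambda_1 + \dots + \Lambda_{l-1}$, the eigenvalue $\tau_{l,1}$ occupies exactly the stated index range, which is \eqref{eq:multip5}. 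For $N=2$ one has $\Lambda_m = 2$ for every $m \geq 1$, so $1 + \Lambda_1 + \dots + \Lambda_{l-1} = 2l-1$ and the block reduces to the two indices $2l$ and $2l+1$, as claimed.

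The only genuine subtlety, and the step I would treat most carefully, is justifying that the multiplicities are \emph{exactly} $\Lambda_m$ and not merely at least $\Lambda_m$: this is precisely where the strict separation of the first-column eigenvalues in Lemma~\ref{lem:n+2}, together with the strict gap $\tau_{l,1} < \tau_{0,2}$ pushing the entire second column above the threshold, are used to exclude any accidental coincidence among SL eigenvalues at or below $\tau_{l,1}$. Everything else is bookkeeping with the two monotonicities \eqref{eq:row} and \eqref{eqn:col}.
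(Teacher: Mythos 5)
Your proposal is correct and is essentially the paper's own argument: the paper states the corollary as a direct consequence of Lemma~\ref{lem:n+2} combined with the multiplicity observation $\Lambda_l$ and the spectral identification \eqref{eq:tau=tau}, which is exactly the counting you carry out. Your careful treatment of why the multiplicities are \emph{exactly} $\Lambda_m$ (pushing the second column above $\tau_{0,2}$ via \eqref{eq:row}--\eqref{eqn:col} and using the strict separation in \eqref{eq:muj1<mu02}) is precisely the bookkeeping the paper leaves implicit.
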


We refer to Lemmas~\ref{lem:vneq0}, \ref{lem:tau-derivative}, \ref{lem:compareuv}, \ref{lem:tau} in Appendix~\ref{sec:appendix} for some additional properties of eigenvalues and eigenfunctions of the SL problem \eqref{eq:ode}-\eqref{eq:bcM} needed for the proofs of Lemmas~\ref{lem:v} and~\ref{lem:n+2}. 
See also \cite{EMZ,KZ,KWZ2} for further results in this direction.

\section{Proofs of the main results}\label{sec:proof2}

\subsection{Proof of Theorem~\ref{thm0}}
Let $v$ be a positive  eigenfunction corresponding to the eigenvalue $\tau_{0,1}$ of the SL problem \eqref{eq:ode}-\eqref{eq:bcM} with $l=0$ and $h \neq 0$.
As in Proposition~\ref{prop:bound}, we define the function
\begin{equation*}
	G(r)
	=
	\left\{
	\begin{aligned}
		&v(r) 	&&\text{for}~ r \in [\alpha,\beta), \\
		&v(\beta) &&\text{for}~ r \geq \beta,
	\end{aligned}
	\right.
\end{equation*}
and observe that $G(r)$ is a $C^1$-function in $[\alpha,+\infty)$.
Hence, we can use the function $x \mapsto G(|x|)$, $x \in \Omega$, as a trial function for the definition \eqref{eq:tau1} of $\tau_1(\Omega)$. 
By Proposition~\ref{prop:bound} with $l=0$ and the first equality in \eqref{eq:tau1tau2}, we obtain the desired bound
\begin{align}
	\notag
	\tau_1(\Omega)
	&\leq
	\frac{\int_{\Omega} |\nabla G(|x|)|^2 \,dx + h \int_{\partial B_\alpha} G^2(|x|) \,dS}{\int_{\Omega} G^2(|x|) \,dx}
	\\
	\label{eq:tau1estim}
	&=
	\frac{\int_{\Omega} (G'(|x|))^2 \,dx + h \int_{\partial B_\alpha} G^2(|x|) \,dS}{\int_{\Omega} G^2(|x|) \,dx}
	\leq 
	\tau_{0,1} = \tau_1(B_\beta \setminus \overline{B_\alpha}).
\end{align}
Moreover, Proposition~\ref{prop:bound} also gives the strict inequality in \eqref{eq:tau1estim} provided $\Omega \neq B_\beta \setminus \overline{B_\alpha}$, since $h \neq 0$.
\qed

\medskip
Let us turn to the proofs of 
Theorems~\ref{thm1-ext} and \ref{thm1}.
We start with a few preliminary remarks.
It is known from the general theory that the first eigenvalue $\tau_{1}(\Omega)$ is simple and the corresponding first eigenfunction $\phi_1$ has a constant sign in $\Omega$.
We assume, without loss of generality, that $\phi_1>0$ in $\Omega$.
It is not hard to observe that $\phi_1$ inherits symmetries of $\Omega$.
Indeed, in the opposite case, the composition of $\phi_1$ with an appropriate element of the symmetry group would be another first eigenfunction linearly independent from $\phi_1$, which contradicts the simplicity of $\tau_{1}(\Omega)$.

In order to prove Theorems~\ref{thm1-ext} and \ref{thm1}, it will be  convenient to work with a slightly different minimax characterization of $\tau_{k}(\Omega)$, namely,
\begin{equation}\label{eq:taukdef}
\tau_{k}(\Omega) 
= 
\min_{Y\in\mathcal{Y}_{k-1}} 
\max_{u \in Y\setminus \{0\}} \frac{\int_{\Omega} |\nabla u|^2 \, dx + h \int_{\partial B_\alpha} u^2 \,dS}{\int_{\Omega} u^2 \, dx},
\quad
k \geq 2,
\end{equation} 
where $\mathcal{Y}_{k-1}$ is the collection of all $(k-1)$-dimensional subspaces of $\tilde{H}^1$ which are $L^2(\Omega)$-orthogonal to $\mathbb{R} \phi_1$. 
For the sake of clarity,  we justify in Lemma~\ref{lem:tau=tautilde} that the characterizations \eqref{eq:taukdef} and \eqref{eq:tauk} are equivalent.

\subsection{Proof of Theorem~\ref{thm1-ext}}
The following important observation partially motivates our symmetry assumption on $\Omega$ and will be used in the proof: since $\Omega$ is assumed to be symmetric of order $2^{\kappa}$ with $\kappa \in \mathbb{N}$, it is  symmetric of order $2^{t+1}$ for any $t \in \{0,\dots,\kappa-1\}$.
Hence, in the polar coordinates $(r,\theta)$, both the domain $\Omega$ and the corresponding first eigenfunction $\phi_1$ of \eqref{eq:D} are invariant under the mapping $\theta \mapsto \theta + \pi/2^{t}$ for any $t \in \{0,\dots,\kappa-1\}$.

We start by proving the inequality \eqref{eq:SWhigh} for the largest admissible index $i=2^{\kappa}$.
Since the multiplicity $\Lambda_j=2$ for any $j \in \mathbb{N}$ in the planar case $N=2$ (see \eqref{eq:Lambdal}), we deduce from Corollary~\ref{cor:multiplicity} (by taking $l=2^{\kappa-1}$ and denoting $\bar{h}_{\kappa} = \bar{h}_{l}^*$, ${\alpha}_{\kappa} = {\alpha}_{l}^*$, $h_2 = h_2^*$) that, under the corresponding assumptions on $h$ and $\alpha$, the desired inequality \eqref{eq:SWhigh} will follow from the inequality
\begin{equation}\label{eq:inequtautau1}
	\tau_{2^{\kappa}}(\Omega) \leq \tau_{2^{\kappa-1},1}.
\end{equation}
Let us first assume that $\kappa  \geq 2$.
In order to prove \eqref{eq:inequtautau1}, we consider a subspace ${Y}_{2^{\kappa}-1} \subset \tilde{H}^1(\Omega)$ defined in the polar coordinates $(r,\theta)$ as 
\begin{align*}
	{Y}_{2^{\kappa}-1} 
	= 
	\text{span}
	\bigg\{ 
	&G(r)\sin(\theta),
	G(r)\cos (\theta),
	\\
	&G(r)\sin(2\theta),
	G(r)\cos (2\theta),
	\\
	&G(r)\sin(3\theta),
	G(r)\cos (3\theta),
	\dots,
	\\
	&G(r)\sin ((2^{\kappa-1}-1)\theta),
	G(r)\cos ((2^{\kappa-1}-1)\theta), w
	\bigg\},
\end{align*}
where we assume that
\begin{equation}\label{eq:wfunc}
	\text{either} \quad 
	w = G(r) 
	\sin (2^{\kappa-1}\theta)
	\quad \text{or} \quad 
	w = G(r) \cos(2^{\kappa-1}\theta),
\end{equation}
and its precise choice will be nonconstructively defined later (see Step~6).
The function 
$G$ is a constant extension of a positive eigenfunction $v$ corresponding to $\tau_{2^{\kappa-1},1}$ as in Proposition~\ref{prop:bound}.
Our aim is to show that ${Y}_{2^{\kappa}-1}$ is an admissible subspace for the definition \eqref{eq:taukdef} of 
$\tau_{2^{\kappa}}(\Omega)$ and the maximum of the corresponding Rayleigh quotient over ${Y}_{2^{\kappa}-1}$ equals $\tau_{2^{\kappa-1},1}$.
We split the consideration into several steps.

\textbf{Step 1. $L^2(\Omega)$-orthogonality to $\phi_1$.}
Taking any $i \in \{1,2,3,\dots,2^{\kappa-1}\}$\footnote{Within the proof, ``$i$'' always stands for a natural number.}, let us show that
$$
\int_\Omega 
G(r)\sin (i\theta)
\phi_1(r,\theta) r \, dr d\theta=0
\quad \text{and} \quad
\int_\Omega 
G(r)\cos (i\theta)
\phi_1(r,\theta) r \, dr d\theta=0.
$$
As a consequence of the prime factorization, we can decompose $i = 2^{t} R$, where $t \in \{0,\dots,\kappa-1\}$ and $R$ is odd. 
Consequently,  
$\Omega$ and $\phi_1$ are invariant under the mapping $\theta \mapsto \theta + \pi/2^{t}$, $i/2^{t}$ is odd, and hence 
\begin{equation}
	\label{eq:sincossincson}
	\sin \left(i\theta+i\frac{\pi}{2^t}\right)
	=
	-
	\sin (i\theta)
	\quad \text{and} \quad 
	\cos \left(i\theta+i\frac{\pi}{2^t}\right)
	=
	-
	\cos (i\theta).
	%\quad x \in \mathbb{R}.
\end{equation}
Now making the change of variables $\theta \mapsto \theta + \pi/2^{t}$,
we get
\begin{align*}
	&\int_\Omega 
	G(r)\sin (i\theta)
	\phi_1(r,\theta) r \, dr d\theta
	\\
	&=
	\int_\Omega 
	G(r)\sin \left(i\left(\theta+\frac{\pi}{2^t}\right)\right)
	\phi_1\left(r,\theta+\frac{\pi}{2^t}\right) r \, dr d\theta
	\\
	&=
	-\int_\Omega 
	G(r)\sin (i\theta)
	\phi_1(r,\theta) r \, dr d\theta,
\end{align*}
and the same is true with $\cos$ instead of $\sin$.
Consequently, each basis element of $Y_{2^{\kappa}-1}$ is $L^2(\Omega)$-orthogonal to $\phi_1$, and hence so is any other element of $Y_{2^{\kappa}-1}$.

\textbf{Step 2. Mutual $L^2(\Omega)$-orthogonality.}
Let us show that basis elements of $Y_{2^{\kappa}-1}$ are $L^2(\Omega)$-orthogonal to each other. 
Since $\cos(x) = \sin(x+\pi/2)$, it is sufficient to prove that \begin{equation}\label{eq:orth1}
	\int_\Omega 
	G^2(r)
	\sin \left(i\theta+n\frac{\pi}{2}\right)
	\sin \left(j\theta+m\frac{\pi}{2}\right)
	r \, dr d\theta
	=
	0
\end{equation}
for any $n,m \in \{0,1\}$ and $i,j \in \{1,\dots,2^{\kappa-1}\}$ with $i \geq j$ such that if $i =j$, then $m \neq n$, and if $i=2^{\kappa-1}$, then $i>j$.
We use the standard identity
\begin{align}
	\notag
	\sin \left(i\theta+n\frac{\pi}{2}\right)
	&\sin \left(j\theta+m\frac{\pi}{2}\right)
	\\
	\label{eq:sinsincos1}
	&=
	\frac{1}{2}
	\cos \left((i-j)\theta + \frac{(n-m)\pi}{2}\right)
	-
	\frac{1}{2}
	\cos \left((i+j)\theta + \frac{(n+m)\pi}{2}\right).
\end{align}
First, we deal with the second term on the right-hand side of \eqref{eq:sinsincos1}.
We have 
$i+j \in \{2,\dots,2^{\kappa-1}+(2^{\kappa-1}-1)\}$, i.e.,
$i+j \in \{2,\dots,2^{\kappa}-1\}$.
As above, we can write $i+j = 2^t R$, where $t \in \mathbb{N}_0$ and $R$ is odd.
It is clear that $t \in \{0,\dots,\kappa-1\}$ since $2^t \leq 2^{\kappa}-1$.
Therefore, $(i+j)/2^t$ is odd, and $\Omega$ is symmetric of order $2^{t+1}$. 
Consequently, using \eqref{eq:sincossincson}, we get
\begin{align*}
	&\int_\Omega 
	G(r)\cos \left((i+j)\theta+\frac{(n+m)\pi}{2}\right) r \, dr d\theta
	\\
	&=
	\int_\Omega 
	G(r)\cos \left((i+j)\left(\theta+\frac{\pi}{2^t}\right)+\frac{(n+m)\pi}{2}\right)
	r \, dr d\theta
	\\
	&=
	-\int_\Omega 
	G(r)\cos \left((i+j)\theta+\frac{(n+m)\pi}{2}\right)
	r \, dr d\theta,
\end{align*}
which yields 
$$
\int_\Omega 
G(r)\cos \left((i+j)\theta+\frac{(n+m)\pi}{2}\right)
r \, dr d\theta=0.
$$
In much the same way, we deal with the first term on the right-hand side of \eqref{eq:sinsincos1}.
Namely, if $i > j$, then we get
$$
\int_\Omega 
G(r)\cos \left((i-j)\theta+\frac{(n-m)\pi}{2}\right)
r \, dr d\theta=0.
$$
On the other hand, if $i=j$, then $m \neq n$, and we have
$$
\int_\Omega 
G(r)\cos \left((i-j)\theta+\frac{(n-m)\pi}{2}\right)
r \, dr d\theta
=
\int_\Omega 
G(r)\cos \left(\frac{\pi}{2}\right)
r \, dr d\theta = 0.
$$
Combining the last three displayed equations, we conclude that \eqref{eq:orth1} is satisfied, i.e., basis elements of $Y_{2^{\kappa}-1}$ are $L^2(\Omega)$-orthogonal to each other.

\textbf{Step 3. Dimension.} 
%$\text{dim}\,{Y}_{2^{\kappa}-1} = 2^{\kappa}-1$.
Plainly, ${Y}_{2^{\kappa}-1}$ has $2^{\kappa}-1$ basis elements.
The mutual $L^2(\Omega)$-orthogonality from Step~2 implies that basis elements are linearly independent, that is,
$\text{dim}\,{Y}_{2^{\kappa}-1} = 2^{\kappa}-1$.

It follow from Steps 1 and 3 that that ${Y}_{2^{\kappa}-1}$ is an admissible subspace for the definition \eqref{eq:taukdef} of 
$\tau_{2^{\kappa}}(\Omega)$. 
In the subsequent Steps 4, 5, 6, we prepare auxiliary properties of ${Y}_{2^{\kappa}-1}$ in order to show, in Step 7, that ${Y}_{2^{\kappa}-1}$ delivers the upper bound $\tau_{2^{\kappa-1},1}$ for $\tau_{2^{\kappa}}(\Omega)$.

\textbf{Step 4. Mutual $H^1_0(\Omega)$-orthogonality.}
Let us show that basis elements of $Y_{2^{\kappa}-1}$ are orthogonal to each other in the $H^1_0(\Omega)$-norm (i.e., their gradients are $L^2(\Omega)$-orthogonal). 
For brevity, consider a function $v_{i,n}=v_{i,n}(x)$ defined as
$$
v_{i,n}(x) 
= 
G(r(x)) \sin\left(i\theta(x) + n\frac{\pi}{2}\right).
$$
We have
\begin{align*}
	\frac{\partial v_{i,n}(x)}{\partial x_1}
	&=
	G'(r) \sin\left(i\theta + n\frac{\pi}{2}\right) \cos(\theta)
	-
	i \frac{G(r)}{r}
	\cos\left(i\theta + n\frac{\pi}{2}\right) \sin(\theta),\\
	\frac{\partial v_{i,n}(x)}{\partial x_2}
	&=
	G'(r) \sin\left(i\theta + n\frac{\pi}{2}\right) \sin(\theta)
	+
	i \frac{G(r)}{r}
	\cos\left(i\theta + n\frac{\pi}{2}\right) \cos(\theta).
\end{align*}
Straightforward calculations give 
\begin{align}
	\notag
	\intO \langle \nabla v_{i,n}, \nabla v_{j,m}\rangle_{\mathbb{R}^2} \,dx
	&=
	\intO (G')^2(r) 
	\sin \left(i\theta+n\frac{\pi}{2}\right)
	\sin \left(j\theta+m\frac{\pi}{2}\right)
	r \, dr d\theta
	\\
	\label{eq:nablavxy1}
	&+
	ij\intO \frac{G^2(r)}{r^2}
	\cos \left(i\theta+n\frac{\pi}{2}\right)
	\cos \left(j\theta+m\frac{\pi}{2}\right)
	r \,dr d\theta.
\end{align}
Let us now fix any $n,m \in \{0,1\}$ and $i,j \in \{1,\dots,2^{\kappa-1}\}$ with $i \geq j$ such that if $i =j$, then $m \neq n$, and if $i=2^{\kappa-1}$, then $i>j$.
Using exactly the same arguments as in Step 2, we see that each integral on the right-hand side of \eqref{eq:nablavxy1} is zero.
This gives the desired orthogonality in the $H^1_0(\Omega)$-norm.

\textbf{Step 5. Norms I.}
Take any $i \in \{1,\dots,2^{\kappa-1}-1\}$ and denote 
\begin{equation}\label{eq:ai-0}
	A_i 
	=
	\int_\Omega 
	G^2(r)\sin^2 \left(i\theta\right)
	r \, dr d\theta.
\end{equation}
We start by showing that
\begin{equation}\label{eq:ai-1}
	A_i
	=
	\int_\Omega 
	G^2(r)\cos^2 (i\theta)
	r \, dr d\theta
	\equiv
	\int_\Omega 
	G^2(r)\sin^2 \left(i\theta+\frac{\pi}{2}\right)
	r \, dr d\theta.
\end{equation}
Since $i \leq 2^{\kappa-1}-1$, we can decompose $i=2^tR$, where $t \in \{0,\dots,\kappa-2\}$ and $R$ is odd. 
Therefore, $\Omega$ is symmetric of order $2^{t+2}$ and
$$
\sin^2 \left(i\theta+\frac{i\pi}{2^{t+1}}\right) 
= 
\sin^2 \left(i\theta+\frac{R\pi}{2}\right)
=
\cos^2 \left(i\theta\right).
$$
Hence, making the change of variables $\theta \mapsto \theta + i\pi/2^{t+1}$,
we obtain \eqref{eq:ai-1} as follows:
\begin{align*}
	A_i 
	=
	\int_\Omega 
	G^2(r)\sin^2 \left(i\theta + \frac{i\pi}{2^{t+1}}\right)
	r \, dr d\theta
	&=
	\int_\Omega 
	G^2(r)\cos^2 \left(i\theta\right)
	r \, dr d\theta.
\end{align*}
Summing now \eqref{eq:ai-0} and \eqref{eq:ai-1}, we find that
\begin{equation}\label{eq:ai-main}
	A_i 
	= 
	\frac{1}{2}\int_\Omega 
	G^2(r) r \, dr d\theta
	\equiv
	\frac{1}{2}\int_\Omega 
	G^2(|x|)\, dx.
\end{equation}

Arguing in much the same way as above and using the expression \eqref{eq:nablavxy1}, we take any $i \in \{1,\dots,2^{\kappa-1}-1\}$, denote 
\begin{equation}\label{eq:bi-0}
	B_i 
	=
	\int_\Omega 
	\left|\nabla_{x} \left(G(r)\sin (i\theta)\right)\right|^2
	r \, dr d\theta
\end{equation}
and deduce that
\begin{align}
	\label{eq:Bisin}
	B_i
	&=
	\intO \left((G')^2(r) \sin^2(i\theta) + \frac{i^2 G^2(r)}{r^2} \cos^2(i\theta)\right) r \,drd\theta 
	\\
	\notag
	&=
	\intO \left((G')^2(r) \sin^2\left(i\theta + \frac{i\pi}{2^{t+1}}\right) + \frac{i^2 G^2(r)}{r^2} \cos^2\left(i\theta + \frac{i\pi}{2^{t+1}}\right)\right) r \,drd\theta
	\\
	\label{eq:bi-1}
	&=
	\intO \left((G')^2(r) \cos^2(i\theta) + \frac{i^2 G^2(r)}{r^2} \sin^2(i\theta)\right) r \,drd\theta 
	\\
	&=
	\label{eq:Bicos}
	\int_\Omega 
	\left|\nabla_{x} \left(G(r)\cos (i\theta)\right)\right|^2
	r \, dr d\theta.
\end{align}
Moreover, summing \eqref{eq:Bisin} and \eqref{eq:bi-1}, we get
\begin{equation}\label{eq:bi-main}
	B_i = \frac{1}{2}
	\int_\Omega 
	\left((G')^2(r) + \frac{i^2 G^2(r)}{r^2} \right)r \, dr d\theta
	\equiv
	\frac{1}{2}\int_\Omega 
	\left((G')^2(|x|) + \frac{i^2 G^2(|x|)}{|x|^2} \right) dx.
\end{equation}

Analogously, 
denoting
$$
C_i = 
h \int_{\partial B_\alpha}
G^2(|x|)\sin^2 (i\theta(x))\,dS,
$$
we see that
$$
C_i = 
h \int_{\partial B_\alpha}
G^2(|x|)\cos^2 (i\theta(x))\,dS,
$$
and hence
\begin{equation}\label{eq:ci-main}
	C_i = 
	\frac{h}{2} \int_{\partial B_\alpha}
	G^2(|x|)\,dS.
\end{equation}
Notice that \eqref{eq:ci-main} is valid for any $i \in \mathbb{N}$ since $\partial B_\alpha$ is symmetric of any order $q \in \mathbb{N}$.

We conclude from \eqref{eq:ai-main}, \eqref{eq:bi-main}, and \eqref{eq:ci-main} that
$$
\frac{B_i+C_i}{A_i}
=
\frac{\int_\Omega 
	\left((G')^2(|x|) + \frac{i^2 G^2(|x|)}{|x|^2} \right) dx
	+
	h \int_{\partial B_\alpha}
	G^2(|x|)\,dS
}{\int_\Omega 
	G^2(|x|)\, dx}.
$$
Clearly, 
\begin{equation}\label{eq:biciai}
	\frac{B_i+C_i}{A_i} \leq \frac{B_j+C_j}{A_j}
	\quad \text{provided}~ 1 \leq i \leq j \leq 2^{\kappa-1}-1.
\end{equation}

\textbf{Step 6. Norms II.}
In the previous step, we considered the norms of all basis elements of $Y_{2^{\kappa}-1}$ except the very last one -- $w$.
It is clear from the above arguments that the symmetry of order $2^{\kappa}$ is not enough to prove the equality of \eqref{eq:ai-0} to \eqref{eq:ai-1} in the case $i=2^{\kappa-1}$. 
Because of that, we proceed differently.
First, we denote
\begin{gather}
	\tilde{A}_1 
	=
	\int_\Omega 
	G^2(r)\sin^2 (2^{\kappa-1}\theta)
	r \, dr d\theta,
	\quad
	\tilde{A}_2 
	=
	\int_\Omega 
	G^2(r)\cos^2 (2^{\kappa-1}\theta)
	r \, dr d\theta, \qquad
	\\
	\label{eq:b0b0}
	\tilde{B}_1 
	=
	\int_\Omega 	\left|\nabla_{x} \left(G(r)\sin(2^{\kappa-1}\theta)\right)\right|^2
	r \, dr d\theta,
	\quad
	\tilde{B}_2
	=
	\int_\Omega 	\left|\nabla_{x} \left(G(r)\cos (2^{\kappa-1}\theta)\right)\right|^2
	r \, dr d\theta, \qquad
	\\
	\label{eq:c0c0}
	\tilde{C}_1 = 
	h \int_{\partial B_\alpha}
	G^2(|x|)\sin^2 (2^{\kappa-1}\theta(x))\,dS,
	\quad 
	\tilde{C}_2 = 
	h \int_{\partial B_\alpha}
	G^2(|x|)\cos^2 (2^{\kappa-1}\theta(x))\,dS. \qquad
\end{gather}
We deduce from 
\eqref{eq:nablavxy1}
(more precisely, see the equalities \eqref{eq:bi-0}$=$\eqref{eq:Bisin} and \eqref{eq:Bicos}$=$\eqref{eq:bi-1}, which remain valid for $i=2^{\kappa-1}$)
that
\begin{align}
\label{eq:tildeB1}
	\tilde{B}_1 
	&=
	\intO \left((G')^2(r) \sin^2(2^{\kappa-1}\theta) + \frac{4^{\kappa-1} G^2(r)}{r^2} \cos^2(2^{\kappa-1}\theta)\right) r \,drd\theta ,
	\\
 \label{eq:tildeB2}
	\tilde{B}_2 
	&=
	\intO \left((G')^2(r) \cos^2(2^{\kappa-1}\theta) + \frac{4^{\kappa-1} G^2(r)}{r^2} \sin^2(2^{\kappa-1}\theta)\right) r \,drd\theta.
\end{align}
Let us now suppose, by contradiction, that
\begin{equation}\label{eq:B0C0A0}
	\frac{\tilde{B}_1 + \tilde{C}_1}{\tilde{A}_1} 
	>
	\frac{\int_\Omega 
		\left((G')^2(|x|) + \frac{4^{\kappa-1} G^2(|x|)}{|x|^2} \right) dx
		+
		h \int_{\partial B_\alpha}
		G^2(|x|)\,dS
	}{\int_\Omega 
		G^2(|x|)\, dx}
\end{equation}
and
\begin{equation}\label{eq:B1C1A1}
	\frac{\tilde{B}_2 + \tilde{C}_2}{\tilde{A}_2} 
	>
	\frac{\int_\Omega 
		\left((G')^2(|x|) + \frac{4^{\kappa-1} G^2(|x|)}{|x|^2} \right) dx
		+
		h \int_{\partial B_\alpha}
		G^2(|x|)\,dS
	}{\int_\Omega 
		G^2(|x|)\, dx}.
\end{equation}
Multiplying \eqref{eq:B0C0A0} by $\tilde{A}_1$ and \eqref{eq:B1C1A1} by $\tilde{A}_2$,  summing the obtained expressions, 
and noting that $\tilde{A}_1 + \tilde{A}_2 = \int_\Omega G^2(|x|) \,dx$, 
we get
\begin{align}
\notag
	(\tilde{B}_1 + \tilde{C}_1 + \tilde{B}_2 + \tilde{C}_2)
	&>
	\frac{\int_\Omega 
		\left((G')^2(|x|) + \frac{4^{\kappa-1} G^2(|x|)}{|x|^2} \right) dx
		+
		h \int_{\partial B_\alpha}
		G^2(|x|)\,dS
	}{\int_\Omega 
		G^2(|x|)\, dx} (\tilde{A}_1+\tilde{A}_2)
	\\
 \label{eq:BBCC}
	&=
	\int_\Omega 
	\left((G')^2(|x|) + \frac{4^{\kappa-1} G^2(|x|)}{|x|^2} \right) dx
	+
	h \int_{\partial B_\alpha}
	G^2(|x|)\,dS.
\end{align}
On the other hand, by the expressions \eqref{eq:tildeB1}, \eqref{eq:tildeB2} and \eqref{eq:c0c0} for $\tilde{B}_1$, $\tilde{B}_2$ and $\tilde{C}_1$, $\tilde{C}_2$, respectively, 
we have
$$
\tilde{B}_1 + \tilde{C}_1 + \tilde{B}_2 + \tilde{C}_2
=
\intO \left((G')^2(r) + \frac{4^{\kappa-1} G^2(r)}{r^2} \right) r \,drd\theta
+   
h \int_{\partial B_\alpha}
G^2(|x|)\,dS,
$$
which is a contradiction to \eqref{eq:BBCC}.
That is, there exists $j \in \{0,1\}$ such that
\begin{equation}\label{eq:biciai2}
	\frac{\tilde{B}_j + \tilde{C}_j}{\tilde{A}_j} 
	\leq 
	\frac{\int_\Omega 
		\left((G')^2(|x|) + \frac{4^{\kappa-1} G^2(|x|)}{|x|^2} \right) dx
		+
		h \int_{\partial B_\alpha}
		G^2(|x|)\,dS
	}{\int_\Omega 
		G^2(|x|)\, dx}.
\end{equation}
This $j$ defines the exact choice of $w$, see \eqref{eq:wfunc}.

\textbf{Step 7. Upper bound for the Rayleigh quotient.}
Finally, we are ready to show that  ${Y}_{2^{\kappa}-1}$ delivers the upper bound $\tau_{2^{\kappa-1},1}$ for $\tau_{2^{\kappa}}(\Omega)$.
Let us take any $u \in Y_{2^{\kappa}-1} \setminus \{0\}$. 
There exists a nonzero vector $(c_{1,1},c_{1,2},\dots,c_{2^{\kappa-1}-1,1},c_{2^{\kappa-1}-1,2},\tilde{c})$ such that
\begin{align*}
	u 
	= 
	c_{1,1} G(r)\sin(\theta)
	+
	c_{1,2} G(r)\cos(\theta)  
	+
	\dots 
	+
	\tilde{c} \,w.
\end{align*}
Because of the $L^2(\Omega)$- and $H_0^1(\Omega)$-orthogonality and the expressions for the norms 
proved in the above steps, and thanks to \eqref{eq:biciai}, \eqref{eq:biciai2}, we have
\begin{align*}
	\frac{\intO |\nabla u|^2 \,dx}{\intO u^2 \,dx}
	&=
	\frac{(c_{1,1}^2+c_{1,2}^2) (B_1+C_1) + \dots + \tilde{c}^2 (\tilde{B}_j+\tilde{C}_j)}{(c_{1,1}^2+c_{1,2}^2) A_1 + \dots + \tilde{c}^2 \tilde{A}_j}
	\\
	&\leq
	\max\left\{\frac{B_1+C_1}{A_1},\dots,  \frac{\tilde{B}_j+\tilde{C}_j}{\tilde{A}_j}\right\} 
	\\
	&\leq
	\frac{\int_\Omega 
		\left((G')^2(|x|) + \frac{4^{\kappa-1} G^2(|x|)}{|x|^2} \right) dx
		+
		h \int_{\partial B_\alpha}
		G^2(|x|)\,dS
	}{\int_\Omega 
		G^2(|x|)\, dx}.
\end{align*}
Applying now Proposition~\ref{prop:bound} with $N=2$ and $l=2^{\kappa-1}$, we derive the desired inequality \eqref{eq:inequtautau1}:
$$
\tau_{2^{\kappa}}(\Omega) 
\leq 
\max_{u \in Y_{2^{\kappa}-1} \setminus \{0\}} 
\frac{\intO |\nabla u|^2 \,dx + h \int_{\partial B_\alpha} u^2 \,dS}{\intO u^2 \,dx}
\leq \tau_{2^{\kappa-1},1}.
$$

\medskip
Let us now discuss the inequality \eqref{eq:SWhigh} for $i < 2^{\kappa}$ assuming $\kappa \geq 2$.
For any odd index $i \in \{3,\dots,2^{\kappa}-1\}$, we consider the following ``truncation'' of the set $Y_{2^{\kappa}-1}$:
\begin{align*}
	{Y}_{i-1} 
	= 
	\text{span}
	\bigg\{ 
	&G(r)\sin(\theta),
	G(r)\cos (\theta),
	\\
	&G(r)\sin(2\theta),
	G(r)\cos (2\theta),
	\dots,
	\\
	&G(r)\sin ((i-1)\theta/2),
	G(r)\cos ((i-1)\theta/2)
	\bigg\},
\end{align*}
where $G$ is a constant extension of a positive eigenfunction $v$  corresponding to $\tau_{\frac{i-1}{2},1}$.
Arguing exactly in the same way as above (and even simpler, since Step~6 is not needed), we obtain
\begin{align*}
\tau_{i}(\Omega)
&\leq
\max_{u \in Y_{i-1} \setminus \{0\}} 
\frac{\intO |\nabla u|^2 \,dx + h \int_{\partial B_\alpha} u^2 \,dS}{\intO u^2 \,dx}
\\
&\leq
\frac{\int_\Omega 
		\left((G')^2(|x|) + \left(\frac{i-1}{2}\right)^2 \frac{G^2(|x|)}{|x|^2} \right) dx
		+
		h \int_{\partial B_\alpha}
		G^2(|x|)\,dS
	}{\int_\Omega 
		G^2(|x|)\, dx}
  \leq 
  \tau_{\frac{i-1}{2},1}.
\end{align*}
Applying Corollary~\ref{cor:multiplicity}, we get
$$
\tau_{i-1}(\Omega) 
\leq 
\tau_{i}(\Omega) \leq \tau_{\frac{i-1}{2},1} 
= 
\tau_{i-1}(B_\beta \setminus \overline{B_\al}) 
= \tau_i(B_\beta \setminus \overline{B_\al}),
$$
which gives the desired inequality \eqref{eq:SWhigh} for $i < 2^{\kappa}$.

It remains to prove \eqref{eq:SWhigh} for $\kappa = 1$.
In this case, we set $Y_1 = \{w\} \equiv \mathbb{R} w$.
Using Step~1, we see that $Y_1$ is admissible for the definition of $\tau_2(\Omega)$. 
Thanks to Step~6 (see \eqref{eq:biciai2}) and Proposition~\ref{prop:bound} with $N=2$ and $l=1$, we get $\tau_2(\Omega) \leq \tau_{1,1}$, which yields \eqref{eq:SWhigh} in view of Corollary~\ref{cor:multiplicity}~\ref{cor:multiplicity:1}.

Finally, the equality case in \eqref{eq:SWhigh} is covered by Proposition~\ref{prop:bound}.
\qed

\subsection{Proof of Theorem~\ref{thm1}}
The proof of Theorem \ref{thm1} goes along the same lines as the proof of \cite[Theorem~1.6]{ABD1} about the Neumann case $h=0$.
Most of the arguments from \cite[Theorem~1.6]{ABD1} either transfer unchanged to the case of mixed boundary conditions or have corresponding counterparts.
Because of this, we will be sketchy. 

In view of the discussion about $\phi_1$ on page~\pageref{sec:proof2},  \textit{all} the integral identities from  \cite[Appendix~B]{ABD1} remain valid by substituting the standard Lebesgue measure $dx$ with the weighted measure  $\phi_1(x)\,dx$.
Let us also explicitly note that, for domains satisfying the assumption~\ref{assumption}, the positivity of a radial function $g$ in $\mathbb{R}^N$ required in \cite[Appendix~B]{ABD1} can be weakened to its positivity only in $\mathbb{R}^N \setminus \overline{B_\alpha}$ with no changes in the proofs.

\ref{thm1:1}
As a particular case of the minimax characterization \eqref{eq:taukdef2} of $\tau_k(\Omega)$, the second eigenvalue  can be defined as
\begin{equation}\label{eq:tau2def}
\tau_2(\Omega) 
= 
\inf
\left\{
\frac{\int_{\Omega} |\nabla u|^2 \, dx + h \int_{\partial B_\alpha} u^2 \,dS}{\int_{\Omega} u^2 \, dx}:~ 
u \in \tilde{H}^1(\Omega),~ 
\int_{\Omega} \phi_1 u \, dx = 0
\right\}.
\end{equation}
Let $\Omega$ be either symmetric of order $2$ or centrally symmetric.
In view of Corollary~\ref{cor:multiplicity} \ref{cor:multiplicity:1}, it is sufficient to show that $\tau_2(\Omega) \leq \tau_{1,1}$ provided  $h \in (-\infty,\bar{h}_2] \cup [h_2,+\infty]$, where we recall that $\tau_{1,1}$ is the first eigenvalue of the SL problem \eqref{eq:ode}-\eqref{eq:bcM} with $l=1$.

Let $v$ be a positive eigenfunction corresponding $\tau_{1,1}$ and 
define the function $G$ as in Proposition~\ref{prop:bound} (cf.\ the proof of Theorem~\ref{thm0}). 
For each $i=1,2,\ldots, N$, consider the function
$x \mapsto \frac{G(|x|)}{|x|}x_i$, $x\in \Omega$, and notice that this function is an element of $\tilde{H}^1(\Omega) \setminus \{0\}$. 
Moreover, since $\Omega$ is assumed to be either symmetric of order $2$ or centrally symmetric, \cite[Propositions~B.1 and B.3]{ABD1} (with the measure $\phi_1(x)\,dx$ instead of $dx$, see the discussion above) yield
\begin{equation}\label{eq:phiG1}
\int_{\Omega} \phi_1(x)\frac{G(|x|)}{|x|} x_i \, dx = 0, \quad i=1,\dots,N.
\end{equation}
Thus, each $\frac{G(|x|)}{|x|}x_i$ is a valid trial function for  \eqref{eq:tau2def}, i.e.,
\begin{equation}\label{eq:tau2i}
\tau_2(\Omega) \int_\Omega \frac{G^2(|x|)}{|x|^2}x_i^2\,dx
\leq
\int_\Omega \left|\nabla \left(
\frac{G(|x|)}{|x|} x_i
\right)\right|^2 \,dx
+
h \int_{\partial B_\alpha}
\frac{G^2(|x|)}{|x|^2}x_i^2\,dS,
\quad
i=1,2,\ldots,N.
\end{equation}
Applying \cite[Remark~B.11 with $g(r)=G(r)/r$ and $p=x_i$]{ABD1} to expand the gradient term, and then
summing the inequalities \eqref{eq:tau2i} over $i$, we get
$$
\tau_2(\Omega) \int_\Omega G^2(|x|) \,dx
\leq
\int_\Omega 
\left(
(G'(|x|))^2 + \frac{(N-1)}{|x|^2} G^2(|x|)
\right)
\,dx
+
h \int_{\partial B_\alpha}
G^2(|x|)\,dS.
$$
Dividing by $\int_\Omega G^2(|x|) \,dx$ and applying Proposition~\ref{prop:bound} with $l=1$, we arrive at the desired inequality $\tau_2(\Omega) \leq \tau_{1,1}$, and hence  \eqref{eq:SWND} holds.

\ref{thm1:2}
First, we discuss the relations \eqref{eq:SWNDx}.
As in the assertion~\ref{thm1:1}, thanks to Corollary~\ref{cor:multiplicity}~\ref{cor:multiplicity:1},  it is sufficient to prove that $\tau_{N+1}(\Omega) \leq \tau_{1,1}$ provided  $h \in (-\infty,\bar{h}_2] \cup [h_2,+\infty]$, when $\Omega$ is symmetric of order $4$.
For this purpose, we consider the following subspace of $\tilde{H}^1(\Omega)$:
$$
{Y}_{N}
=
\text{span}
\left\{
\frac{G(|x|)}{|x|}x_1,
\dots,
\frac{G(|x|)}{|x|}x_{N+1}
\right\}.
$$
It is not hard to see that basis elements of ${Y}_{N}$ are mutually linearly independent. 
Moreover, since the symmetry of order $4$ implies the symmetry of order $2$, basis elements of ${Y}_{N}$ are $L^2(\Omega)$-orthogonal to $\phi_1$, see \eqref{eq:phiG1}.
Therefore, $\text{dim}\,{Y}_{N} = N$ and ${Y}_{N}$ is an admissible subspace for the definition \eqref{eq:taukdef} of $\tau_{N+1}(\Omega)$.

Thanks to the symmetry of order $4$, it can be shown that basis elements of ${Y}_{N}$ are mutually $L^2(\Omega)$- and $H_0^1(\Omega)$-orthogonal (i.e., their gradients are mutually $L^2(\Omega)$-orthogonal), cf.\ \cite[Eq.~(3.6)]{ABD1}.
Moreover, in the same way as in \cite{ABD1}, it can be deduced that
$$
\int_{\partial B_\alpha} 
\left(\frac{G(|x|)}{|x|}x_i\right)
\left(\frac{G(|x|)}{|x|}x_j\right) dS
=
0
\quad \text{for any}~ i \neq j.
$$
Then, arguing as in \cite[Section~3.2]{ABD1}, for any element $u \in {Y}_N \setminus \{0\}$ we get
$$
\frac{\int_{\Omega} |\nabla u|^2 \, dx + h \int_{\partial B_\alpha} u^2 \,dS}{\int_{\Omega} u^2 \, dx}
=
\frac{\int_\Omega 
\left(
(G'(|x|))^2 + \frac{(N-1)}{|x|^2} G^2(|x|)
\right)
\,dx
+
h \int_{\partial B_\alpha}
G^2(|x|)\,dS}{\int_\Omega G^2(|x|) \,dx}.
$$
The characterization \eqref{eq:taukdef} of $\tau_{N+1}(\Omega)$ and Proposition~\ref{prop:bound} with $l=1$ yield $\tau_{N+1}(\Omega) \leq \tau_{1,1}$, which gives \eqref{eq:SWNDx}.

\medskip
Second, we discuss the inequality \eqref{eq:SWNDy}.
Applying Corollary~\ref{cor:multiplicity}~\ref{cor:multiplicity:1}, we see that it is sufficient to justify the inequality $\tau_{N+2}(\Omega) \leq \tau_{2,1}$ provided $h \in (-\infty,\bar{h}_2] \cup [h_2,+\infty]$, when $\Omega$ is symmetric of order $4$.
Let $v$ be a positive eigenfunction of the SL problem \eqref{eq:ode}-\eqref{eq:bcM} and $G$ be a function defined correspondingly as in Proposition~\ref{prop:bound} with $l=2$. 
Consider the subspace
$$
{Y}_{N+1}
=
\text{span}
\left\{
\frac{G(|x|)}{|x|}x_1,
\dots,
\frac{G(|x|)}{|x|}x_{N+1}, w
\right\},
$$
where $w$ is defined via a special linear combination of eigenfunctions corresponding to $\tau_{N+2}(B_\beta \setminus \overline{B_\alpha})$ as
\begin{equation}\label{eq:ww}
w 
= 
\sqrt{2} \sum_{i=1}^{N-1} \sum_{j=i+1}^N  \frac{G(r)}{r^2} x_i x_j
+
\sum_{i=1}^{N-1} \frac{G(r)}{\sqrt{i(i+1)}r^2} \left(\sum_{j=1}^i x_j^2 - i x_{i+1}^2\right).
\end{equation}
Arguing as in \cite[Section~3.3]{ABD1} (see also Remark~\ref{rem:thm} below), it can be shown that 
${Y}_{N+1}$ is an admissible subspace for the definition \eqref{eq:taukdef} of $\tau_{N+2}(\Omega)$ and 
\begin{align}
\notag
\tau_{N+2}(\Omega)
&\leq 
\max_{u \in {Y}_{N+1} \setminus \{0\}}\frac{\int_{\Omega} |\nabla u|^2 \, dx + h \int_{\partial B_\alpha} u^2 \,dS}{\int_{\Omega} u^2 \, dx}
\\
\label{eq:taun+2}
&\leq
\frac{\int_\Omega 
\left(
(G'(|x|))^2 + \frac{2N}{|x|^2} G^2(|x|)
\right) dx
+
h \int_{\partial B_\alpha}
G^2(|x|)\,dS}{\int_\Omega G^2(|x|) \,dx}.
\end{align}
Applying Proposition~\ref{prop:bound} with $l=2$ and Corollary~\ref{cor:multiplicity}~\ref{cor:multiplicity:1}, we conclude that $\tau_{N+2}(\Omega) \leq \tau_{2,1} = \tau_{N+2}(B_\beta \setminus \overline{B_\alpha})$ which is the desired inequality  \eqref{eq:SWNDy}.

The equality cases in 
\eqref{eq:SWND}, 
\eqref{eq:SWNDx}, \eqref{eq:SWNDy}
also follow from Proposition~\ref{prop:bound}.
\qed

\begin{remark}\label{rem:thm}
The proof of the counterpart of \eqref{eq:SWNDy} in the planar case $N=2$ with $h=0$ given in \cite[Section~3.3]{ABD1} 
 contains an imprecision. 
Namely, it is based on  application of \cite[Lemma~B.6]{ABD1}, but
the rotation $T \in SO(2)$, whose existence is stated in this lemma, \textit{depends} on the function $g$ and hence might not be common for both integrals in \cite[Eq.~(B.17)]{ABD1}.
In the proof of Theorem~\ref{thm1-ext} given above, 
we overcome this issue by considering a different (and, in fact, simpler) function $w$ (cf. \eqref{eq:wfunc} and \eqref{eq:ww}). 
\end{remark}

\begin{remark}\label{rem:symmetry}
In order to perform the proof of Theorem~\ref{thm1}~\ref{thm1:1} (or Theorem~\ref{thm1-ext} with $\kappa=1$ and $i=2$), it is sufficient to \textit{assume} that
\begin{equation}\label{eq:assump-sym1}
\int_{\Omega} \phi_1(x)\frac{G(|x|) x_i}{|x|} \, dx = 0, \quad i=1,\dots,N,
\end{equation}
instead of imposing the symmetry assumptions on $\Omega$. 
Similar equalities can replace higher symmetry assumptions from
Theorems~\ref{thm1-ext} and~\ref{thm1}, but they are less constructive, and because of this, we refrain from formulating  Theorems~\ref{thm1-ext} and~\ref{thm1} in their terms.
\end{remark}

\section{Counterexamples}\label{sec:counterexample}

In this section, we discuss several examples of planar domains which do not have the required symmetries to apply Theorem~\ref{thm1-ext} and for which the corresponding inequalities~\eqref{eq:SWhigh} are reversed.  

\subsection{Counterexample to \eqref{eq:SWhigh} with $i=2$ when $\kappa=0$}
For simplicity, we restrict ourselves only to the limiting case $h=+\infty$ corresponding to \eqref{eq:D} with the Dirichlet-Neumann boundary conditions.
We provide a construction based on the consideration of a dumbbell-shaped domain defined as follows, see Figure \ref{fig:0}.
%Let $B_1$ be a unit disk centered at the origin.
Denote by $\Omega_L = \Omega_L(\varepsilon)$ a domain which is a $C^1$-smooth perturbation, governed by a sufficiently small $\varepsilon>0$, of the unit disk $B_1(-1-l+\varepsilon,0)$ centered at the point $(-1-l+\varepsilon,0)$, where $l>0$ is fixed.
We assume that $\Omega_L$ has the following properties:
\begin{enumerate}
	\item $\Omega_L$ is symmetric with respect to the line $y=0$;
	\item $\Omega_L \subset \{(x,y): x<-l\}$ and $|\Omega_L| = |B_1|$;
	\item $\partial \Omega_L \cap \{(x,y): x = -l\}$ is an interval of length $\varepsilon$.
\end{enumerate}
Denote by $\Omega_R = \Omega_R(\varepsilon)$  the reflection of $\Omega_L$ with respect to the line $x=0$.
Let $0<\de<\varepsilon/2$ and let $T_\delta$ be the rectangle $[-l,l] \times (-\delta,\delta)$ connecting $\Omega_L$ with $\Omega_R$.
Finally, we set $\Omega_\delta = \Omega_L \cup T_\delta \cup \Omega_R$.
It is clear from the construction that $\Omega_\delta$ is a centrally symmetric domain.
\begin{figure}[ht!]
	\centering
	\includegraphics[width=0.75\linewidth]{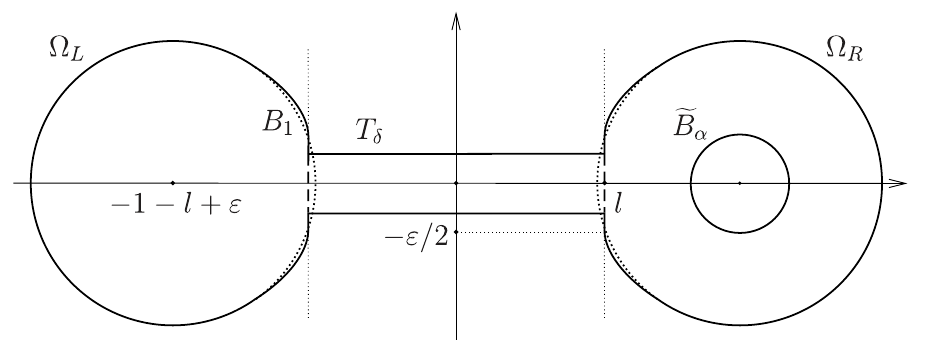}\\
	\caption{The reference domain $\Omega_\delta$ and the disk $\widetilde{B}_{\alpha} = \overline{B_\alpha(1+l-\varepsilon,0)}$ inside it.}
	\label{fig:0}
\end{figure}

Hereinafter, the spectrum of the one-dimensional Dirichlet Laplacian on a segment of length $2l$ will be denoted by $\{\lambda_m(l)\}_{m \in \mathbb{N}}$, and $\{\mu_r(\Omega_L)\}$ will stand for the spectrum of the Neumann Laplacian in $\Omega_L$.

Taking any $\alpha\in (0,1-\varepsilon)$, we set $\widetilde{B}_{\alpha} = \overline{B_\alpha(1+l-\varepsilon,0)}$ and 
consider the mixed eigenvalue problem \eqref{eq:D} in $\Omega_\delta \setminus \widetilde{B}_{\alpha}$ with the Dirichlet boundary condition on $\partial\widetilde{B}_{\alpha}$
and the Neumann boundary condition on $\partial \Omega_\delta$. 
Evidently, $\Omega_\delta \setminus \widetilde{B}_{\alpha}$ is not symmetric of order $2$.
Let  $B_{\beta(\delta)}$ be a disk (centered at the origin) of the same area as $\Omega_\delta$.
In order to obtain a counterexample to the inequality \eqref{eq:SWhigh}, we compare the values of
\begin{equation}\label{eq:counterxm18a}
\tau_2(\Omega_{\delta} \setminus \widetilde{B}_{\alpha}) 
\quad \text{and} \quad 
\tau_2(B_{\beta(\delta)} \setminus \overline{B_{\alpha}})
\end{equation}
for a sufficiently small $\delta$.  
Notice that, by construction, 
\begin{equation}\label{eq:conv2}
|B_{\beta(\delta)} \setminus \overline{B_{\alpha}}| = |\Omega_\delta \setminus \widetilde{B}_{\alpha}| 
= 
|\Omega_L| + |T_\delta|+  |\Omega_R \setminus \widetilde{B}_{\alpha}|  
\to
|B_{1}| + |B_1 \setminus \overline{B_{\alpha}}|
\quad \text{as}~ \delta \to 0,
\end{equation}
which implies that $\beta(\delta) \to \sqrt{2}$ as $\delta \to 0$.
Moreover, it is known (see, e.g.,  \cite[Theorem~2.2]{arrieta} for an explicit reference) that
\begin{equation}\label{eq:counter_conv2}
\tau_k(\Omega_\delta \setminus \widetilde{B}_{\alpha}) \to \widetilde{\tau}_k(\varepsilon)
\quad \text{as}~
\delta \to 0
\end{equation}
for any $k \in \mathbb{N}$, 
where 
\begin{equation}\label{eq:counter_union_2}
\{\widetilde{\tau}_k(\varepsilon)\}_{k \in \mathbb{N}} 
=
\{\mu_r(\Omega_L)\}_{r \in \mathbb{N}} 
\cup 
\{\lambda_m(l)\}_{m \in \mathbb{N}}
\cup
\{\tau_d(\Omega_R \setminus \widetilde{B}_{\alpha})\}_{d \in \mathbb{N}},
\end{equation}
and the set on the right-hand side of \eqref{eq:counter_union_2} is arranged in increasing order (counting multiplicities). 
Noting that $\widetilde\tau_{1}(\varepsilon)=\mu_1(\Omega_L)=0,$ we take $l>0$ small enough so that $\lambda_1(l) > \max\left\{\mu_2(\Omega_L),\tau_1(\Omega_R \setminus \widetilde{B}_\alpha)\right\}$. Therefore, we get
\begin{equation}\label{eq:counter_conv1x}
\widetilde{\tau}_2(\varepsilon) 
=
\min\left\{
\mu_2(\Omega_L),
\tau_1(\Omega_R \setminus \widetilde{B}_\alpha)
\right\}.
\end{equation}
Moreover, recalling that $\Omega_L$ and $\Omega_R$ are sufficiently small $C^1$-smooth perturbations of the unit disk $B_1(-1-l+\varepsilon,0)$, we have
\begin{equation}\label{eq:smoothpert1}
\mu_2(\Omega_L) \to \mu_2(B_{1}) 
\quad 
\text{and}
\quad
\tau_1(\Omega_R \setminus \widetilde{B}_\alpha) 
\to 
\tau_1(B_1 \setminus \overline{B_{\alpha}}) \quad \text {as } \varepsilon \to 0,
\end{equation}
see, e.g., \cite[Section VI.2.6]{courant}.
Thus, by the continuity, the comparison of values of  $\tau_2(\Omega_{\delta} \setminus \widetilde{B}_{\alpha})$ and $\tau_2(B_{\beta(\delta)} \setminus \overline{B_{\alpha}})$ for sufficiently small $\varepsilon$ and $\delta$ is reduced to the comparison of values of
$$
\mu_2(B_1),
\quad
\tau_1(B_1 \setminus \overline{B_{\alpha}}),
\quad
\tau_2(B_{\sqrt{2}} \setminus \overline{B_{\alpha}}).
$$
Each of these values can be computed with arbitrary precision by means of Bessel functions.
In particular, it is known that $\mu_2(B_1) \approx 3.38997$. 
On Figure \ref{fig:4}, we depict the dependence of $\tau_1(B_1 \setminus \overline{B_{\alpha}})$ and $\tau_2(B_{\sqrt{2}} \setminus \overline{B_{\alpha}})$ on $\alpha \in (0,1)$, which shows that 
$$
\min\left\{
\mu_2(B_1),
\tau_1(B_1 \setminus \overline{B_{\alpha}})
\right\}
>
\tau_2(B_{\sqrt{2}} \setminus \overline{B_{\alpha}})
$$
for any $\alpha \in [0.2,0.6]$. 
This means that
$$
\tau_2(\Omega_{\delta} \setminus \widetilde{B}_{\alpha}) 
>
\tau_2(B_{\beta(\delta)} \setminus \overline{B_{\alpha}})
$$
for such $\alpha$ and all sufficiently small $\varepsilon$ and $\delta$, which is a contradiction to \eqref{eq:SWhigh} for $i=2$ when $\kappa=0$.

\begin{figure}[ht!]
	\centering
	\includegraphics[width=0.6\linewidth]{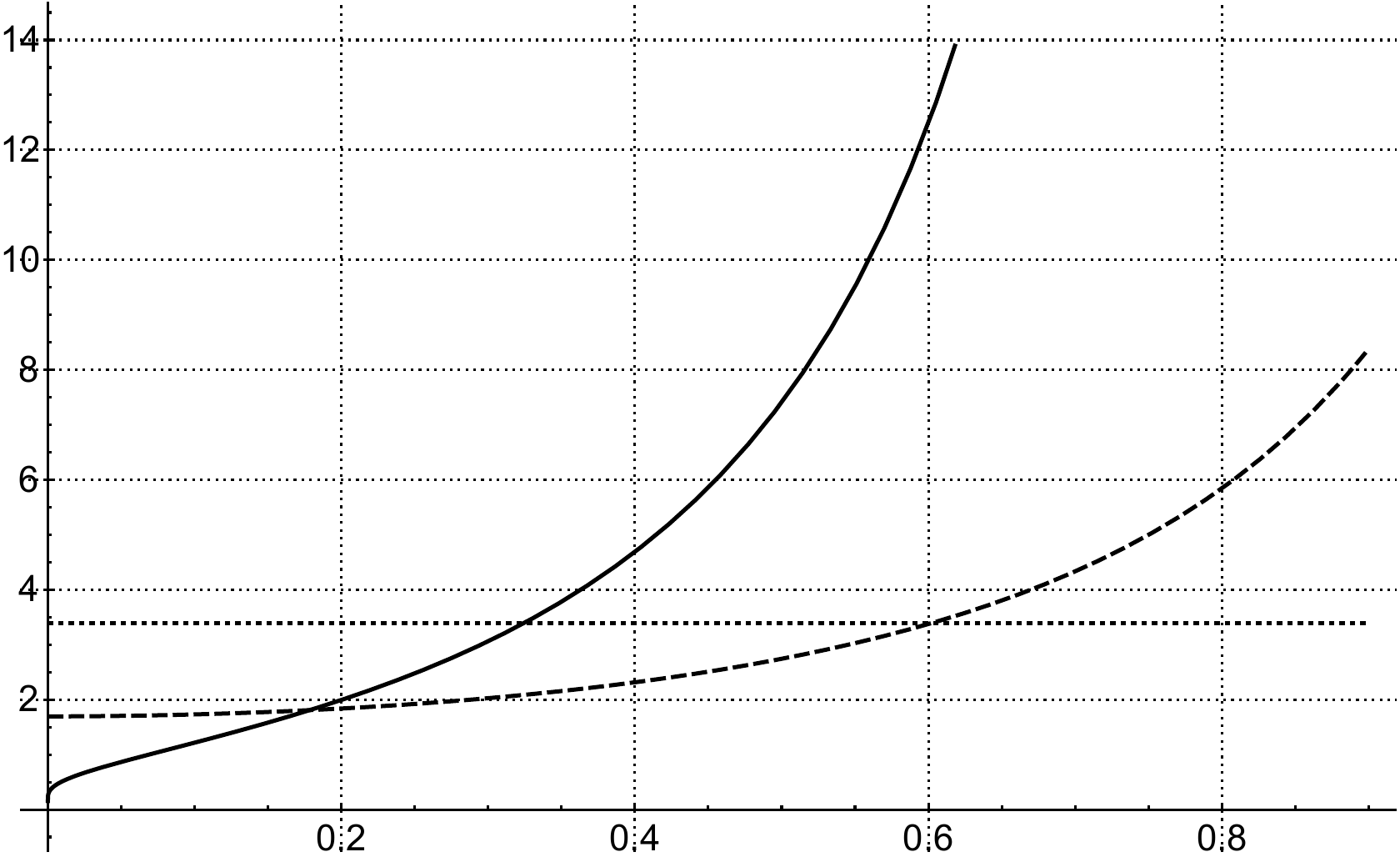}\\
	\caption{The dependence of $\tau_1(B_1 \setminus \overline{B_{\alpha}})$ (solid line) and $\tau_2(B_{\sqrt{2}} \setminus \overline{B_{\alpha}})$ (dashed line) on $\alpha \in (0,1)$;  $\mu_2(B_1) \approx 3.38997$ (dotted line).}
	\label{fig:4}
\end{figure}

\subsection{Counterexamples to \eqref{eq:SWhigh} with $i=3,4$ when $\kappa=1$, and $i=5$ when $\kappa=2$}\label{counter:3}
Assume that $h \in (0,+\infty)$.
We argue in much the same way as in \cite[Sections~4.2 and 4.3]{ABD1}.
Consider a rectangle $\Omega_{a} = \left(-\frac{a}{2},\frac{a}{2}\right)\times \left(-\frac{1}{2a},\frac{1}{2a}\right)$ with $a>0$.
 Clearly, we have $|\Omega_{a}|=1$, and if $a \neq 1$, then $\Omega_a$ is symmetric of order $2$ but not symmetric of order $4$.
 By the classical theory, the spectrum $\{\mu_r(\Omega_{a})\}$ of the Neumann Laplacian in $\Omega_{a}$ is exhausted by eigenvalues $\pi^2 k^2/a^2+\pi^2 m^2 a^2$, where $k,m \in \mathbb{N}_0$.
 It can be computed that 
 \begin{align}
 \label{eq:mu3}
 \mu_3(\Omega_{\sqrt{3}}) = \frac{4\pi^2}{3} \approx 13.1594 > \mu_2(B_{1/\sqrt{\pi}}) \approx 10.6499,\\
 \label{eq:mu4}
 \mu_4(\Omega_{\sqrt{3}}) = 3\pi^2 \approx 29.6088 > \mu_4(B_{1/\sqrt{\pi}}) \approx 29.3059,
 \end{align}
 where $B_{1/\sqrt{\pi}}$ is a disk of radius $1/\sqrt{\pi}$, so that $|B_{1/\sqrt{\pi}}|=1$. 
 On the other hand, it is known that 
 \begin{equation}\label{eq:counter2}
 \tau_k(\Omega \setminus \overline{B_{\alpha}}) \to \mu_k(\Omega)
 \quad \text{as}~ \alpha \to 0
 \end{equation}
 for any $k \in \mathbb{N}$  
 and $h \in (0,+\infty)$, see e.g., \cite[Corollary~2.2]{BS}, and also \cite{BM} for a related discussion in the higher-dimensional case.
 Therefore, combining \eqref{eq:mu3} (reps.~\eqref{eq:mu4}) and \eqref{eq:counter2} (with $\Omega = \Omega_{\sqrt{3}}$ and $\Omega=B_\beta$), we provide a counterexample to \eqref{eq:SWhigh} for $i=2$  (resp.~$i=3$) when $\kappa=1$ for all sufficiently small $\alpha>0$.
 
 Finally, we discuss the inequality  \eqref{eq:SWhigh} for $i=5$ when $\kappa=2$. 
 It is known from \cite[Section~5.4]{hersh2} that 
$$
\mu_5(\Omega_1) = 4 \pi^2 \approx 39.4784
> \mu_4(B_{1/\sqrt{\pi}}) = \mu_5(B_{1/\sqrt{\pi}}) \approx 29.3059,
$$
where we recall that $\Omega_1$ is a square.
Using the convergence \eqref{eq:counter2}, we deduce as above that the inequality \eqref{eq:SWhigh} does not hold, in general, if $\Omega_{\textnormal{out}} \setminus \overline{B_\al}$ has only the symmetry of order $4$.

\section{Final comments and remarks}\label{sec:final}
Let us collect in one place a few remarks that naturally arise in the present work and which might be interesting for readers.

\begin{enumerate}[label={\rm(\arabic*)}]
	\item It deserves further investigation whether the results of Theorems~\ref{thm0}, \ref{thm1-ext}, \ref{thm1} remain valid for domains with ``holes'' of a more general shape, especially in the case $h<0$.
	We refer to \cite{ABD1,AK,DPP,hersh0} for related results in this direction, which are mainly devoted to the case $h \in [0,+\infty]$. 
\item\label{rem:2} It would be interesting to know whether some of the inequalities 
\eqref{eq:SWhigh} (with $\kappa=1,2$),
\eqref{eq:SWND}, \eqref{eq:SWNDx}, \eqref{eq:SWNDy} remain valid for any $h \in \mathbb{R} \cup \{+\infty\}$ and $0<\alpha<\beta$. 
	Note that the restriction on the range of $h$ in Theorems~\ref{thm1-ext},~\ref{thm1} is dictated by the method of the proof which is not directly applicable if the second eigenfunction is radially symmetric (see Remark~\ref{rem:radial} on the possibility of radial  symmetry of the second eigenfunction), and hence the restriction might be merely technical.
\item In continuation of the previous remark, it would be interesting to investigate whether an example as in Remark~\ref{rem:radial} takes place in all dimensions $N \geq 3$. 
	For instance, if, for some $N \geq 3$, $\tau_{1,1} < \tau_{0,2}$ for all $h \in \mathbb{R} \cup \{+\infty\}$ and $0<\alpha<\beta$ (i.e., the second eigenfunction of \eqref{eq:D} in $B_\beta \setminus \overline{B_\alpha}$ is always nonradial and, more precisely, antisymmetric with respect to a central section of $B_\beta \setminus \overline{B_\alpha}$), then the inequality \eqref{eq:SWND} is valid for the same ranges of $h,\alpha$. 
\item Figure~\ref{fig:thm1} represents only a schematic graph, and we do not investigate the behavior of $h_2$ and $\bar{h}_2$ in the present work. 
In particular, it is interesting to know whether the union of graphs of $h_2$ and $\bar{h}_2$ forms a closed curve that is separated from the $h$-axis (in contrast to the behavior depicted in Figure~\ref{fig:thm1}). 
    \item The symmetry of order $2^\kappa$ required in  Theorem~\ref{thm1-ext} is only a sufficient assumption.
	It would be interesting to obtain an inequality as \eqref{eq:SWhigh} for some higher indices under different geometric assumptions on $\Omega$, cf.\ Remark~\ref{rem:symmetry}.
\end{enumerate}

\appendix
\section{}\label{sec:appendix}
In this section, we prove Lemmas~\ref{lem:v} and \ref{lem:n+2} stated in Section~\ref{sec:spectrum}, as well as several auxiliary results. 
Occasionally, we will use the extended notation $\tau_{l,j}(h)$, $\tau_{l,j}(\alpha)$, and $\tau_{l,j}(h,\alpha)$, in order to represent the dependence of an eigenvalue $\tau_{l,j}$ of the SL problem \eqref{eq:ode}-\eqref{eq:bcM} on the parameters  $h \in \mathbb{R} \cup \{+\infty\}$ and $\alpha \in (0,\beta)$ (assuming that $\beta>0$ is fixed). 
We always assume that $N \geq 2$, and that $N,l$ are natural numbers.
However, most of the results from this section remain valid under more general assumptions on $N$ and $l$, since $l(l+N-2)$ is a coefficient in the equation~\eqref{eq:ode}.

We start with the following simple lemma.
\begin{lemma}\label{lem:vneq0}
	Let $h \in \mathbb{R} \cup \{+\infty\}$, $0<\alpha<\beta$, and $l \in \mathbb{N}_0$.
	Let $v$ be an eigenfunction of the SL problem \eqref{eq:ode}-\eqref{eq:bcM}. 
	Then the following assertions are satisfied:
	\begin{enumerate}[label={\rm(\roman*)}]
		\item\label{lem:vneq0:1} 
		If $h<+\infty$, then $v(\alpha) \neq 0$.
		\item \label{lem:vneq0:3}  $v(\beta) \neq 0$. 
  %\tb{to delete?}
  \item\label{lem:vneq0:2} 
		If $h \neq 0$, then $v'(\alpha) \neq 0$.
\end{enumerate}
   In particular, if $v>0$ in $(\al,\ga)$ for some $\ga \in (\al,\beta)$, then 
   \begin{enumerate}[label={\rm(\alph*)}]
       \item\label{lem:vneq0:a} $v(\al)>0$ for $h<+\infty,$
       \item\label{lem:vneq0:b} $v'(\al)>0$ for $0<h\le +\infty,$
       \item\label{lem:vneq0:c} $v'(\al)<0$ for $h<0.$
   \end{enumerate}
\end{lemma}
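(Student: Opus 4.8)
The plan is to reduce everything to the uniqueness theorem for the initial value problem associated with the linear second-order ODE \eqref{eq:ode}. Since $\alpha > 0$, the coefficients of \eqref{eq:ode} are smooth (in particular continuous) on the closed interval $[\alpha,\beta]$, so any solution is uniquely determined by its value and the value of its derivative at a single point; in particular, the eigenfunction $v$ is of class $C^2$ up to both endpoints. Consequently, if $v(r_0) = v'(r_0) = 0$ for some $r_0 \in [\alpha,\beta]$, then $v \equiv 0$, which is impossible because an eigenfunction is nonzero. Each of the assertions (i)--(iii) will follow by showing that the hypothesized vanishing forces exactly such a simultaneous vanishing of $v$ and $v'$ at an endpoint.

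First I would dispatch (i)--(iii). For (ii), the boundary condition at $\beta$ already reads $v'(\beta) = 0$; were $v(\beta) = 0$ as well, uniqueness would give $v \equiv 0$, so $v(\beta) \neq 0$. For (i), suppose $h < +\infty$ and $v(\alpha) = 0$; the Robin condition $-v'(\alpha) + hv(\alpha) = 0$ then yields $v'(\alpha) = hv(\alpha) = 0$, again a contradiction. For (iii), suppose $h \neq 0$ and $v'(\alpha) = 0$: if $h = +\infty$, the Dirichlet condition gives $v(\alpha) = 0$ directly; if $h$ is finite and nonzero, the Robin condition gives $v(\alpha) = v'(\alpha)/h = 0$. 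In either case both $v$ and $v'$ vanish at $\alpha$, which is excluded.

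For the ``in particular'' statements I would combine these non-vanishing facts with the sign information coming from $v > 0$ on $(\alpha,\gamma)$. By continuity $v(\alpha) \geq 0$, so (a) is immediate: when $h < +\infty$, part (i) upgrades this to $v(\alpha) > 0$. For (b) with $0 < h < +\infty$ and for (c) with $h < 0$, part (a) gives $v(\alpha) > 0$, and then the boundary relation $v'(\alpha) = hv(\alpha)$ transfers the sign of $h$ to $v'(\alpha)$, yielding $v'(\alpha) > 0$ and $v'(\alpha) < 0$ respectively. The remaining case is (b) with $h = +\infty$, where $v(\alpha) = 0$: here $v > 0$ on $(\alpha,\gamma)$ forces the derivative $v'(\alpha) \geq 0$, and part (iii) forbids equality, so $v'(\alpha) > 0$.

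I do not anticipate a genuine obstacle here: the whole argument is a direct invocation of ODE uniqueness together with a careful bookkeeping over the cases $h < +\infty$, $h = +\infty$, and $h = 0$. The only points demanding mild care are the treatment of the limiting Dirichlet case $h = +\infty$, where the form of the boundary condition changes, and the passage from the weak inequality $v'(\alpha) \geq 0$ to the strict one in (b), which rests on (iii) rather than on sign considerations alone.
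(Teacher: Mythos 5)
Your proof is correct, but for the core assertions (i)--(iii) it takes a genuinely different route from the paper. You reduce everything to uniqueness for the initial value problem: since $\alpha>0$, the equation \eqref{eq:ode} is a linear second-order ODE with coefficients continuous on $[\alpha,\beta]$, so a solution with $v(r_0)=v'(r_0)=0$ at an endpoint must vanish identically, and each boundary condition converts the hypothesized vanishing into exactly such a double zero. The paper instead argues via the maximum principle: assuming $v(\alpha)=0$ (hence $v'(\alpha)=0$), it normalizes $v\le 0$ near $\alpha$, verifies the appropriate differential inequality separately in the cases $\tau\ge 0$ and $\tau<0$, and invokes the boundary point lemma of Protter and Weinberger to force $v'(\alpha)\neq 0$, a contradiction; the same device handles (ii) and the Dirichlet case of (iii). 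Your argument is more elementary and uniform---no case split on the sign of $\tau$ is needed---but it exploits the one-dimensional structure in an essential way: IVP uniqueness has no analogue for genuinely multi-dimensional eigenvalue problems, whereas the Hopf-type lemma does, which matters given the paper's stated interest in extending these results to ``holes'' of more general shape. Your treatment of (a)--(c) coincides with the paper's: positivity of $v(\alpha)$ from (i), the Robin relation $v'(\alpha)=h\,v(\alpha)$ to transfer the sign of $h$, and, for $h=+\infty$, the observation that $v>0$ on $(\alpha,\gamma)$ with $v(\alpha)=0$ gives $v'(\alpha)\ge 0$, upgraded to strict inequality by (iii). The only step you assert rather than prove is that $v$ is $C^2$ up to both endpoints; this is standard for a regular Sturm--Liouville problem (bootstrap from the weak form \eqref{eq:ode3}, using that $r^{N-1}$ is smooth and bounded away from zero on $[\alpha,\beta]$), and the paper's own proof requires the same pointwise regularity, so this is not a gap.
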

\begin{proof}
    \ref{lem:vneq0:1} Let $h<+\infty$. 
    Suppose, by contradiction, that $v(\alpha)=0$. Then we have $v'(\alpha) = h v(\alpha) = 0$ by the Robin boundary condition. 
    On the other hand, by the standard Sturm--Liouville theory, $v$ changes sign a finite number of times, and hence we may assume that $v\le 0$ 
    in an interval $(\alpha,\gamma)$ for some $\gamma \in (\alpha,\beta)$. 
    Thus, $\alpha$ is a point of maximum of $v$ over $(\alpha,\gamma)$. 
    Moreover, in view of \eqref{eq:ode}, $v$ satisfies 
    $$
    v'' + \frac{N-1}{r} v' -\frac{l(l+N-2)}{r^2} v = -\tau v
     \ge 0 ~\text{ in }~ (\alpha,\gamma), ~\text{ if }~ \tau\ge 0,
    $$
    and
    $$
     v'' + \frac{N-1}{r} v' 
     +
     \left(-\frac{l(l+N-2)}{r^2} +\tau\right) v
    = 0 ~\text{ in }~ (\alpha,\gamma), ~\text{ if }~ \tau <  0.
    $$
    However, these facts contradict the boundary point lemma, see, e.g., \cite[Chapter~1, Theorem~4]{ProtterWeinberger}.

    \ref{lem:vneq0:3}
    Noting that $v'(\beta)=0$ by the Neumann boundary condition, 
    the claim can be proved using the boundary point lemma as in the proof of the assertion~\ref{lem:vneq0:1}.
        
    \ref{lem:vneq0:2} 
    First, we assume that $h=+\infty$. This corresponds to the Dirichlet boundary condition $v(\alpha)=0$, and we apply again \cite[Chapter~1, Theorem~4]{ProtterWeinberger} to deduce that $v'(\alpha) \neq 0$. Now 
     assume that $h < +\infty$.
	By the assertion~\ref{lem:vneq0:1}, we have
     $v(\alpha) \neq 0$, and hence $v'(\alpha) = h v(\alpha) \neq 0$ whenever $h \neq 0$, thanks to the Robin boundary condition. 
     
     Finally, assume that $v>0$ in a right neighborhood of  $\al$. 
     Then \ref{lem:vneq0:a} directly follows from \ref{lem:vneq0:1}.  
     For $h<+\infty,$ \ref{lem:vneq0:b} and \ref{lem:vneq0:c}  follow from the Robin boundary condition and \ref{lem:vneq0:a}.
     For $h=+\infty,$ \ref{lem:vneq0:b}  follows  from \ref{lem:vneq0:2}.
\end{proof}

The following auxiliary lemma is a consequence of, e.g., \cite[Theorem~4.2:1]{KZ} (see also \cite[Lemma~2.11]{AFKen} for a related result), and we provide a proof for clarity.
\begin{lemma}\label{lem:tau-derivative}
	Let $h \in \mathbb{R}$, $0<\alpha<\beta$, $l \in \mathbb{N}_0$, and $j \in \mathbb{N}$.
	Let $v$ be an eigenfunction of the SL problem \eqref{eq:ode}-\eqref{eq:bcM} corresponding to the eigenvalue $\tau_{l,j}(h)$ and normalized as
	\begin{equation}\label{eq:normalization1}
		\int_\alpha^\beta r^{N-1} v^2 \,dr = 1.
	\end{equation}
	Then the mapping $h \mapsto \tau_{l,j}(h)$ is continuously differentiable
	and
	\begin{equation}\label{eq:tau-derivative}
	\frac{d \tau_{l,j}(h)}{d h}
	=
	\alpha^{N-1} v^2(\alpha) > 0.
	\end{equation}
 Moreover,
 \begin{equation}\label{eq:tauincr}
 \tau_{l,j}(h) \nearrow \tau_{l,j}(+\infty) \quad \text{as}~ h \to +\infty,
 \end{equation}
 where $\tau_{l,j}(+\infty)$ is the eigenvalue of the SL problem \eqref{eq:ode}-\eqref{eq:bcM} with the Dirichlet boundary condition at $\alpha$.
\end{lemma}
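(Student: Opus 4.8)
The plan is to split the statement into three parts: the $C^1$-dependence of $\tau_{l,j}(h)$ on $h$, the Hellmann--Feynman formula \eqref{eq:tau-derivative} together with its sign, and the monotone convergence \eqref{eq:tauincr}. First I would establish smoothness in $h$. Writing $a_h(u,w) = \int_\alpha^\beta [u'w' + \frac{l(l+N-2)}{r^2} uw]\, r^{N-1}\,dr + h\alpha^{N-1}u(\alpha)w(\alpha)$ for the bilinear form associated with \eqref{eq:RQ}, I note that $a_h = a_0 + h\, b$, where $b(u,w)=\alpha^{N-1}u(\alpha)w(\alpha)$ is a bounded rank-one form on the fixed form domain $H^1((\alpha,\beta);r^{N-1})$. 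Hence $\{a_h\}$ is a holomorphic family of type (a) in the sense of Kato depending linearly on $h$. Since $\tau_{l,j}(h)$ is a simple eigenvalue by the standard Sturm--Liouville theory (see the discussion around \eqref{eq:row}), analytic perturbation theory yields that both $\tau_{l,j}(h)$ and the normalized eigenfunction $v=v_h$ from \eqref{eq:normalization1} depend real-analytically, and in particular continuously differentiably, on $h$. (Alternatively, one may realise the eigenvalues as simple zeros of the characteristic function built from the solution of \eqref{eq:ode} satisfying the Neumann condition at $\beta$, and apply the implicit function theorem.)

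Next I would compute the derivative by the Hellmann--Feynman argument. Using the normalization \eqref{eq:normalization1}, we have $\tau_{l,j}(h) = a_h(v_h,v_h)$. Differentiating in $h$ and writing $\dot v = \partial_h v_h$, the explicit $h$-dependence of the form contributes $\alpha^{N-1}v^2(\alpha)$, while the eigenfunction variation contributes $2a_h(v_h,\dot v) = 2\tau_{l,j}(h)\int_\alpha^\beta r^{N-1} v_h \dot v\,dr$ by the weak form of the eigenvalue equation tested against $\dot v$. Differentiating \eqref{eq:normalization1} shows $\int_\alpha^\beta r^{N-1} v_h\dot v\,dr = 0$, so the second contribution vanishes and \eqref{eq:tau-derivative} follows. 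The strict positivity is then immediate from Lemma~\ref{lem:vneq0}~\ref{lem:vneq0:1}, which guarantees $v(\alpha)\neq 0$ for $h<+\infty$.

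Finally, for \eqref{eq:tauincr}, the positivity of the derivative makes $h\mapsto\tau_{l,j}(h)$ strictly increasing. An upper bound $\tau_{l,j}(h)\le\tau_{l,j}(+\infty)$ for every finite $h$ comes from the minimax \eqref{eq:taudef}, since Dirichlet eigenfunctions lie in $H^1((\alpha,\beta);r^{N-1})$, vanish at $\alpha$, and hence make the boundary term in $R_l$ disappear; thus the monotone limit $L:=\lim_{h\to+\infty}\tau_{l,j}(h)$ exists and satisfies $L\le\tau_{l,j}(+\infty)$. The harder direction, which I expect to be the main obstacle, is the matching lower bound $L\ge\tau_{l,j}(+\infty)$ with the correct index $j$. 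For this I would take the $L^2$-orthonormal eigenfunctions $v_h^{(1)},\dots,v_h^{(j)}$ for the first $j$ eigenvalues; each has Rayleigh quotient at most $\tau_{l,j}(+\infty)$, so they are bounded in $H^1((\alpha,\beta);r^{N-1})$ and, along a subsequence, converge weakly in $H^1$ and strongly in $L^2$ to functions $v_*^{(1)},\dots,v_*^{(j)}$. These limits remain $L^2$-orthonormal, and since $(v_h^{(i)}(\alpha))^2\le\tau_{l,i}(h)/(h\alpha^{N-1})\to0$, each satisfies $v_*^{(i)}(\alpha)=0$; hence they span a $j$-dimensional subspace of the Dirichlet form domain. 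For any $u=\sum c_i v_*^{(i)}$ in this span, weak lower semicontinuity of the boundary-free quadratic form combined with $a_h(\sum c_i v_h^{(i)},\sum c_i v_h^{(i)})=\sum c_i^2\tau_{l,i}(h)\le\tau_{l,j}(h)\sum c_i^2\le L\,\|u\|_{L^2}^2$ gives $R_l(u)\le L$ for this $u$ with $u(\alpha)=0$. The minimax characterization of $\tau_{l,j}(+\infty)$ over the Dirichlet form domain then forces $\tau_{l,j}(+\infty)\le L$, completing the proof of \eqref{eq:tauincr}.
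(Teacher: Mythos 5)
Your proposal is correct, but it takes a genuinely different route from the paper's proof. The paper does not argue from scratch: for the derivative formula \eqref{eq:tau-derivative} it reparametrizes the Robin condition by the angle $\theta=\cot^{-1}(\alpha^{N-1}h)$ and invokes the differentiability theorem of Kong--Zettl \cite[Theorem~4.2:1]{KZ}, which gives $\frac{d\tau_{l,j}}{d\theta}=-v^2(\alpha)-(\alpha^{N-1}v'(\alpha))^2$; the chain rule and the Robin boundary condition then produce $\alpha^{N-1}v^2(\alpha)$, and positivity follows (as in your argument) from Lemma~\ref{lem:vneq0}. For the limit \eqref{eq:tauincr} the paper simply cites Everitt--M\"oller--Zettl \cite[Theorem~(a):(i)]{EMZ}. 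You instead give self-contained arguments: Kato perturbation theory for the affine family of forms $a_h=a_0+h\,b$, with $b$ a rank-one, infinitesimally form-bounded perturbation, combined with the Hellmann--Feynman identity for \eqref{eq:tau-derivative}; and a variational compactness/lower-semicontinuity argument identifying the monotone limit with the Dirichlet eigenvalue of the \emph{same} index $j$ for \eqref{eq:tauincr} --- this last part is precisely the content of the cited EMZ theorem, and your treatment of it (the $H^1$ bound from nonnegativity of the boundary term, the vanishing trace $v_*^{(i)}(\alpha)=0$ from $(v_h^{(i)}(\alpha))^2\lesssim 1/h$, the preserved $L^2$-orthonormality, and the minimax comparison) is sound. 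What each approach buys: the paper's proof is short but outsources the two key analytic facts to the Sturm--Liouville literature, while yours is longer, self-contained, and actually yields real-analytic dependence on $h$. Two small points to tighten: the family of \emph{forms} is holomorphic of type (B) in Kato's terminology (type (a) refers to operator families), and your differentiation of $a_h(v_h,v_h)$ implicitly uses that $h\mapsto v_h$ is differentiable in the form-domain ($H^1$) topology, not merely in $L^2$; this does follow from the type-(B) theory, but it can be bypassed entirely by the two-point identity $\bigl(\tau_{l,j}(h_1)-\tau_{l,j}(h_2)\bigr)\int_\alpha^\beta r^{N-1}v_{h_1}v_{h_2}\,dr=(h_1-h_2)\,\alpha^{N-1}v_{h_1}(\alpha)v_{h_2}(\alpha)$, obtained by testing each weak eigenvalue equation against the other eigenfunction, which requires only continuity of $h\mapsto v_h$.
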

\begin{proof}
    Let us parameterize the Robin boundary  condition at $\alpha$ as
    $-v'(\alpha)+\frac{\cot(\theta)}{p(\al)}\,v(\alpha)=0$, where $\theta\in (0,\pi)$
    and $p(\al)=\alpha^{N-1}$.
    That is, we set  $h=h(\theta)=\frac{\cot(\theta)}{p(\al)}$, and hence $\theta(h)=\cot^{-1}(p(\alpha)h)$ and
    $$
    \frac{d \theta }{d h}=-\frac{p(\alpha)}{p^2(\alpha)h^2+1}.
    $$
    Since the eigenvalue $\tau_{l,j}$ is simple by the standard Sturm--Liouville theory (see, e.g., \cite[Section~13]{weidmann}), we apply \cite[Theorem~4.2:1]{KZ} to get
    $$
    \frac{d \tau_{l,j}}{d \theta}
    =
    -v^2(\al)-\left(p(\al)v'(\alpha)\right)^2.
    $$ 
    Using now the Robin boundary condition, we obtain
    \begin{align*}
    \frac{d \tau_{l,j}}{d h}
	&=
    \frac{d \tau_{l,j}}{d \theta}\cdot \frac{d \theta }{d h}
    =
    \left(v^2(\al)+\left(p(\al)v'(\alpha)\right)^2\right)\frac{p(\alpha)}{p^2(\alpha)h^2+1}\\
 &=v^2(\al)(1+p^2(\al)h^2)\frac{p(\alpha)}{p^2(\alpha)h^2+1}
 =
 p(\alpha)v^2(\al)
 =
 \al^{N-1}v^2(\al).
 \end{align*}
 This completes the proof of the equality in \eqref{eq:tau-derivative}.
 The inequality in \eqref{eq:tau-derivative} follows from Lemma~\ref{lem:vneq0}~\ref{lem:vneq0:1}.

  Finally, the convergence \eqref{eq:tauincr} follows from \cite[Theorem~(a):(i)]{EMZ}.
\end{proof}

Let us now prove Lemma~\ref{lem:v}.
\begin{proof}[Proof of Lemma \ref{lem:v}]
Throughout the proof, to emphasize the dependence on $h$, we use the notation $v_h$ to denote the positive eigenfunction $v$ of the SL problem \eqref{eq:ode}-\eqref{eq:bcM}.
In particular, $v_h$ satisfies the following equation (see \eqref{eq:ode3}):
\begin{equation}\label{eq:ode3proof1}
(r^{N-1}v_h'(r))'
=
\left(
\frac{l(l+N-2)}{r^2}-\tau_{l,1}(h)
\right)
v_h(r)r^{N-1}, \quad  r\in (\al,\beta).
\end{equation}

The assertion~\ref{eq:long:1} is simple.
It is easy to see from \eqref{eq:taudef-first} with $l=0$ that $\tau_{0,1}(0) = 0$ by considering a nonzero constant trial function. 
Then, noting that the mapping $h \mapsto \tau_{0,1}(h)$ is  increasing by Lemma~\ref{lem:tau-derivative}, 
we get the desired conclusion about the signs of $\tau_{0,1}(h)$.
Integrating now both sides of \eqref{eq:ode3proof1} with $l=0$ over $(r,\beta)$ for $r \in (\alpha,\beta)$, we get
\begin{equation}\label{eq:ode3l02}
\tau_{0,1}(h)\int_r^\beta s^{N-1} v_h \,ds
=
-
\int_r^\beta (s^{N-1}v_h')' \,ds
=
r^{N-1}v_h'(r) 
-
\beta^{N-1}v_h'(\beta)
=
r^{N-1}v_h'(r).
\end{equation}
Thus, we conclude that
$v_h'<0$ for $h<0$, $v_h'=0$ for $h=0$, and $v_h'>0$ for $h>0$.

Let us now discuss the assertion~\ref{eq:long:2}:\ref{eq:long:2:iii}.  
Since  $l \geq 1$, the assertion~\ref{eq:long:1}:\ref{eq:long:1:ii} and the chain of inequalities \eqref{eqn:col} yield $\tau_{l,1}(0)>0$. 
In view of Lemma~\ref{lem:tau-derivative}, the mapping $h \mapsto \tau_{l,1}(h)$ is continuous and increasing.
Therefore, 
we can find $\de>0$ such that 
$\tau_{l,1}(h)>0$ whenever $h>-\de$.
On the other hand, taking a constant trial function in the definition \eqref{eq:taudef-first} of $\tau_{l,1}$, we see that $\tau_{l,1}(h)<0$ when $h$ takes a sufficiently large negative value.
Now setting 
$$
h_1 = \inf\{h < 0:~ \tau_{l,1}(h) > 0\},
$$
we see that $h_1>-\infty$ and it has all the required properties.

To prove the remaining assertions, we
denote, for brevity, $f_h(s) =
\frac{l(l+N-2)}{s^2}-\tau_{l,1}(h)$ for $s\in [\al,\be)$ and remark that $f_h$ is decreasing in $[\al,\be)$.
Integrating  \eqref{eq:ode3proof1} over $(r,\beta)$, we get
\begin{equation}\label{eq:-r=int0}
-r^{N-1} v_h'(r) 
= 
\int_r^\beta 
f_h(s)
v_h(s)s^{N-1} \,ds, \quad r\in [\al,\be).
\end{equation}
Now we make the following three observations: 

\noindent 
\textit{Observation 1}: 
$v_h'<0$ in $(\al,\be)$  for  $h<h_1.$ 
This follows from \eqref{eq:-r=int0} by noting that $\tau_{l,1}(h) <0$ and hence $f_h>0$ for $h<h_1$  (see the assertion \ref{eq:long:2}:\ref{eq:long:2:iii}).
 
 \noindent 
 \textit{Observation 2}: If $v_h'(s_0) \geq 0$  for some $s_0\in [\al,\be)$, then $v_h'(r)>0$ for $r\in (s_0,\be).$ 
 To show this, first, we use \eqref{eq:-r=int0} and conclude that $f_h$ must be nonpositive somewhere in $(s_0,\be).$ 
 Since $f_h$ is decreasing, we can find the smallest real number $s_1\in[\al,\be)$ such that $f_h(s)<0$ for $s> s_1.$  
 Therefore, using  \eqref{eq:-r=int0}, we get $v_h'(r)>0$ for  $r\in [s_1,\be)$.  
 If $s_1 \leq s_0$, then we are done.
 If $s_1>s_0,$ then $f_h(s_1)=0$ and $f_h(s)>0$ for $s\in [\al,s_1)$.
 Thus, for $r\in (s_0,s_1)$ we have
\begin{align}
    -r^{N-1} v_h'(r)=&
\int_r^{s_1} 
f(s)
v_h(s)s^{N-1} \,ds+\int_{s_1}^\beta 
f(s)
v_h(s)s^{N-1} \,ds\\
<&
\int_{s_0}^{s_1} 
f(s)
v_h(s)s^{N-1} \,ds+\int_{s_1}^\beta 
f(s)
v_h(s)s^{N-1} \,ds=-{s_0}^{N-1} v_h'(s_0) \leq 0.
\end{align}
Consequently, $v_h'(r)>0$ for $r\in (s_0,s_1)$ and hence for $r\in (s_0,\beta)$.
 
\noindent 
\textit{Observation 3}: If $v_{h^*}'(r)>0$  for some  $h^*\in (-\infty,+\infty]$ and some $r\in [\al,\be)$, then there exist  $\de>0$ and $s^*\in (\al,\be)$ such that $v_h'(s^*)>0$ for $h\in (h^*-\de,+\infty].$ 
For this, we argue as follows.
Since $v_{h^*}'(r)>0,$ we deduce from \eqref{eq:-r=int0} the existence of $s^*\in(\al,\be)$ such that 
$f_{h^*}(s^*) < 0$.
In view of the continuity and monotonicity of the mapping $h \mapsto \tau_{l,1}(h)$ (see Lemma~\ref{lem:tau-derivative}), we can find $\de>0$ such that 
$f_h(s^*) < 0$
whenever $h\in (h^*-\de,+\infty]$. 
Thus, by the monotonicity of $f_h$, \eqref{eq:-r=int0} yields $v_h'(s^*) > 0$ for $h\in(h^*-\de,+\infty]$.

\smallskip
The assertion~\ref{eq:long:2}:\ref{eq:long:2:i} 
follows from Observation~2 by noting that $v_h'(\alpha) \geq 0$, see Lemma~\ref{lem:vneq0}~\ref{lem:vneq0:b} in the case $h>0$.

Next, we consider the assertion~\ref{eq:long:2}:\ref{eq:long:2:ii}. 
We have $v_h'>0$ in $(\alpha,\beta)$ for $h=0$ 
by the assertion~\ref{eq:long:2}:\ref{eq:long:2:i}.
Therefore, thanks to Observation~3, there exist $\de>0$ and $s^*\in (\al,\be)$ such that $v_h'(s^*)>0$ for $h \in (-\de,+\infty].$ 
On the other hand, we have  $v_h'(\al)<0$ for $h < 0$, see Lemma~\ref{lem:vneq0}~\ref{lem:vneq0:c}.
We conclude that $v_h'$ changes sign in $(\alpha,\beta)$ for $h \in (-\de,0)$.
Now we define
\begin{equation}\label{eq:h0}
h_0
=
\inf\{h<0:~ v_h' \text{ changes sign in } (\al,\be) \}.
\end{equation}
It is easy to see from Observation~1 that $h_0 \geq h_1$. 
Moreover, we conclude from Observation~3 that $v_h'<0$ in $(\alpha,\beta)$ for $h\in(-\infty, h_0]$  and $v_h'$ changes sign in $(\al,\be)$ for $h\in (h_0,0).$
Let us now take any ${h} \in (h_0,0).$  
Using Observation~2,  we see that $v_h'$ changes sign exactly once from negative to positive.
As a consequence,  there exists a unique $\gamma=\gamma(l,h) \in (\alpha,\beta)$ such that $v_h'(r)<0$ for $r \in (\alpha,\gamma)$, $v_h'(\gamma)=0$, and $v_h'(r)>0$ for $r \in (\gamma,\beta)$.

\smallskip
Finally, 
let us prove the inequality \eqref{eq:long}, i.e.,
$$
f_h(r) v_h^2(r) \geq f_h(\beta) v_h^2(\beta),
\quad r \in (\alpha,\beta).
$$
In the simplest case, $l=0$, this inequality reads as
\begin{equation}\label{eq:longl0}
-\tau_{0,1}(h) v_h^2(r)
	\geq
-\tau_{0,1}(h) v_h^2(\beta),
	\quad r \in (\alpha,\beta),
\end{equation}
and its validity directly follows from the assertion~\ref{eq:long:1}.

For $l \geq 1$, we  consider the following three cases: 
$$
(i)~ h\ge 0, 
\qquad 
(ii)~ h_0<h<0, 
\qquad 
(iii)~ h\le h_0,
$$
where $h_0$ is given by the assertion~\ref{eq:long:2}:\ref{eq:long:2:ii}, see \eqref{eq:h0}.

 $(i)$ We have $v_h'>0$ by the assertion~\ref{eq:long:2}:\ref{eq:long:2:i}.
 Thus, we deduce from \eqref{eq:-r=int0} and the monotonicity of $f_h$ that $f_h(\be)<0$. 
 Now, for those $r \in (\alpha,\beta)$ for which $f_h(r)\le 0,$ \eqref{eq:long} holds in view of the monotonicity of $f_h$ and $v_h$. For the remaining values of $r \in (\alpha,\beta)$,  \eqref{eq:long} holds trivially since the left-hand side of \eqref{eq:long} is positive, while the right-hand side is negative.

$(ii)$ By the assertion~\ref{eq:long:2}:\ref{eq:long:2:ii}:\ref{eq:long:2:ii:a}, 
there exists $\ga \in (\alpha,\beta)$ such that $v_h'<0$ in $(\al,\ga),$ $v_h'(\ga)=0$, and $v_h'>0$ in $(\ga,\be).$ 
We deduce from \eqref{eq:-r=int0} and the monotonicity of $f_h$ that $f_h(\be)<0$ and $f_h>0$ in $(\al,\ga).$ 
Thus, we have $f_h^{-1}(-\infty,0)\subset (\ga,\be)$, and hence \eqref{eq:long} follows by the the same arguments as given in $(i)$.

 $(iii)$ We have  $v_h'<0$ by the assertion~\ref{eq:long:2}:\ref{eq:long:2:ii}:\ref{eq:long:2:ii:b}. 
 Thus, we conclude from \eqref{eq:-r=int0} and the monotonicity of $f_h$ that $f_h>0$ in $(\al,\be).$ 
 Therefore, the mapping $r \mapsto f_h(r) v_h^2(r)$ is decreasing in $(\alpha,\beta)$ and we easily obtain \eqref{eq:long}.
\end{proof}

For proving  Lemma~\ref{lem:n+2}, we need to establish some additional properties of eigenvalues and eigenfunctions of the SL problem \eqref{eq:ode}-\eqref{eq:bcM}.

\begin{lemma}\label{lem:tau2>0}
	Let $h \in \mathbb{R} \cup \{+\infty\}$ and $0<\alpha<\beta$.
	Then $\tau_{0,2}>0$.
\end{lemma}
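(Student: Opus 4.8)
The plan is to exploit the nodal structure of the eigenfunction associated with $\tau_{0,2}$. By the Sturm--Liouville theory recalled around \eqref{eq:row}, an eigenfunction $v$ corresponding to $\tau_{0,2}$ is simple and vanishes exactly once in $(\alpha,\beta)$; let $\gamma \in (\alpha,\beta)$ denote this unique zero. The guiding observation is that on the sub-interval $(\gamma,\beta)$ the function $v$ solves the same equation but now sees a \emph{Dirichlet--Neumann} problem: a Dirichlet condition at $\gamma$ because $v(\gamma)=0$, and the original Neumann condition at $\beta$ because $v'(\beta)=0$. Crucially, the Robin parameter $h$ enters the problem only through the boundary condition at $\alpha$, and hence plays no role on $(\gamma,\beta)$. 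Since the relevant quantity will turn out to be a first Dirichlet--Neumann eigenvalue, which is manifestly positive, this yields $\tau_{0,2}>0$ uniformly in $h$.

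Concretely, I would take the equation \eqref{eq:ode3} with $l=0$, namely $-(r^{N-1}v')' = \tau_{0,2}\,r^{N-1}v$, multiply it by $v$, and integrate over $(\gamma,\beta)$. Integration by parts produces the boundary contribution $-[\,r^{N-1}v'v\,]_\gamma^\beta$, which vanishes at both endpoints: at $\beta$ since $v'(\beta)=0$, and at $\gamma$ since $v(\gamma)=0$. This gives the identity
\begin{equation*}
\int_\gamma^\beta r^{N-1}(v')^2\,dr = \tau_{0,2}\int_\gamma^\beta r^{N-1}v^2\,dr .
\end{equation*}

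It then remains only to note that both integrals are strictly positive. Because $\gamma$ is the sole zero of $v$ in $(\alpha,\beta)$, the function $v$ has constant, nonzero sign throughout $(\gamma,\beta)$, so the weighted mass $\int_\gamma^\beta r^{N-1}v^2\,dr$ is positive. For the left-hand side, were $\int_\gamma^\beta r^{N-1}(v')^2\,dr$ to vanish, then $v'\equiv 0$ on $(\gamma,\beta)$ would force $v$ to be constant there, and together with $v(\gamma)=0$ this would give $v\equiv 0$ on $(\gamma,\beta)$, a contradiction. Hence the left-hand side is positive as well, and dividing the displayed identity by $\int_\gamma^\beta r^{N-1}v^2\,dr$ forces $\tau_{0,2}>0$.

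I do not anticipate a genuine obstacle here. The argument is uniform in $h \in \mathbb{R}\cup\{+\infty\}$, since it uses only the interior zero at $\gamma$ and the Neumann condition at $\beta$ — in particular it covers the Dirichlet limit $h=+\infty$ verbatim. The only point requiring a brief justification is the exact nodal count (one interior zero for $j=2$), which is precisely the Sturm--Liouville property quoted after \eqref{eq:row}. One may also note, as a sanity check, that for $h\in[0,+\infty]$ positivity is immediate from the strict ordering $\tau_{0,2}>\tau_{0,1}$ together with $\tau_{0,1}\geq 0$ (Lemma~\ref{lem:v}\,\ref{eq:long:1}); the nodal argument above is what is needed to also settle the delicate range $h<0$, where $\tau_{0,1}<0$.
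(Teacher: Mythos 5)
Your proof is correct and is essentially identical to the paper's own argument: both exploit that the eigenfunction for $\tau_{0,2}$ has exactly one interior zero $\gamma$, multiply the equation $-(r^{N-1}v')'=\tau_{0,2}\,r^{N-1}v$ by $v$, integrate over $(\gamma,\beta)$ with the boundary terms vanishing thanks to $v(\gamma)=0$ and $v'(\beta)=0$, and conclude from the positivity of both integrals. Your extra justification that the gradient integral cannot vanish is a detail the paper leaves implicit, but it is the same proof.
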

\begin{proof}
It is known from the standard Sturm--Liouville theory that an eigenfunction $v$ associated with $\tau_{0,2}$ changes sign in $(\alpha,\beta)$ exactly once, that is, there exists a unique $\gamma \in (\alpha,\beta)$ such that $v(\gamma)=0$.
Since $v$ satisfies \eqref{eq:ode3} with $l=0$ and $\tau=\tau_{0,2}$, i.e., 
\begin{equation}\label{eq:ode3l0}
	-(r^{N-1}v')' = \tau_{0,2} r^{N-1} v, 
	\quad r \in (\alpha,\beta),
\end{equation}
we multiply \eqref{eq:ode3l0} by $v$ and integrate over the interval $(\gamma,\beta)$.
Thanks to the boundary conditions $v(\gamma)=0$ and $v'(\beta)=0$, the integration by parts yields
$$
\int_\gamma^\beta r^{N-1}(v')^2 \,dr = \tau_{0,2} \int_\gamma^\beta r^{N-1}v^2 \,dr.
$$
Since the integrals on both sides are positive, we conclude that $\tau_{0,2}>0$.
\end{proof}

\begin{lemma}\label{lem:compareuv}
    Let $h \in [0, +\infty)$, $0<\alpha<\beta$, and $l \in \mathbb{N}$. 
	%\tb{the case $l=0$?}
	Let $u$ and $v$ be eigenfunctions of the SL problem \eqref{eq:ode}-\eqref{eq:bcM} corresponding to the eigenvalues $\tau_{l,1}$ and $\tau_{0,2}$, respectively, and normalized as follows:
	\begin{equation}\label{eq:normalization0}
	\int_\alpha^\beta r^{N-1} u^2 \,dr = 1
		\quad \text{and} \quad 
	 \int_\alpha^\beta r^{N-1} v^2 \,dr = 1.
	\end{equation}
	If $\tau_{l,1} \leq \tau_{0,2}$, then $0< |u(\alpha)| < |v(\alpha)|$.
\end{lemma}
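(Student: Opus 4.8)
The plan is to renormalize both eigenfunctions to have boundary value $1$ at $\alpha$, to recast the claim as a comparison of weighted $L^2$-norms, and then to run a Wronskian (ratio) monotonicity argument. First, the strict positivity $0<|u(\alpha)|$ and $0<|v(\alpha)|$ is immediate from Lemma~\ref{lem:vneq0}~\ref{lem:vneq0:1}, since $h<+\infty$. I normalize the signs so that $u>0$ in $(\alpha,\beta)$ (a first eigenfunction has no interior zero by \eqref{eq:row}) and $v(\alpha)>0$; then $v$ has a single interior zero $\gamma$, with $v>0$ on $(\alpha,\gamma)$ and $v<0$ on $(\gamma,\beta)$. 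Put $\hat u=u/u(\alpha)$ and $\hat v=v/v(\alpha)$, so that $\hat u(\alpha)=\hat v(\alpha)=1$ and, by the Robin condition, $\hat u'(\alpha)=\hat v'(\alpha)=h$. Because $\int_\alpha^\beta r^{N-1}\hat u^2\,dr=u(\alpha)^{-2}$ and likewise for $\hat v$, the desired inequality $|u(\alpha)|<|v(\alpha)|$ is equivalent to $\int_\alpha^\beta r^{N-1}\hat u^2\,dr>\int_\alpha^\beta r^{N-1}\hat v^2\,dr$.

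I then introduce $W(r)=r^{N-1}\bigl(\hat u'(r)\hat v(r)-\hat u(r)\hat v'(r)\bigr)$. Writing the two equations in the self-adjoint form \eqref{eq:ode3}, a direct computation gives $W'(r)=r^{N-1}\hat u\hat v\,Q(r)$ with $Q(r)=(\tau_{0,2}-\tau_{l,1})+l(l+N-2)/r^2$; since $\tau_{l,1}\le\tau_{0,2}$ and $l\ge1$, we have $Q>0$. The shared boundary data give $W(\alpha)=0$ and, by the Neumann condition, $W(\beta)=0$, and tracking the sign of $\hat u\hat v$ across $\gamma$ yields $W>0$ on $(\alpha,\beta)$. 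Hence $(\hat u/\hat v)'=W/(r^{N-1}\hat v^2)>0$, so on $(\alpha,\gamma)$ the ratio $\hat u/\hat v$ increases from its value $1$ at $\alpha$; therefore $\hat u>\hat v>0$ and the integrand $r^{N-1}(\hat u^2-\hat v^2)$ is strictly positive on $(\alpha,\gamma)$.

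It remains to control $(\gamma,\beta)$, where $\hat v<0<\hat u$. Here I would invoke the shape information: by Lemma~\ref{lem:v}~\ref{eq:long:2}:\ref{eq:long:2:i} we have $\hat u'>0$ (so $\hat u$ increases and $\hat u>1$), while $\tau_{0,2}>0$ (Lemma~\ref{lem:tau2>0}) forces every critical point of $\hat v$ in its negative region to be a strict local minimum, so $\hat v$ is strictly decreasing on $(\gamma,\beta)$ and $|\hat v|$ increases to its maximum $|\hat v(\beta)|$. The same sign of $W$ shows $\hat u/|\hat v|$ is decreasing on $(\gamma,\beta)$, with limit $+\infty$ at $\gamma^+$; consequently $\hat u>|\hat v|$ on $(\gamma,\beta)$ as soon as $\hat u(\beta)\ge|\hat v(\beta)|$, which would give pointwise domination $\hat u\ge|\hat v|$ on all of $(\alpha,\beta)$, hence the norm inequality.

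The hard part will be exactly this endpoint comparison $\hat u(\beta)\ge|\hat v(\beta)|$, equivalently $\hat u(\beta)+\hat v(\beta)\ge0$: ratio monotonicity confines any possible deficit of $\hat u^2-\hat v^2$ to a neighborhood of $\beta$, and one must show it cannot cancel the strictly positive contribution already gained on $(\alpha,\gamma)$. To settle it I plan to combine the monotonicity above with the $L^2$-orthogonality of $\hat v$ to the first radial eigenfunction (the one corresponding to $\tau_{0,1}$, which for $h\ge0$ is nondecreasing by Lemma~\ref{lem:v}~\ref{eq:long:1}) and with the Picone-type identity $\int_\alpha^\beta W^2/(r^{N-1}\hat u^2)\,dr=(\tau_{0,2}-\tau_{l,1})\int_\alpha^\beta r^{N-1}\hat v^2\,dr+l(l+N-2)\int_\alpha^\beta r^{N-3}\hat v^2\,dr$, obtained by testing the equation for $\hat u$ with $\hat v^2/\hat u$ and using $\hat v'-\hat v\hat u'/\hat u=-W/(r^{N-1}\hat u)$; this identity measures the excess bending of $\hat v$ produced by the larger coefficient $Q$ and should bound the near-$\beta$ deficit.
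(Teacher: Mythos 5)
Your setup coincides with the paper's own first half: the normalization $\hat u(\alpha)=\hat v(\alpha)=1$, the Wronskian $W=r^{N-1}(\hat u'\hat v-\hat u\hat v')$ with $W(\alpha)=0$ forced by the shared Robin data, the monotonicity of $W$ from the self-adjoint form \eqref{eq:ode3}, and the conclusion $\hat u>\hat v>0$ on $(\alpha,\gamma)$ are all exactly what the paper does. The problem is that your argument then stops precisely where the real difficulty begins. You correctly reduce the remaining pointwise domination on $(\gamma,\beta)$ to the endpoint inequality $\hat u(\beta)\ge|\hat v(\beta)|$, but you do not prove it: your final paragraph is an announced plan ("I plan to combine\dots", "should bound the near-$\beta$ deficit"), not an argument. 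The Picone-type identity you write down relates global integrals of $W^2$ and $\hat v^2$; it is not clear how it would ever produce a comparison of the two boundary values $\hat u(\beta)$ and $|\hat v(\beta)|$, and the appeal to orthogonality of $\hat v$ against the first radial eigenfunction is likewise left entirely unexploited. As it stands, the proof is incomplete at its crux.

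For comparison, the paper closes this gap with a classical oscillation-amplitude result, the Sonin--Butlewski--P\'olya theorem: since $\hat v$ solves $-(r^{N-1}\hat v')'=\tau_{0,2}\,r^{N-1}\hat v$ with $\tau_{0,2}>0$ (Lemma~\ref{lem:tau2>0}), the product of the two coefficients is increasing, so successive amplitudes of $|\hat v|$ strictly decrease; applied between the interior critical point $\delta\in[\alpha,\gamma)$ where $\hat v'(\delta)=0$, $\hat v(\delta)>0$, and the Neumann endpoint $\beta$, this gives $\hat v(\delta)>|\hat v(\beta)|$, and in fact $\hat v(\delta)>|\hat v(r)|$ for all $r\in(\delta,\beta)$. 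Combining this with the monotonicity $\hat u'>0$ (Lemma~\ref{lem:v}~\ref{eq:long:2}:\ref{eq:long:2:i}, where the hypothesis $h\ge 0$ enters) and with $\hat u(\delta)>\hat v(\delta)$ from the Wronskian step yields $\hat u(r)>\hat u(\delta)>\hat v(\delta)>|\hat v(r)|$ on $(\delta,\beta)$, hence $\hat u>|\hat v|$ on all of $(\alpha,\beta)$ and the norm inequality follows. If you want to complete your write-up, replacing your speculative Picone step by this amplitude comparison (or by any equivalent argument that pins down $\hat v(\delta)>|\hat v(\beta)|$) is what is needed; note also that your one-line claim that $\hat v$ is strictly decreasing on $(\gamma,\beta)$ needs the extra observation that a strict local minimum of $\hat v$ in the negative region would force $\hat v'>0$ up to $\beta$, contradicting the Neumann condition.
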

\begin{proof}
	We know from Lemma~\ref{lem:vneq0}~\ref{lem:vneq0:1} that $u(\alpha) \neq 0$ and $v(\alpha) \neq 0$. 
	Consider the following normalized functions:
	\begin{equation}\label{eq:normal11}
	\bar{u} = \frac{u}{u(\al)}
	\quad \text{and} \quad 
	\bar{v} = \frac{v}{v(\al)}.
	\end{equation}
Thus, $\bar{u}(\alpha)= \bar{v}(\alpha) = 1$. 
We see that $\bar{u}$ is positive in $(\al,\be)$ since $\bar{u}$ is the first eigenfunction.
	Moreover, since $\bar{v}$ is the second eigenfunction, it changes sign exactly once. 
Hence, 
there exists $\sigma \in (\alpha,\beta)$ such that $\bar{v}(r)>0$ for $r\in (\alpha,\sigma)$.  
	
	Recall that $\bar{u}$ and $\bar{v}$ satisfy
	\begin{equation}\label{eq:eqs21}
		-(r^{N-1} \bar{u}')' + l(l+N-2)r^{N-3} \bar{u} = \tau_{l,1} \, r^{N-1} \bar{u}
		\quad \text{and} \quad
		-(r^{N-1} \bar{v}')' = \tau_{0,2} \, r^{N-1} \bar{v},
	\end{equation}
	respectively (see \eqref{eq:ode3}). 
	Multiplying the first equation by $\bar{v}$ and the second equation by $\bar{u}$, and then subtracting one equation from another, we obtain
	\begin{align*}
		-(r^{N-1} \bar{u}')' \bar{v} &+ (r^{N-1} \bar{v}')' \bar{u}\\
		&\equiv
		-(r^{N-1}(\bar{u}' \bar{v} - \bar{u}\bar{v}'))' 
		=
		(\tau_{l,1} - \tau_{0,2}) r^{N-1} \bar{u} \bar{v} - l(l+N-2)r^{N-3} \bar{u}\bar{v}.
	\end{align*}
	Our assumption $\tau_{l,1} \leq  \tau_{0,2}$ and the sign properties of $\bar{u}$ and $\bar{v}$ imply that $r^{N-1}(\bar{u}' \bar{v} - \bar{u}\bar{v}')$ is increasing in $(\alpha,\sigma)$. 
	This yields
	\begin{align*}
		r^{N-1} \bar{v}^2 \left(\frac{\bar{u}}{\bar{v}}\right)' 
		\equiv 
		r^{N-1}(\bar{u}' \bar{v} - \bar{u}\bar{v}') 
		&> \alpha^{N-1} (\bar{u}'(\alpha) \bar{v}(\alpha) - \bar{u}(\alpha)\bar{v}'(\alpha))
		=0
		%\alpha^{N-1} (h \bar{u}(\alpha) \bar{v}(\alpha) - \bar{u}(\alpha) h\bar{v}(\alpha))
		%=0
	\end{align*}
	for any $r \in (\alpha,\sigma)$, thanks to the boundary conditions.
	Thus, the ratio ${\bar{u}}/{\bar{v}}$ is also  increasing in $(\alpha,\sigma)$. 
Since $\bar{u}(\alpha)/\bar{v}(\alpha)=1$, we deduce that 
\begin{equation}\label{eq:uv>0bar}
\bar{u}(r) > \bar{v}(r) >0 
\quad \text{for}~ r\in (\al,\sigma),
\end{equation}
see Figure~\ref{fig:lemA4}, 
cf.~\cite{M} for a related result.

\begin{figure}[ht!]
	\centering
	\includegraphics[width=0.8\linewidth]{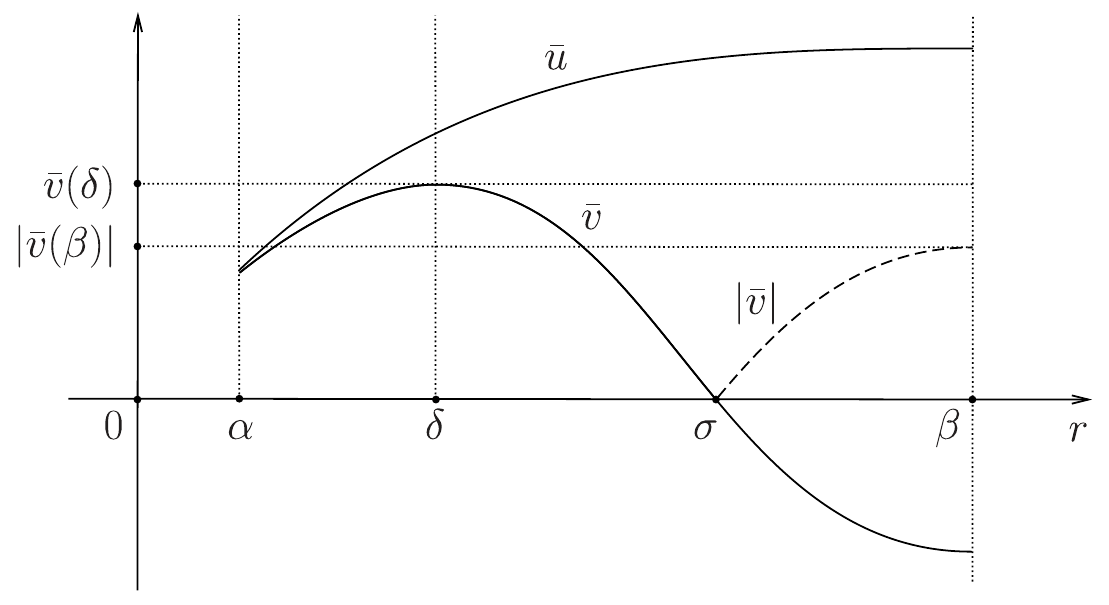}\\
	\caption{A schematic plot of $\bar{u}$ and $\bar{v}$ assuming $h>0$.}
	\label{fig:lemA4}
\end{figure}

Next, we show that  $\bar{u}>|\bar{v}|$ in the whole interval $(\alpha,\beta)$.
Noting that $\bar{v}'(\alpha)=h \geq 0$ and $\bar{v}$ is sign-changing, we get the existence of $\delta \in [\alpha,\sigma)$ such that  
  $\bar{v}'(\delta)=0$ (and $\bar{v}(\delta)>0$).
 Consequently,  recalling that $\bar{v}$ changes sign exactly once, we deduce that $\bar{v}$ is a second eigenfunction of the SL problem for the equation \eqref{eq:ode} in $(\delta,\beta)$ with the Neumann boundary conditions $\bar{v}'(\delta)=\bar{v}'(\beta)=0$.
Therefore,  \cite[Lemma~A.1]{ABD1} gives $\bar{v}'<0$ in $(\delta,\beta)$.
In particular, we see that $|\bar{v}(r)| < \max\{\bar{v}(\delta), |\bar{v}(\beta)|\}$ for $r \in (\delta,\beta)$.
At the same time, recalling that $\bar{v}$ satisfies the second equation in \eqref{eq:eqs21} and noting that $\tau_{0,2}>0$ by Lemma~\ref{lem:tau2>0},
	we apply the Sonin-Butlewski-P\'olya theorem (see, e.g., \cite[Section~7.31, p.~166]{S}) to deduce that $\bar{v}(\delta) > |\bar{v}(\beta)|$, which yields $\bar{v}(\delta) > |\bar{v}(r)|$ for $r \in (\delta,\beta)$.
 On the other hand, $\bar{u}$ is  increasing in $(\al,\be)$, see Lemma~\ref{lem:v}~\ref{eq:long:2}:\ref{eq:long:2:i}.
Combining these facts and \eqref{eq:uv>0bar}, we have
$$
	\bar{u}(r) > \bar{u}(\delta) > \bar{v}(\delta) > |\bar{v}(r)| 
 \quad 
 \text{for}~
 r \in (\delta,\beta),
	$$
which yields $\bar{u}>|\bar{v}|$ in $(\alpha,\beta)$, see Figure~\ref{fig:lemA4}.
In view of \eqref{eq:normal11}, this inequality implies that
    \begin{equation}\label{eq:inequa1111}
	u^2(r) > \left(\frac{u(\al)}{v(\al)} \, v(r)\right)^2
	\quad 
 \text{for}~
 r \in (\alpha,\beta).
	\end{equation}
	Using now the normalization \eqref{eq:normalization0}, we arrive at
	\begin{equation}\label{eq:intu>intv1}
		1=\int_\alpha^\beta r^{N-1} {u}^2 \, dr >
		\int_\alpha^\beta r^{N-1} \left(\frac{u(\al)}{v(\al)} \, v\right)^2 \, dr=\left(\frac{u(\al)}{v(\al)}\right)^2,
	\end{equation}
	which gives the desired conclusion $0<|u(\alpha)| < |v(\alpha)|$.
\end{proof}

In the following lemma, we write $\tau_{l,1}=\tau_{l,1}(h)$ and $\tau_{0,2}=\tau_{0,2}(h)$, in order to represent the dependence of eigenvalues on $h$.
\begin{lemma}\label{lem:tau}
	Let $0<\alpha<\beta$ and $l \in \mathbb{N}$. 
 %l=0???
	If $\tau_{l,1}(\hat{h}) \leq \tau_{0,2}(\hat{h})$ for some $\hat{h} \in [0,+\infty)$, then the inequality
	\begin{equation}\label{eq:muj1<mu02y}
		\tau_{l,1}(h)<\tau_{0,2}(h)
	\end{equation}
	is satisfied for any $h \in (\hat{h},+\infty]$.
\end{lemma}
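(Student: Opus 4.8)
The plan is to treat $h \mapsto \tau_{l,1}(h)$ and $h \mapsto \tau_{0,2}(h)$ as continuously differentiable functions on $[0,+\infty)$ (as furnished by Lemma~\ref{lem:tau-derivative}) and to track the difference $D(h) = \tau_{0,2}(h) - \tau_{l,1}(h)$, showing that it stays strictly positive for $h > \hat{h}$. The crucial observation is that the condition $D(h) \geq 0$ (i.e., $\tau_{l,1}(h) \leq \tau_{0,2}(h)$) is precisely the hypothesis needed to invoke the comparison Lemma~\ref{lem:compareuv}, which in turn pins down the sign of $D'(h)$ in the favorable direction. This feedback structure—the inequality one wishes to propagate is exactly the one enabling the derivative comparison—is the heart of the matter, and it is really the only subtlety; everything else is a standard continuity argument.

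First I would record the derivative of $D$. Normalizing the eigenfunctions $u$ (for $\tau_{l,1}$) and $v$ (for $\tau_{0,2}$) as in \eqref{eq:normalization0}, Lemma~\ref{lem:tau-derivative} gives $\tau_{l,1}'(h) = \alpha^{N-1} u^2(\alpha)$ and $\tau_{0,2}'(h) = \alpha^{N-1} v^2(\alpha)$, so that
$$
D'(h) = \alpha^{N-1}\left(v^2(\alpha) - u^2(\alpha)\right).
$$
At any finite $h \geq 0$ with $\tau_{l,1}(h) \leq \tau_{0,2}(h)$, Lemma~\ref{lem:compareuv} yields $0 < |u(\alpha)| < |v(\alpha)|$, whence $D'(h) > 0$. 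In words: wherever $D$ is nonnegative, it is strictly increasing. Since $\hat{h} \geq 0$ and we look only at $h > \hat{h}$, all finite arguments in play lie in $[0,+\infty)$, so the comparison lemma is always applicable.

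Next I would run a first-crossing argument on the finite range $(\hat{h},+\infty)$. Since $D(\hat{h}) \geq 0$ by hypothesis, either $D(\hat{h}) > 0$, or $D(\hat{h}) = 0$ and then $D'(\hat{h}) > 0$ by the previous step; in both cases $D > 0$ on a right neighborhood of $\hat{h}$. Suppose, for contradiction, that the set $\{h > \hat{h} : D(h) \leq 0\}$ is nonempty, and let $h^*$ be its infimum; by continuity $h^* \in (\hat{h},+\infty)$, we have $D > 0$ on $(\hat{h},h^*)$, and $D(h^*) = 0$. On the one hand, $D$ decreasing to $0$ at $h^*$ forces $D'(h^*) \leq 0$; on the other hand, $D(h^*) = 0$ means $\tau_{l,1}(h^*) \leq \tau_{0,2}(h^*)$, so the derivative formula above gives $D'(h^*) > 0$—a contradiction. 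Hence $D(h) > 0$ for every finite $h \in (\hat{h},+\infty)$.

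Finally, to cover the Dirichlet endpoint $h = +\infty$, I would use the monotone convergences $\tau_{l,1}(h) \nearrow \tau_{l,1}(+\infty)$ and $\tau_{0,2}(h) \nearrow \tau_{0,2}(+\infty)$ from \eqref{eq:tauincr}. Since $D$ is positive and (by the derivative sign) strictly increasing on $(\hat{h},+\infty)$, passing to the limit gives
$$
\tau_{0,2}(+\infty) - \tau_{l,1}(+\infty) = \lim_{h \to +\infty} D(h) \geq D(h_0) > 0
$$
for any fixed $h_0 \in (\hat{h},+\infty)$, which establishes \eqref{eq:muj1<mu02y} also at $h = +\infty$ and completes the proof. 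I do not expect any genuine obstacle beyond carefully justifying the strict inequality $D'(h^*) > 0$ at the putative crossing point, which is exactly what Lemma~\ref{lem:compareuv} delivers under the equality $\tau_{l,1}(h^*) = \tau_{0,2}(h^*)$.
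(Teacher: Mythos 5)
Your proof is correct and follows essentially the same route as the paper: both track the difference $\tau_{0,2}(h)-\tau_{l,1}(h)$, use the derivative formula from Lemma~\ref{lem:tau-derivative} together with the comparison Lemma~\ref{lem:compareuv} to show this difference is strictly increasing wherever it is nonnegative, and propagate positivity to all $h\in(\hat{h},+\infty]$. Your write-up merely makes explicit two points the paper leaves implicit—the first-crossing contradiction and the treatment of the endpoint $h=+\infty$ via the monotone convergence \eqref{eq:tauincr}—which is a welcome sharpening but not a different argument.
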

\begin{proof}
    Let us consider the function 
	\begin{equation}\label{eq:fh}
		f(h)=\tau_{0,2}(h)-\tau_{l,1}(h), \quad h \in \mathbb{R}.
	\end{equation}
       This function is continuously differentiable in view of Lemma~\ref{lem:tau-derivative}.
Denote by $u$ and $v$ eigenfunctions of the SL problem \eqref{eq:ode}-\eqref{eq:bcM} corresponding to the eigenvalues $\tau_{l,1}(h)$ and $\tau_{0,2}(h)$, respectively, and normalized as in \eqref{eq:normalization1}.
	Thanks to Lemma~\ref{lem:tau-derivative}, we have 
	\begin{equation}\label{eq:tau-derivative2}
	\frac{d \tau_{l,1}(h)}{d h}
	=
	\alpha^{N-1} u^2(\alpha)
	\quad 
	\text{and}
	\quad
	\frac{d \tau_{0,2}(h)}{d h}
	=
	\alpha^{N-1} v^2(\alpha),
	\end{equation}
and hence
$$
f'(h) = \alpha^{N-1} (v^2(\alpha) - u^2(\alpha)).
$$
Thus, we infer the following implication from Lemma~\ref{lem:compareuv}: if $f(\tilde{h})\geq 0$ for some $\tilde{h} \in [0,+\infty)$, then $f'(\tilde{h}) > 0$, and hence $f' > 0$ in a neighborhood of $\tilde{h}$. 
This local monotonicity ensures that
$f(h)>0$ for any $h \in (\tilde{h},+\infty]$. Therefore, as $f(\hat{h})\ge 0$ by the assumption,  we easily conclude that $\tau_{l,1}(h)<\tau_{0,2}(h)$ for any $h\in (\hat{h},+\infty].$
\end{proof}

Now we are ready to prove Lemma \ref{lem:n+2}.
\begin{proof}[Proof of Lemma \ref{lem:n+2}]
Let $\beta>0$ be fixed.
    We first prove the chain of inequalities \eqref{eq:muj1<mu02}.
In view of \eqref{eqn:col}, it is sufficient to show that
\begin{equation}\label{eq:muj1<mu02x}
\tau_{l,1}(h,\alpha)<\tau_{0,2}(h,\alpha).
\end{equation}
We start with the existence of $\bar{h}_l^*$. 
We know from Lemma~\ref{lem:v}~\ref{eq:long:2}:\ref{eq:long:2:iii} that $\tau_{l,1}<0$ provided $h<h_1<0$ (note that $h_1$ depends on $\alpha$, $\beta$, $l$), while $\tau_{0,2}$ is always positive by Lemma~\ref{lem:tau2>0}.
Therefore, \eqref{eq:muj1<mu02x} holds at least for any $h<h_1<0$. 
Now we consider
$$
h^*=
\sup\{
\tilde{h} \in \mathbb{R}:~ 
\tau_{l,1}(h,\alpha)<\tau_{0,2}(h,\alpha) ~\text{for any}~ h \leq \tilde{h}
\}. 
$$
If $h^*<+\infty$, then we set $\bar{h}_l^* = h^*-\varepsilon$ for some $\varepsilon>0$, while if $h^* = +\infty$, then we set $\bar{h}_l^* = h^*$.
Note that in the latter case we also have $\tau_{l,1}(+\infty,\alpha)<\tau_{0,2}(+\infty,\alpha)$, which follows from Lemma~\ref{lem:tau}.

\begin{figure}[ht!]
	\centering
	\includegraphics[width=0.7\linewidth]{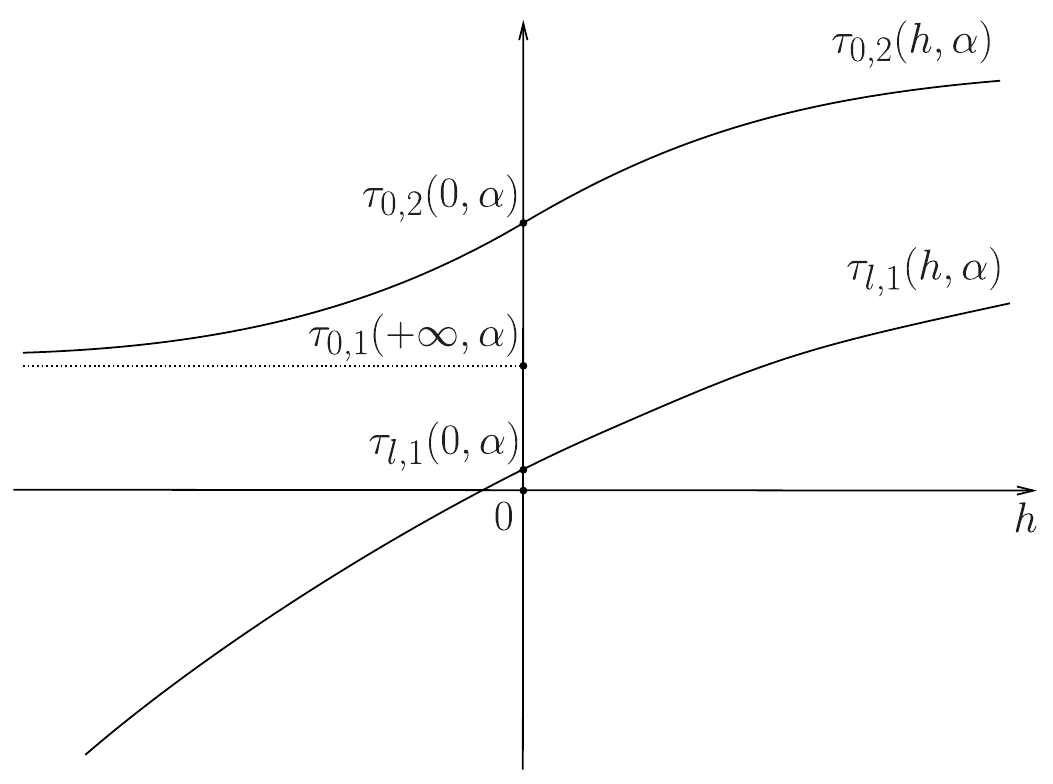}\\
	\caption{A schematic dependence of $\tau_{l,1}(h,\alpha)$ and $\tau_{0,2}(h,\alpha)$ on $h$.}
	\label{fig:proof2}
\end{figure}

Let us now prove the existence of $\alpha_l^* \in (0,\beta)$ such that \eqref{eq:muj1<mu02x} is satisfied for any $h \in \mathbb{R} \cup \{+\infty\}$ and $\alpha \in [\alpha_l^*,\beta)$, i.e., that $\bar{h}_l^* = +\infty$ 
for any $\alpha \in [\alpha_l,\beta)$.	
	Using \cite[Theorem~(a):(iii)]{EMZ} (for the convergence) and Lemma~\ref{lem:tau-derivative} (for the  monotonicity), we have
	$$
	\tau_{0,2}(h,\alpha) \searrow \tau_{0,1}(+\infty,\alpha)
	\quad \text{as}~ h \to -\infty.
	$$
Assume, for a moment, that 	\begin{equation}\label{eq:taul1<tau01}
	\tau_{l,1}(0,\alpha) <
	\tau_{0,1}(+\infty,\alpha)
	\end{equation}
	for some $\alpha \in (0,\beta)$, see Figure~\ref{fig:proof2}.
Noting that  $\tau_{l,1}(h,\alpha)$ is also increasing with respect to $h \in \mathbb{R}$, we deduce that
	$$
	\tau_{l,1}(h,\alpha)
	<
	\tau_{l,1}(0,\alpha) 
	<
	\tau_{0,1}(+\infty,\alpha)
	<
	\tau_{0,2}(h,\alpha)
	$$
	for any $h \in (-\infty,0)$.
 Lemma~\ref{lem:tau} implies then that $\tau_{l,1}(h,\alpha) < \tau_{0,2}(h,\alpha)$ also for any $h \in [0,+\infty]$.
 Thus, it remains to justify the validity of  \eqref{eq:taul1<tau01} for any $\alpha$  sufficiently close to $\beta$. 
	On one hand, it is known from, e.g., \cite[Corollary~3.12~(ii)]{KWZ2} (with a simple change of variables) that $\tau_{l,1}(0,\alpha)$ is bounded as $\alpha \to \beta-$.
	On the other hand, $\tau_{0,1}(+\infty,\alpha) \to +\infty$
	as $\alpha \to \beta-$, see, e.g., \cite[Corollary~2.5]{KWZ2}.
	Therefore, 
 there exists $\alpha_l^* \in (0,\beta)$ such that 
 \eqref{eq:taul1<tau01} (and hence \eqref{eq:muj1<mu02x}) is satisfied for any $\alpha \in [\alpha_l^*,\beta)$.

Finally, let us prove 
that for every $\alpha \in (0,\beta)$ there exists $h_2^*<0$ such that the chain of inequalities \eqref{eq:mu11<mu21<mu02} 
holds true for any $h \in [h_2^*,+\infty]$.
Again in view of \eqref{eqn:col}, it is sufficient to show that
\begin{equation}\label{eq:muj1<mu02xy}
\tau_{2,1}(h,\alpha)<\tau_{0,2}(h,\alpha).
\end{equation}
Since this inequality holds for $h=0$ (see  \cite[Lemma~2.3]{ABD1}),
for every $\alpha \in (0,\beta)$ there exists $h_2^* < 0$ such that \eqref{eq:muj1<mu02xy} remains valid for any $h \in [h_2^*,-h_2^*]$ and hence for any $h \in [h_2^*,+\infty]$ by  Lemma~\ref{lem:tau}.
\end{proof}

	\section{}\label{sec:appendix2}

For the sake of completeness, we justify that the definition
\eqref{eq:taukdef} describes the same eigenvalue $\tau_{k}(\Omega)$ as the original definition \eqref{eq:tauk}:
\begin{equation}\label{eq:tauk2}
	\tag{\ref{eq:tauk}}
	\tau_{k}(\Omega) 
	= 
	\min_{X\in\mathcal{X}_k} 
	\max_{u \in X\setminus \{0\}} \frac{\int_{\Omega} |\nabla u|^2 \, dx + h \int_{\partial B_\alpha} u^2 \,dS}{\int_{\Omega} u^2 \, dx},
	\quad
	k \geq 2,
\end{equation} 
where $\mathcal{X}_k$ is the collection of all $k$-dimensional subspaces of the Sobolev space of $\tilde{H}^1$, see \eqref{eq:sobolev}.
Denote 
\begin{equation}\label{eq:taukdef2}
	\tag{\ref{eq:taukdef}}
	\tilde{\tau}_{k}(\Omega) 
	= 
	\min_{Y\in\mathcal{Y}_{k-1}} 
	\max_{u \in Y\setminus \{0\}} \frac{\int_{\Omega} |\nabla u|^2 \, dx + h \int_{\partial B_\alpha} u^2 \,dS}{\int_{\Omega} u^2 \, dx},
	\quad
	k \geq 2,
\end{equation} 
where $\mathcal{Y}_{k-1}$ is the collection of all $(k-1)$-dimensional subspaces of $\tilde{H}^1(\Omega)$ which are $L^2(\Omega)$-orthogonal to $\mathbb{R} \phi_1$. 

\begin{lemma}\label{lem:tau=tautilde}
Let $h \in \mathbb{R} \cup \{+\infty\}$ and $\Omega = \Omega_{\textnormal{out}} \setminus \overline{B_\alpha}$ satisfy the assumption \ref{assumption}.
Then
	$\tau_{k}(\Omega) = \tilde{\tau}_{k}(\Omega)$ for any $k \geq 2$.
\end{lemma}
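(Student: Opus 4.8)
The plan is to establish the two inequalities $\tau_k(\Omega) \le \tilde{\tau}_k(\Omega)$ and $\tilde{\tau}_k(\Omega) \le \tau_k(\Omega)$ separately, the whole argument hinging on the fact that $\phi_1$ is an eigenfunction. I would write $\mathcal{Q}(u,w) = \int_\Omega \nabla u \cdot \nabla w \, dx + h \int_{\partial B_\alpha} uw \, dS$ for the symmetric bilinear form attached to the numerator of the Rayleigh quotient $\mathcal{R}(u) = \mathcal{Q}(u,u)/\int_\Omega u^2\,dx$, and recall the weak formulation of \eqref{eq:D} for the first eigenpair: $\mathcal{Q}(\phi_1, w) = \tau_1(\Omega) \int_\Omega \phi_1 w \, dx$ for every $w \in \tilde{H}^1(\Omega)$. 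The single crucial consequence I would extract is that $\mathcal{Q}(\phi_1, w) = 0$ whenever $w$ is $L^2(\Omega)$-orthogonal to $\phi_1$; that is, $\phi_1$ is orthogonal to its $L^2(\Omega)$-complement not only in $L^2(\Omega)$ but also with respect to $\mathcal{Q}$.

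For $\tilde{\tau}_k(\Omega) \le \tau_k(\Omega)$, I would take an arbitrary $X \in \mathcal{X}_k$ and form the kernel in $X$ of the linear functional $u \mapsto \int_\Omega \phi_1 u \, dx$. This kernel has dimension $k$ or $k-1$; in either case $X$ contains a $(k-1)$-dimensional subspace $\tilde{Y}$ that is $L^2(\Omega)$-orthogonal to $\mathbb{R}\phi_1$, hence $\tilde{Y} \in \mathcal{Y}_{k-1}$. Since $\tilde{Y} \subseteq X$, the maximum of $\mathcal{R}$ over $\tilde{Y}$ does not exceed its maximum over $X$, and taking the infimum over $X$ yields $\tilde{\tau}_k(\Omega) \le \tau_k(\Omega)$.

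For the reverse inequality, I would take an arbitrary $Y \in \mathcal{Y}_{k-1}$ and set $X := Y \oplus \mathbb{R}\phi_1$, which is genuinely $k$-dimensional because $\phi_1 \notin Y$ (as $\phi_1$ is not $L^2$-orthogonal to itself), so $X \in \mathcal{X}_k$. For $u = w + t\phi_1$ with $w \in Y$, the $L^2$- and $\mathcal{Q}$-orthogonality of $\phi_1$ to $w$ give the splittings $\int_\Omega u^2\,dx = \int_\Omega w^2\,dx + t^2 \int_\Omega \phi_1^2\,dx$ and $\mathcal{Q}(u,u) = \mathcal{Q}(w,w) + t^2 \tau_1(\Omega) \int_\Omega \phi_1^2\,dx$, with no cross terms. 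Writing $M := \max_{w \in Y \setminus \{0\}} \mathcal{R}(w)$ and using $\tau_1(\Omega) \le M$ (since $\tau_1(\Omega)$ is the global minimum of $\mathcal{R}$ by \eqref{eq:tau1}) together with $\mathcal{Q}(w,w) \le M \int_\Omega w^2\,dx$, I would conclude $\mathcal{R}(u) \le M$ for every nonzero $u \in X$; as $Y \subseteq X$ the value $M$ is attained, so $\max_{u \in X} \mathcal{R}(u) = M$. Hence $\tau_k(\Omega) \le \max_{u \in X} \mathcal{R}(u) = \max_{w \in Y}\mathcal{R}(w)$, and taking the infimum over $Y$ gives $\tau_k(\Omega) \le \tilde{\tau}_k(\Omega)$.

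The only genuinely structural ingredient, and thus the main point to get right, is the $\mathcal{Q}$-orthogonality $\mathcal{Q}(\phi_1,w)=0$ on the $L^2$-complement of $\phi_1$, which is precisely what makes the Rayleigh quotient split without cross terms; everything else is bookkeeping with subspace dimensions. I would also note that the argument is uniform in $h \in \mathbb{R} \cup \{+\infty\}$: in the Dirichlet case $h = +\infty$ one simply works in the smaller space $\tilde{H}^1(\Omega)$ of \eqref{eq:sobolev} and reads $h \int_{\partial B_\alpha} u^2 \, dS$ as $0$, so that $\mathcal{Q}$ reduces to $\int_\Omega \nabla u \cdot \nabla w\,dx$, and the weak eigenvalue identity for $\phi_1$ still holds verbatim.
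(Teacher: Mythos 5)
Your proof is correct, and one half of it coincides with the paper's own argument: to show $\tau_k(\Omega)\le\tilde\tau_k(\Omega)$, the paper likewise takes an arbitrary $Y\in\mathcal{Y}_{k-1}$, forms $X=Y\oplus\mathbb{R}\phi_1$, kills the cross terms, and compares with $\tau_1(\Omega)$ (phrased there via the elementary equivalence $\frac{a+b}{d+e}\le\frac{b}{e}\iff\frac{a}{d}\le\frac{b}{e}$ rather than via your uniform bound $M$). Where you genuinely diverge is the reverse inequality $\tilde\tau_k(\Omega)\le\tau_k(\Omega)$: the paper exhibits one specific admissible pair, namely $X=\mathrm{span}\{\phi_1,\dots,\phi_k\}$ and $Y=\mathrm{span}\{\phi_2,\dots,\phi_k\}$, and relies on the standard spectral-theoretic fact that the maximum of the Rayleigh quotient over the span of the first $k$ eigenfunctions equals $\tau_k(\Omega)$; you instead intersect an \emph{arbitrary} $X\in\mathcal{X}_k$ with the kernel of the functional $u\mapsto\int_\Omega\phi_1 u\,dx$, which costs at most one dimension, and then minimize over $X$. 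Your route is more self-contained, using only the two minimax formulas \eqref{eq:tauk} and \eqref{eq:taukdef} and never the existence of a full $L^2$-orthogonal system of eigenfunctions realizing the minimax; the paper's route is shorter given that this machinery was already invoked in Section~\ref{sec:introduction}. A further small point in your favor: your key identity $\mathcal{Q}(\phi_1,w)=0$ for $w$ that is $L^2(\Omega)$-orthogonal to $\phi_1$ — i.e.\ the vanishing of the \emph{combined} cross term $\int_\Omega\langle\nabla\phi_1,\nabla w\rangle\,dx+h\int_{\partial B_\alpha}\phi_1 w\,dS$ — is exactly what the weak formulation yields and exactly what the computation needs; the paper instead asserts that the gradient term and the boundary term vanish \emph{separately}, a stronger claim that does not follow from $L^2$-orthogonality alone (and, fortunately, is never used beyond its sum).
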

\begin{proof}
	Fix any $k \geq 2$ and $Y\in\mathcal{Y}_{k-1}$. It is not hard to see that the direct sum $X := Y \oplus \mathbb{R} \phi_1 \in \mathcal{X}_k$. 
	Since any $v \in Y$ is $L^2(\Omega)$-orthogonal to $\phi_1$, we have, in view of \eqref{eq:D}, that 
	$$
	\int_{\partial B_\alpha} v\phi_1 \,dS = 0
	\quad \text{and} \quad
	\intO \langle \nabla v, \nabla \phi_1 \rangle_{\mathbb{R}^N} \,dx = 0.
	$$	
 Decomposing any $u \in X$ as $u=c\phi_1 + v$ with $c \in \mathbb{R}$ and $v \in Y$, we get
	\begin{align*}
\frac{\int_{\Omega} |\nabla u|^2 \, dx + h \int_{\partial B_\alpha} u^2 \,dS}{\int_{\Omega} u^2 \, dx}
&=  \frac{c^2 \tau_1(\Omega) \int_{\Omega} \phi_1^2 \,dx +  \int_{\Omega} |\nabla v|^2 \, dx + h \int_{\partial B_\alpha} v^2 \,dS}{c^2 \int_{\Omega} \phi_1^2 \, dx + \int_{\Omega} v^2 \, dx}
	\\
 &\leq \frac{ \int_{\Omega} |\nabla v|^2 \, dx + h \int_{\partial B_\alpha} v^2 \,dS}{\int_{\Omega} v^2 \, dx},
%	\leq
%	\tilde{\tau}_{k}(\Omega)
 \end{align*}
where the last inequality follows from the definition of $\tau_1(\Om)$ and the following simple equivalence:  
$$
\frac{a+b}{d +e} \le \frac{b}{e} \iff \frac{a}{d} \le \frac{b}{e} 
$$
for $a,b \in \mathbb{R}$, $d>0$, $e> 0$.
Therefore,
$$
\tau_k(\Omega)
\leq
\max_{u \in X\setminus \{0\}} \frac{\int_{\Omega} |\nabla u|^2 \, dx + h \int_{\partial B_\alpha} u^2 \,dS}{\int_{\Omega} u^2 \, dx}
	\leq
	\max_{u \in Y\setminus \{0\}} \frac{\int_{\Omega} |\nabla u|^2 \, dx + h \int_{\partial B_\alpha} u^2 \,dS}{\int_{\Omega} u^2 \, dx}
 $$
 Minimizing over $Y\in\mathcal{Y}_{k-1}$, we obtain $\tau_{k}(\Omega) \leq \tilde{\tau}_{k}(\Omega)$. 
	Conversely, taking $X = \text{span}\{\phi_1,\dots,\phi_k\}$, where each $\phi_j$ is the $j$-th eigenfunction of \eqref{eq:D} and $\phi_j$ is orthogonal to $\phi_i$ for $i \neq j$, we can decompose $X = Y \oplus \mathbb{R} \phi_1$, where $Y = \text{span}\{\phi_2,\dots,\phi_k\}$, and so $Y \in \mathcal{Y}_{k-1}$. This yields
	$$
	\tilde{\tau}_{k}(\Omega)
	\leq
	\max_{u \in Y\setminus \{0\}} \frac{\int_{\Omega} |\nabla u|^2 \, dx + h \int_{\partial B_\alpha} u^2 \,dS}{\int_{\Omega} u^2 \, dx}
	\leq
	\max_{u \in X\setminus \{0\}} \frac{\int_{\Omega} |\nabla u|^2 \, dx + h \int_{\partial B_\alpha} u^2 \,dS}{\int_{\Omega} u^2 \, dx}
	=
	\tau_{k}(\Omega).
	$$
	This completes the proof.
\end{proof}

\begin{remark}
It is clear that the result of Lemma~\ref{lem:tau=tautilde} remains valid under much more general assumptions on $\Omega$ and the boundary conditions. 
We refrain from formulating such a general statement aiming to keep the visual simplicity, the proof being standard anyway.
\end{remark}

	\smallskip
	\noindent
	\textbf{Acknowledgments.}
	{T.V.~Anoop was supported by the MATRCIS project (MTR/2022/000222) of  SERB, India.}
	V.~Bobkov was supported in the framework of the development program of the Scientific Educational Mathematical Center of the Volga Federal District (agreement No.\ 075-02-2023-950). 
	P.~Dr\'abek was supported by 
 the Grant Agency of the Czech Republic, grant No.~22-18261S.

\end{document}